\documentclass[11pt]{article}
\usepackage[margin=0.86in]{geometry}
\usepackage{amsmath,amssymb,amsthm,mathtools,mathrsfs}
\usepackage{booktabs,longtable,array}
\usepackage{enumitem}
\usepackage{microtype}
\usepackage{aliascnt}
\usepackage[colorlinks=true,linkcolor=blue,citecolor=blue,urlcolor=blue]{hyperref}
\usepackage[nameinlink,capitalise]{cleveref}
\allowdisplaybreaks

\newtheorem{theorem}{Theorem}[section]
\newaliascnt{lemma}{theorem}
\newtheorem{lemma}[lemma]{Lemma}
\aliascntresetthe{lemma}
\newaliascnt{proposition}{theorem}

\aliascntresetthe{proposition}
\newaliascnt{corollary}{theorem}
\newtheorem{corollary}[corollary]{Corollary}
\aliascntresetthe{corollary}
\newaliascnt{conjecture}{theorem}
\newtheorem{conjecture}[conjecture]{Conjecture}
\aliascntresetthe{conjecture}
\aliascntresetthe{corollary}
\newaliascnt{claim}{theorem}
\newtheorem{claim}[claim]{Claim}
\aliascntresetthe{claim}
\theoremstyle{definition}
\newaliascnt{definition}{theorem}

\aliascntresetthe{definition}
\newaliascnt{remark}{theorem}

\aliascntresetthe{remark}

\crefname{theorem}{Theorem}{Theorems}
\crefname{lemma}{Lemma}{Lemmas}
\crefname{proposition}{Proposition}{Propositions}
\crefname{corollary}{Corollary}{Corollaries}
\crefname{claim}{Claim}{Claims}
\crefname{definition}{Definition}{Definitions}
\crefname{remark}{Remark}{Remarks}

\newcommand{\N}{\mathbb N}
\newcommand{\R}{\mathbb R}
\newcommand{\cF}{\mathcal F}
\newcommand{\cC}{\mathcal C}

\newcommand{\cH}{\mathcal H}

\newcommand{\cO}{\mathcal O}
\newcommand{\ex}{\operatorname{ex}}
\newcommand{\circum}{\operatorname{circ}}
\newcommand{\ind}{\mathbf 1}
\newcommand{\fall}[2]{(#1)_{#2}}
\newcommand{\eps}{\varepsilon}
\newcommand{\defeq}{\mathrel{:=}}

\title{Counting Cycles in Graphs with Bounded Circumference}
\author{
Xiamiao Zhao \thanks{\small\raggedright Department of Mathematical Sciences, Tsinghua University, Beijing 100084, China. Email:
\small zxm23@mails.tsinghua.edu.cn}
\qquad
Yuanpei Wang \thanks{\small\raggedright Department of Mathematics, Shanghai University, Shanghai 200444, P.R. China. Email:
\small boyuan@shu.edu.cn}
}
\date{}

\begin{document}
\maketitle

\begin{abstract}
For an integer $L\ge2$, let $a=\lfloor L/2\rfloor$.  Let $H(n,L)$ be the join of $K_a$ and an independent set of order $n-a$, with one extra edge in the independent set when $L$ is odd.  We prove that, for fixed integers $q\ge4$ and $L>q$, and for all sufficiently large $n$, the graph $H(n,L)$ maximizes the number of copies of $C_q$ among all $n$-vertex graphs of circumference at most $L$.  This settles a conjecture of Zhu, Gy\H{o}ri, He, Lv, Salia and Xiao~[Bull. Lond. Math. Soc. 55 (2023)].  For even $q\ge6$, we also prove the boundary case $L=q$.  We further determine the corresponding maximum when a long path is forbidden.
\end{abstract}

\smallskip
\noindent\textbf{Keywords.} Generalized Tur\'an number; cycle; circumference; forbidden path.

\smallskip
\noindent\textbf{2020 Mathematics Subject Classification.} 05C35, 05C38.

\section{Introduction}\label{sec:introduction}

For graphs $T$ and $G$, let $N(T,G)$ be the number of unlabeled copies of $T$ in $G$.  For a family $\cF$ of graphs, define
\[
\ex(n,T,\cF)=\max\{N(T,G):|V(G)|=n\text{ and }G\text{ is }F\text{-free for every }F\in\cF\},
\]
and put $\ex(n,T,F)=\ex(n,T,\{F\})$.  Alon and Shikhelman~\cite{AlonShikhelman2016} studied this problem in a general form.  The classical Tur\'an problem is the case $T=K_2$.

The restrictions considered here come from the classical theory of long paths and cycles.  Erd\H{o}s and Gallai~\cite{ErdosGallai1959} determined the sharp edge bounds for graphs with no long path or no long cycle.  Kopylov~\cite{Kopylov1977} determined the sharp edge bound for two-connected graphs with no long cycle and gave the extremal constructions.  F\"uredi, Kostochka and Verstra\"ete~\cite{FurediKostochkaVerstraete2016} proved that a two-connected graph whose edge count is close to Kopylov's bound either has a long cycle or has a small vertex set whose deletion leaves a star forest.  F\"uredi, Kostochka, Luo and Verstra\"ete~\cite{FurediKostochkaLuoVerstraete2018} completed this stability result by describing the dense two-connected graphs with bounded circumference in terms of Kopylov-type graphs and several exceptional families.  Ma and Ning~\cite{MaNing2020} added a minimum-degree condition and described the corresponding dense two-connected graphs with bounded circumference.  Li and Ning~\cite{LiNing2021} proved that a degree condition on at least half of the vertices guarantees a long path between two fixed vertices.

The first generalized Tur\'an results under a circumference restriction concerned complete graphs.  Luo~\cite{Luo2018} extended Kopylov's theorem from edges to fixed cliques and showed that the Kopylov constructions continue to be extremal.  Lu, Yuan and Zhang~\cite{LuYuanZhang2021} determined the corresponding two-connected problem for complete bipartite graphs.  Lv, Gy\H{o}ri, He, Salia, Xiao and Zhu~\cite{LvEtAl2024} proved the sharp clique bound under an odd circumference restriction.  For forbidden long paths, Gy\H{o}ri, Salia, Tompkins and Zamora~\cite{GyoriSaliaTompkinsZamora2019} obtained asymptotically sharp estimates for several path and cycle targets.

We use the stability theorem of Zhu, Gy\H{o}ri, He, Lv, Salia and Xiao~\cite{ZhuEtAl2023}.  They described two-connected graphs with minimum degree $k$ and circumference at most $2k+1$, and used this result to determine the exact large-order values for $C_4$ and $C_5$.  They proposed the following conjecture.
\begin{conjecture}
\textup{\cite{ZhuEtAl2023}} For fixed integers $q\ge4$ and $L>q$, and all sufficiently large $n$,
\[
\ex\bigl(n,C_q,\cC_{\ge L+1}\bigr)
=N\bigl(C_q,H(n,L)\bigr).
\]
\end{conjecture}
We prove the conjecture for every remaining cycle length.  We also prove the boundary case $L=q$ when $q\ge6$ is even.

For $L\ge2$, put $a=\lfloor L/2\rfloor$ and define
\begin{equation}\label{eq:def-H}
H(n,L)=
\begin{cases}
K_a\vee\overline K_{n-a},&L=2a,\\
K_a\vee\bigl(\overline K_{n-a}+e\bigr),&L=2a+1,
\end{cases}
\end{equation}
where $e$ is one fixed edge in the independent side.  The graph in \eqref{eq:def-H} has circumference at most $L$: a cycle contains at most as many independent-side vertices as clique vertices when $L=2a$, while the extra edge allows at most one additional independent-side vertex when $L=2a+1$.

Our main result is the following.

\begin{theorem}\label{thm:main}
Fix integers $q\ge4$ and $L>q$.  There exists $n_0=n_0(q,L)$ such that, for every $n\ge n_0$,
\[
\ex\bigl(n,C_q,\cC_{\ge L+1}\bigr)=N\bigl(C_q,H(n,L)\bigr).
\]
Moreover, the same equality holds for $q\ge6$ even and $L=q$.
\end{theorem}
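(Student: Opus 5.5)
We plan to prove \cref{thm:main} in three steps: a standard reduction to two-connected blocks, stability through the Zhu--Gy\H{o}ri--He--Lv--Salia--Xiao structure theorem, and an exact comparison in the near-extremal case. Write $a=\lfloor L/2\rfloor$ and let $G$ be an $n$-vertex graph with circumference at most $L$ that maximizes $N(C_q,G)$. First, $N(C_q,H(n,L))=\Theta(n^{\lfloor q/2\rfloor})$. The main term counts cycles that alternate between the clique $K_a$ and the independent side. For even $q$ these cycles use $q/2$ vertices from each side, and $q/2\le a$ because $L\ge q$. For odd $q$ the main term comes from cycles that use exactly one clique edge, or the extra edge when that helps, together with $\lfloor q/2\rfloor$ independent vertices.

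\textbf{Reduction.} Every cycle lies inside a single block, so $N(C_q,G)=\sum_B N(C_q,B)$ over the two-connected blocks $B$. We will show that the function $f(m)=\max\{N(C_q,B): B \text{ two-connected on } m \text{ vertices, circumference}\le L\}$ satisfies $f(m)\le c\,m^{\lfloor q/2\rfloor}$. Since $\sum_B(|B|-1)\le n-1$ and $x\mapsto x^{\lfloor q/2\rfloor}$ is superadditive for exponent at least $2$, the maximum is attained when one block has essentially all $n$ vertices. Blocks of size $O(1)$ contribute only $O(n)$ in total, and this is negligible unless $\lfloor q/2\rfloor=1$, which is excluded because $q\ge4$. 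In the same way, a vertex of degree less than $a$ can be deleted and replaced by a new vertex joined to the core; the Kopylov-type parameter counting shows that this does not decrease the number of $C_q$'s. So we may assume that $G$ is two-connected with minimum degree at least $a$ after an $o(n)$ cleanup. The cleaned graph must still be compared with $G$; we will handle this by the iterative deletion argument of Kopylov and Luo, in which each deleted low-degree vertex loses at most $O(n^{\lfloor q/2\rfloor-1})$ cycles of length $q$. In $H(n,L)$, by contrast, every independent vertex lies in $\Theta(n^{\lfloor q/2\rfloor-1})$ such cycles with the larger constant $\binom{a-1}{\cdot}$-type.

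\textbf{Stability.} Next we apply the structure theorem of~\cite{ZhuEtAl2023} (for general $L$, the F\"uredi--Kostochka--Luo--Verstra\"ete description) to a two-connected graph with minimum degree $k$ and circumference at most $2k+1$. It gives either a set $A$ of at most $a$ vertices such that $G-A$ is a star forest (a matching-like remainder when $L$ is odd), or one of a bounded list of exceptional families. The exceptional families have at most $a-1$ high-degree vertices and therefore only $O(n^{\lfloor q/2\rfloor-1})$ copies of $C_q$, apart from families whose counting polynomial has a strictly smaller leading coefficient. In the main case, a copy of $C_q$ meets $G-A$ in paths with at most two vertices, because components of $G-A$ are stars with no long paths, and for $L$ odd at most one such path has length $1$. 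Counting the ways to choose these segments together with their attachments in $A$ shows that $N(C_q,G)\le N(C_q,K_{|A|}\vee(\text{same remainder}))$. We then maximize over star forests subject to the circumference bound: an extra edge outside $A$ in a star, or a second edge, would create a cycle of length $L+1$ through the $a$ vertices of $A$ once these have $\Theta(n)$ common neighbours. Hence the remainder is independent when $L$ is even and has at most one edge when $L$ is odd, which gives exactly $H(n,L)$.

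\textbf{Main obstacle.} The hardest step is the exact comparison at lower order. Stability yields $N(C_q,G)\le N(C_q,H(n,L))+o(n^{\lfloor q/2\rfloor})$, but we need exact equality, so we must show that every deviation (a missing edge inside $A$, $|A|<a$, a few vertices of large degree outside $A$, leftover small blocks) costs more than any gain. We would attempt this by a vertex-by-vertex local argument. Each vertex of $A$ has degree $n-O(1)$; otherwise some $\Omega(n)$ independent vertices lose $\Omega(n^{\lfloor q/2\rfloor-1})$ cycles. Each vertex outside $A$ then has neighbourhood essentially $A$, and adding the missing edges within $A$ keeps the circumference at most $L$. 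For the boundary case $L=q$ with $q$ even, we have $q/2=a$, so the alternating cycles use all of $A$, and the odd-$q$ chord issue does not arise. This is why only even $q\ge6$ is covered there; for $q=4$ the exponent is $2$ and the competitor families tie.
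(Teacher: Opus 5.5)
Your proposal has a genuine gap exactly where the paper does its main new work: the low-degree deletion step. Iterated deletion of a vertex $v$ with $d(v)\le a-1$ preserves the \emph{exact} bound only if $N(C_q,G)-N(C_q,G-v)\le\Phi(n)-\Phi(n-1)$, where $\Phi(n)=N(C_q,H(n,L))$. In fact you need a slack $\gamma n^{\lfloor q/2\rfloor-1}$ to absorb lower-order errors and to build a reserve for the bounded-order endgame. Knowing that $v$ lies in $O(n^{\lfloor q/2\rfloor-1})$ copies of $C_q$ gives nothing, because the increment of $\Phi$ has the same order: $s\binom a2\fall{a-2}{s-1}n^{s-1}$ for $q=2s+1$ and $\binom a2\fall{a-2}{s-2}n^{s-1}$ for $q=2s$. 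You assert the comparison of leading constants, but there is no ready-made ``Kopylov-type parameter counting'' that proves it. The copies of $C_q$ through $v$ are the paths of length $q-2$ in $J=G-v$ with both ends in $U=N(v)$, where $|U|\le a-1$ and $J$ has no $U$--$U$ path of length at least $L-1$. Since $\binom{a-1}2\frac a{a-2}=\binom a2$, you need, for each pair in $U$, a bound on such paths strictly below $\frac a{a-2}$ times the per-pair count in $H(n,L)$. For even $q$ the paper gets this from \cref{lem:even-rooted-paths}, a neighbourhood-hypergraph argument with degree truncation. For odd $q$ it is much harder. With $h=s-1$ and $r=a-2$, the per-pair bound the paper proves for odd $L$ is $(h+1)\fall rh+\mu_{a,s}\fall r{h-1}$, not $(h+1)\fall rh$. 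The extra term comes from paths that use a length-three connector through two outside vertices. It is absorbed only because the available room per pair is $2(h+1)\fall{r-1}{h-1}$ (\cref{lem:strict-slack}, with $s=3$ treated separately). Proving this needs the kernel normal form, the connector encoding, and the Frieze--McDiarmid--Reed rooted-path lemma. Replacing $v$ by a new vertex joined to the core needs the same inequality, and it may also create a cycle longer than $L$.

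Two other steps fail as written. First, nothing guarantees that the cleanup removes only $o(n)$ vertices, and deletions can destroy two-connectivity, so you cannot simply assume that the cleaned graph is two-connected with $\delta\ge a$. The paper telescopes the deletions all the way down (\cref{lem:telescoping}) and also removes bounded terminal blocks. It then shows, using Menger's theorem and the Li--Ning lemma (\cref{lem:terminal-structure}), that two large terminal blocks would give a cycle of length at least $2a+2$ in the original two-connected graph. Second, you underuse the structure theorem. Once the peeled graph is two-connected with $\delta\ge a$, Dirac's theorem forces $\delta=a$. Then \cref{thm:stability} gives $G\subseteq H(n,2a)$ or $G\subseteq H(n,2a+1)$ \emph{exactly}, apart from (E1)--(E4), which are counted directly; for even $L$ one adds \cref{lem:even-model-reduction}. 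So no lower-order exact comparison is needed. Your star-forest route instead passes to $K_{|A|}\vee(\text{remainder})$, which need not satisfy the circumference bound, and it leaves the decisive vertex-by-vertex argument unexecuted. Finally, $q=4,5$ lie outside this machinery, since the estimates need $s\ge3$; the paper takes these cases from Zhu et al.
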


The odd boundary case remains open.  We also prove a result for the $(p+1)$-vertex path $P_{p+1}$.

\begin{theorem}\label{thm:path-main}
Fix integers $q\ge6$ and $p\ge1$.
\begin{enumerate}[label=(\roman*),leftmargin=2.4em]
\item If $p\le q-1$, then $\ex(n,C_q,P_{p+1})=0$ for every $n$.
\item If $p=q$, then
\[
\ex(n,C_q,P_{q+1})
=\left\lfloor\frac{n}{q}\right\rfloor\frac{(q-1)!}{2}
\]
for every $n$.
\item If $p\ge q+2$, or if $q$ is even and $p=q+1$, then, for all sufficiently large $n$,
\[
\ex(n,C_q,P_{p+1})=N\bigl(C_q,H(n,p-1)\bigr).
\]
\end{enumerate}
\end{theorem}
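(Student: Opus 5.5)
The plan is to derive \cref{thm:path-main} from \cref{thm:main} together with a reduction from paths to cycles based on connected components. Part (i) is immediate: every cycle $C_q$ contains a path on $q$ vertices, and hence a $P_{p+1}$ whenever $p+1\le q$. So a $P_{p+1}$-free graph contains no $C_q$, and the maximum is $0$.

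For part (ii), observe that a connected $P_{q+1}$-free graph containing a cycle $C_q$ must have exactly $q$ vertices. Otherwise some vertex outside the cycle is adjacent to a cycle vertex, and following the cycle from that vertex produces a path on $q+1$ vertices. Consequently every component that contains a $C_q$ has exactly $q$ vertices, and it contains at most $N(C_q,K_q)=(q-1)!/2$ copies of $C_q$. The number of such components is at most $\lfloor n/q\rfloor$, and taking disjoint copies of $K_q$ attains the bound.

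For part (iii), fix a $P_{p+1}$-free graph $G$ and work one component at a time. A connected $P_{p+1}$-free graph has circumference at most $p$. The plan is to show that components of bounded order contribute at most $O(1)$ copies of $C_q$ per vertex, while a large component with circumference at most $p-1$ falls under \cref{thm:main} with $L=p-1$. This requires $p-1>q$, or $p-1=q$ with $q$ even, which is exactly the hypothesis of (iii). The key structural step is to show that a large connected $P_{p+1}$-free graph whose number of $C_q$'s is near the extremal value cannot contain a cycle of length $p$. The reason is that a component containing a $C_p$ and a vertex off that cycle would yield a path on $p+1$ vertices, so any component with a $p$-cycle has exactly $p$ vertices and is bounded. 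Large components therefore have circumference at most $p-1$, and \cref{thm:main} bounds each of them.

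The remaining task is to combine components. Since $N(C_q,H(m,p-1))$ grows like $c\,m^{\lfloor (p-1)/2\rfloor\cdot\ldots}$, with a polynomial of degree at least $q/2\ge3$ in $m$ when $p-1\ge q$, the function is superadditive for large $m$. Small components each contribute $O(m)$ copies. Hence the total is maximized by a single giant component, which gives the bound $N(C_q,H(n,p-1))$ for large $n$. The matching lower bound holds because $H(n,p-1)$ is $P_{p+1}$-free: any path alternates through the $a$ clique vertices and uses at most $2a+1$ (respectively $2a+2$ when the extra edge is present) vertices, which is at most $p$. I expect the main obstacle to be the uniformity issue in this step. \cref{thm:main} holds only for $m\ge n_0$, so components of intermediate size need a crude bound. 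For this I would use an Erd\H{o}s--Gallai type estimate of the form $N(C_q,G)=O(m^{\lfloor (p-1)/2\rfloor})$ for such a component, together with a convexity argument showing that splitting the vertices into several components strictly loses to one component of size $n-O(1)$.
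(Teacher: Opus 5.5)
Your proposal is correct and follows the paper's proof essentially step by step. Parts (i) and (ii) are argued the same way. For (iii), the paper likewise shows (\cref{lem:path-to-cycle}) that every connected $P_{p+1}$-free graph with more than $p$ vertices has circumference at most $p-1$. It then applies \cref{thm:main} with $L=p-1$ to components above its threshold, charges the remaining components linearly, and combines everything with the superadditivity lemma (\cref{lem:components}).

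The ``intermediate size'' obstacle you anticipate does not arise. The threshold in \cref{thm:main} is a constant, so components below it have bounded order and no Erd\H{o}s--Gallai-type estimate is needed. Also, the degree of $N(C_q,H(m,p-1))$ in $m$ is $\lfloor q/2\rfloor\ge3$ and does not depend on $p$; your ``degree at least $q/2$'' is slightly off for odd $q$ but harmless.
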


\subsection{Proof sketch and organization}

The main new ingredient is a family of fixed-endpoint path estimates.  The stability theorem quoted above applies only after the low-degree vertices have been removed.  To make this reduction sharp, we prove that deleting a vertex of degree at most $a-1$ destroys strictly fewer copies of the target cycle than the corresponding one-vertex increment in $H(n,L)$, at the leading order.  For odd cycles, this requires a finite-kernel reduction and an encoding of the remaining rooted paths by length-two and length-three connectors.  The resulting strict gap is then preserved through the peeling and block-decomposition arguments.

For a two-connected host graph, we repeatedly delete low-degree vertices and bounded terminal blocks.  A telescoping estimate controls the total number of cycles removed by this peeling process.  When the process stops at a large graph of minimum degree at least $a$, the stability theorem above gives the required bound.  We then use the block and component decompositions to pass from two-connected graphs to arbitrary graphs.

\Cref{sec:prelim} contains the counting formulas and the basic path lemmas.  In \cref{sec:mixed} we first treat even cycles, then develop the general reduction and the three fixed-endpoint estimates for odd cycles, and finally prove the low-degree deletion inequalities.  In \cref{sec:terminal} we treat graphs of high minimum degree, carry out the peeling argument, and prove \cref{thm:main}.  Finally, \cref{sec:forbidden-paths} deduces \cref{thm:path-main}.

\section{Preliminaries and notation}\label{sec:prelim}

This section fixes the notation and gives the tools used later.  We first count cycles in the extremal construction and then prove the basic rooted-path and structural lemmas.

All graphs are finite and simple, and all paths and cycles are simple.  The length of a path or cycle is its number of edges.  The symbol $\sqcup$ denotes disjoint union.  We write $V(G)$ and $E(G)$ for the vertex and edge sets, $v(G)=|V(G)|$, $e(G)=|E(G)|$, $\delta(G)$ for the minimum degree, and $\circum(G)$ for the circumference.  We put
\[
\cC_{\ge L}=\{C_\ell:\ell\ge L\}.
\]
For $x\in\R$ and $k\in\N$, define
\[
\fall{x}{k}=x(x-1)\cdots(x-k+1),\qquad \fall{x}{0}=1.
\]
If $x$ is a nonnegative integer and $k>x$, then $\fall{x}{k}=0$.  For a graph $Q$ and an integer $b\ge0$, the notation $bQ$ denotes the disjoint union of $b$ copies of $Q$.  A block is a maximal subgraph with no cutvertex.  As usual, bridges and isolated vertices are allowed as trivial blocks.  A \emph{terminal block} of a graph $G$ is a block containing at most one cutvertex of its connected component.  If it contains one such cutvertex, we denote it by $c(B)$.  If it contains none, then $B$ is a connected component of $G$.  Constants hidden in $O_{\alpha_1,\ldots,\alpha_t}(\cdot)$ may depend only on the displayed fixed parameters.

For a set $X$ and an integer $k\ge0$, put
\[
X_{\ne}^{k}=\{(x_1,\ldots,x_k)\in X^k:x_i\ne x_j\text{ whenever }i\ne j\}.
\]
For distinct $p,q\in V(G)$, let $P_\ell(G;p,q)$ be the number of simple $p$--$q$ paths of length $\ell$.  Thus it is the number of tuples $(v_1,\ldots,v_{\ell-1})\in(V(G)\setminus\{p,q\})_{\ne}^{\ell-1}$ for which $pv_1\cdots v_{\ell-1}q$ is a path.

\subsection{The extremal construction}

Fix $\kappa\in\{0,1\}$ and put $F_{a,s}^{(\kappa)}(n)=N(C_{2s+1},H(n,2a+\kappa))$.

\begin{lemma}\label{lem:candidate-count}
For fixed $a\ge s+1$ and $\kappa\in\{0,1\}$,
\begin{equation}\label{eq:F-asymp}
F_{a,s}^{(\kappa)}(n)=\frac{\fall a{s+1}}2n^s+O_{a,s}(n^{s-1}),
\end{equation}
and
\begin{equation}\label{eq:F-diff}
F_{a,s}^{(\kappa)}(n)-F_{a,s}^{(\kappa)}(n-1)
=s\binom a2\fall{a-2}{s-1}n^{s-1}+O_{a,s}(n^{s-2}).
\end{equation}
\end{lemma}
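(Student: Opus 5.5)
We count copies of $C_{2s+1}$ in $H=H(n,2a+\kappa)$ by the number $j$ of independent-side vertices they use. Write $A$ for the clique part ($|A|=a$) and $I$ for the independent part ($|I|=n-a$).

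\emph{Cycles without the extra edge.} Independent-side vertices are pairwise nonadjacent, except across the extra edge $e$ when $\kappa=1$. So a cycle not using $e$ has its $j$ vertices of $I$ separated by vertices of $A$, which forces $j\le a$, and since the cycle has length $2s+1$ it also forces $j\le s$.

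The $j=s$ term is the dominant one. Here the cycle uses $s$ vertices of $I$ and $s+1$ vertices of $A$. Cyclically, the $I$-vertices are isolated from each other, so exactly one pair of consecutive $A$-vertices occurs (an edge of the clique), and the rest alternates. We count these cycles as follows.
\begin{itemize}
\item Choose the $s$-set $S\subseteq I$, in $\binom{n-a}{s}$ ways.
\item Choose an ordered arrangement of $s+1$ distinct clique vertices together with the cyclic interleaving. Concretely, a cycle is a cyclic sequence $x_0x_1y_1x_2y_2\cdots x_sy_s$ with $x_i\in A$ and $y_i\in S$, where $x_0x_1$ is the clique edge.
\item Fix the position of the unique $AA$ edge. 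Orientations then give $\fall a{s+1}\,s!$ labelled sequences, and reversal identifies these in pairs, giving $\fall a{s+1}s!/2$ cycles.
\end{itemize}
Multiplying by $\binom{n-a}{s}=n^s/s!+O(n^{s-1})$ gives the leading term $\fall a{s+1}n^s/2$.

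All terms with $j\le s-1$ contribute $O_{a,s}(n^{s-1})$, since the $I$-part is chosen from $\binom{n-a}{j}$ sets and the rest is bounded. When $\kappa=1$, cycles through the two endpoints of $e$ use at most $s-1$ further vertices of $I$, so they also contribute $O(n^{s-1})$. This proves \eqref{eq:F-asymp}.

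\emph{Difference.} For \eqref{eq:F-diff}, the cleanest route is exact: $F(n)$ is a polynomial in $n$ of degree $s$ for $n$ large. We write
\[
F(n)=\sum_{j} c_j\binom{n-a}{j}+\text{(terms with a fixed polynomial structure in $n$ from cycles through $e$)},
\]
where the $c_j$ depend only on $a,s,\kappa$. The differences then satisfy
\[
\binom{n-a}{j}-\binom{n-1-a}{j}=\binom{n-1-a}{j-1}.
\]
The top term $c_s\binom{n-a}{s}$, with $c_s=\fall a{s+1}s!/2$, therefore contributes $\frac{\fall a{s+1}s!}{2}\cdot\frac{n^{s-1}}{(s-1)!}+O(n^{s-2})$, which equals $\frac{s\,\fall a{s+1}}{2}n^{s-1}+O(n^{s-2})$.

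Next we compare this with the stated coefficient, using $\binom a2\fall{a-2}{s-1}=\frac{a(a-1)}2\fall{a-2}{s-1}=\frac12\fall a{s+1}$. The stated coefficient is thus $s\,\fall a{s+1}/2$, matching. The remaining terms have degree at most $s-1$ in $n$, so their differences are $O(n^{s-2})$.

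The cycles using $e$ need a short justification: they are counted by $\binom{n-a-2}{j}$ times constants, with $j\le s-1$. Equivalently, one can count directly the cycles through a fixed vertex $v\in I$ not on $e$, namely $s\binom a2\fall{a-2}{s-1}\binom{n}{s-1}(s-1)!/\ldots$. The top-order bookkeeping above is enough.

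The only real care needed is getting the constant $c_s$ right, namely the position of the single $AA$ edge and the factor $1/2$ for reversal. After that, the degree argument makes the error terms automatic.
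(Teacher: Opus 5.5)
Your proposal is correct and follows essentially the same route as the paper. The dominant cycles are those with $s$ independent-side vertices and a single clique edge, counted as $\fall a{s+1}\fall{n-a}{s}/2$; your $\binom{n-a}{s}\cdot\fall a{s+1}s!/2$ is the same number. Every other cycle, including those through the extra edge, has at most $s-1$ freely chosen vertices in $I$. The difference formula then follows because $F$ is a polynomial whose lower-order part has degree at most $s-1$, together with $\fall a{s+1}/2=\binom a2\fall{a-2}{s-1}$.

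The only blemish is the garbled aside beginning ``Equivalently, one can count directly\ldots'', which is not needed and should be deleted.
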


\begin{proof}
Write $V(H(n,2a+\kappa))=A\sqcup I$, where $|A|=a$ and $|I|=n-a$.  The terms of degree $s$ are the cycles with $s$ vertices in $I$ and $s+1$ vertices in $A$ that do not use the fixed edge in $I$.  Such a cycle has a unique edge with both endpoints in $A$.  Starting from this edge and following the cycle in one of its two directions gives an ordered alternating sequence of the remaining vertices.  Hence the number of these cycles is
\[
\frac{\fall a{s+1}\fall{n-a}s}{2}
=\frac{\fall a{s+1}}2n^s+O_{a,s}(n^{s-1}).
\]
A cycle not counted in this term has at most $s-1$ freely chosen vertices in $I$.  This includes a cycle using the fixed edge in $I$, because the two endpoints of that edge are fixed.  The total number of these remaining cycles is therefore a polynomial in $n$ of degree at most $s-1$.  This proves \eqref{eq:F-asymp}.

The discrete difference of the displayed leading term has leading coefficient $s\fall a{s+1}/2$, while the discrete difference of every remaining term has degree at most $s-2$.  Since $\fall a{s+1}/2=\binom a2\fall{a-2}{s-1}$, we obtain \eqref{eq:F-diff}.
\end{proof}

For even cycles, put $E_{a,s}^{(\kappa)}(n)=N(C_{2s},H(n,2a+\kappa))$.

\begin{lemma}\label{lem:even-candidate-count}
For fixed $a\ge s\ge3$ and $\kappa\in\{0,1\}$,
\begin{align}
E_{a,s}^{(\kappa)}(n)&=\frac{\fall a s}{2s}n^s+O_{a,s}(n^{s-1}),\label{eq:E-asymp}\\
E_{a,s}^{(\kappa)}(n)-E_{a,s}^{(\kappa)}(n-1)
&=\binom a2\fall{a-2}{s-2}n^{s-1}+O_{a,s}(n^{s-2}).\label{eq:E-diff}
\end{align}
\end{lemma}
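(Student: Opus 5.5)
We mirror the proof of \cref{lem:candidate-count}. Write $V(H(n,2a+\kappa))=A\sqcup I$ with $|A|=a$ and $|I|=n-a$. Since $I$ is independent apart from possibly the single edge $e$, a $C_{2s}$ that avoids $e$ can have no two consecutive vertices in $I$. Hence it has at most $s$ vertices in $I$, and exactly $s$ only if it alternates strictly between $A$ and $I$. We therefore split the copies of $C_{2s}$ into two classes: the alternating ones with exactly $s$ vertices in $I$ (these are the main term), and all remaining ones.

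\emph{Main term.} An alternating $2s$-cycle is determined by two cyclic orderings, one of $s$ distinct vertices of $A$ and one of $s$ distinct vertices of $I$, interleaved. Count ordered tuples $(x_1,y_1,\ldots,x_s,y_s)$ with $x_i\in A$ distinct and $y_i\in I$ distinct; there are $\fall as\fall{n-a}s$ of them. Each cycle arises from exactly $2s$ tuples: we choose which of the $s$ $A$-vertices plays the role of $x_1$, and one of the $2$ directions. Starting at an $A$-vertex fixes the parity, so no further factor appears. Also $s\ge3$, hence $2s\ge6$, so every such tuple really is a simple cycle, with no degenerate $C_4$-type coincidences. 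The count is thus
\[
\frac{\fall as\fall{n-a}s}{2s}=\frac{\fall as}{2s}n^s+O_{a,s}(n^{s-1}),
\]
which needs $a\ge s$ to be nonzero.

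\emph{Remaining cycles.} Any other $C_{2s}$ either avoids $e$ and has at most $s-1$ vertices in $I$, or uses $e$. In the second case, the two fixed endpoints of $e$ are not free. The $I$-vertices of the cycle, other than the endpoints of $e$, are pairwise nonadjacent and not adjacent to the endpoints of $e$ along the cycle. So on the cycle of length $2s$ they number at most $s-1$ free vertices. In either case, once the pattern of positions and the $A$-vertices are fixed (a bounded number of choices), the number of cycles is a polynomial in $n$ of degree at most $s-1$, namely a falling factorial in $n-a$ or $n-a-2$. Summing over the finitely many patterns gives a polynomial $R(n)$ of degree at most $s-1$. This proves \eqref{eq:E-asymp}.

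\emph{Difference.} The whole count $E^{(\kappa)}_{a,s}(n)$ is a polynomial in $n$ for $n$ large. Its discrete difference is
\[
\frac{\fall as}{2s}\bigl(\fall{n-a}s-\fall{n-a-1}s\bigr)+\bigl(R(n)-R(n-1)\bigr).
\]
The first part equals $\frac{\fall as}{2s}\,s\,\fall{n-a-1}{s-1}=\frac{\fall as}{2}n^{s-1}+O(n^{s-2})$, and the second part has degree at most $s-2$. Finally,
\[
\frac{\fall as}{2}=\frac{a(a-1)}{2}\fall{a-2}{s-2}=\binom a2\fall{a-2}{s-2},
\]
which gives \eqref{eq:E-diff}.

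The only delicate point is the bound on cycles through $e$. One must check that such a cycle has at most $s-1$ free $I$-vertices, which holds because the edge $e$ uses two consecutive positions with both vertices fixed. One must also confirm that $R$ is genuinely a polynomial, not merely $O(n^{s-1})$, so that its difference drops a degree. This follows because each pattern contributes a product of falling factorials in $n$.
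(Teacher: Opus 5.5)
Your proposal is correct and follows essentially the same route as the paper. It counts the alternating cycles as $\fall as\fall{n-a}s/(2s)$, bounds the remaining cycles (those avoiding $e$ with at most $s-1$ vertices in $I$, and those through $e$) by a polynomial of degree at most $s-1$ built from falling factorials, and reads the difference off the main term; the paper's sharper bound of $s-2$ free $I$-vertices for cycles through $e$ is not needed, and your $s-1$ suffices.
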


\begin{proof}
Write $V(H(n,2a+\kappa))=A\sqcup I$, where $|A|=a$ and $|I|=n-a$.  The terms of degree $s$ are the alternating cycles with $s$ vertices in each side.  Choose and cyclically order the clique vertices and then insert the $s$ independent vertices into the $s$ gaps.  Equivalently, there are $\fall a s\fall{n-a}s$ oriented choices, and every unoriented cycle is counted $2s$ times.  Thus their number is
\[
\frac{\fall a s\fall{n-a}s}{2s}
=\frac{\fall a s}{2s}n^s+O_{a,s}(n^{s-1}).
\]
A nonalternating cycle that does not use the fixed edge in $I$ has at most $s-1$ vertices in $I$.  A cycle using that fixed edge has at most $s-2$ other vertices in $I$, so it has at most $s-2$ freely chosen vertices in $I$.  For each possible intersection with $A$ and each cyclic order, the remaining choices are falling factorials in $|I|$.  Hence the number of all nonalternating cycles is a polynomial in $n$ of degree at most $s-1$.  This proves \eqref{eq:E-asymp}.  Its discrete difference has degree at most $s-2$, while the discrete difference of the displayed alternating term has leading coefficient
\[
\frac{\fall a s}{2}
=\binom a2\fall{a-2}{s-2}.
\]
This proves \eqref{eq:E-diff}.
\end{proof}

Set throughout
\begin{equation}\label{eq:h-r-d}
h=s-1,\qquad r=a-2,\qquad d=a-s=r-h+1.
\end{equation}
These parameters are used throughout to shorten the formulas.

\subsection{Weighted and rooted paths}

We use the following weighted path lemma.
\begin{lemma}[Frieze, McDiarmid and Reed~\cite{FMR1992}]\label{lem:FMR}
Let $Q$ be a graph on $m\ge1$ vertices and let $\omega:E(Q)\to\R_{\ge0}$.  Put $\omega(Q)=\sum_{e\in E(Q)}\omega(e)$.  Then $Q$ contains a path $P$ satisfying
\[
\omega(P)\ge\frac{2\omega(Q)}m.
\]
\end{lemma}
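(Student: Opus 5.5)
The approach I would take is the classical longest-path/rotation argument of Frieze, McDiarmid and Reed, written for edge weights. First I reduce to connected $Q$. If the components are $Q_1,\dots,Q_t$ with $m_i$ vertices, then $\omega(Q)/m=\sum_i\omega(Q_i)/\sum_i m_i$ is a mediant, so some component satisfies $\omega(Q_i)/m_i\ge\omega(Q)/m$. It therefore suffices to treat a connected graph. Edges of weight zero are harmless, so I may also assume $\omega>0$ on every edge, and treat zero-weight edges as absent after the reduction if necessary. The case $m=1$ is trivial, since then $\omega(Q)=0$.

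For connected $Q$ I would argue by induction on $m$. Choose a path $P=v_1v_2\cdots v_k$ of maximum weight and, subject to that, of maximum length. Maximality forces every neighbour of $v_1$ and of $v_k$ to lie on $P$; otherwise $P$ could be extended without losing weight. For an edge $v_1v_j$ with $j\ge 3$, the rotation $v_{j-1}v_{j-2}\cdots v_1v_jv_{j+1}\cdots v_k$ is again a path. Its weight is $\omega(P)-\omega(v_{j-1}v_j)+\omega(v_1v_j)$, so $\omega(v_1v_j)\le\omega(v_{j-1}v_j)$. Hence the edges at $v_1$ are dominated by distinct path edges, giving $\sum_{u\sim v_1}\omega(v_1u)\le\omega(P)$, and the same bound holds at $v_k$.

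Next I want to show that the total weight is at most $\omega(P)$ times roughly $m/2$. One route is induction by deleting $v_1$: if $\omega(v_1)\le 2\omega(P)/m$, where $\omega(v)$ denotes weighted degree, the induction hypothesis applies to $Q-v_1$. I expect this to be the main obstacle, because the bound $\omega(v_1)\le\omega(P)$ from the rotation step is much weaker than $2\omega(P)/m$. I would therefore replace simple deletion by the standard trick: consider the closure of all endpoints reachable by iterated rotations (Pósa), and use the fact that every such endpoint has all of its weighted neighbourhood dominated by $P$-edges.

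The alternative I would pursue is the original Frieze--McDiarmid--Reed approach, via a weighted Erdős--Gallai cycle bound. Add a new vertex $z$ joined to all vertices by edges of weight $0$; a path in $Q$ then corresponds to a cycle through $z$. I would then prove that a $2$-connected weighted graph on $m$ vertices contains a cycle of weight at least $2\omega/m$ times the appropriate count, using the Bondy-type argument with the rotation inequality at both ends and summing via the crossing-edge pigeonhole. If the paper's later use only needs the statement as quoted, I would simply cite~\cite{FMR1992}, noting that it is exactly their theorem.
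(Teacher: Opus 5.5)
\emph{Verdict.} Your closing fallback matches the paper, but your attempted self-contained proof has a genuine gap. The paper does not prove this lemma: it imports it from \cite{FMR1992}. Citing it, as you finally suggest, is exactly the paper's treatment and is all the later arguments need.

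\textbf{First route (deletion induction).} Your sketch stops before the step that carries the content, and one claim in it fails. The preliminaries are fine: the mediant reduction to connected $Q$, and the rotation inequality $\omega(v_1v_j)\le\omega(v_{j-1}v_j)$, which gives $\omega(v_1)\le\omega(P)$ for an end of a heaviest-then-longest path.
\begin{itemize}
\item For the induction, the natural threshold is a vertex $v$ with $\omega(v)\le\omega(P)/2$. Then induction on $Q-v$ gives $(m-1)\omega(P)\ge2\omega(Q)-2\omega(v)\ge2\omega(Q)-\omega(P)$.
\item No such vertex need exist, even when the bound is tight. In $K_m$ with unit weights, $\omega(P)=m-1=2\omega(Q)/m$, while every weighted degree is $m-1$. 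So a proof needs a terminal case in which $\omega(Q)\le m\,\omega(P)/2$ is proved directly.
\item The P\'osa closure does not provide that terminal case as you state it. A rotation at $v_1v_j$ with $\omega(v_1v_j)<\omega(v_{j-1}v_j)$ gives a strictly lighter path $P'$. Its new end $v_{j-1}$ need not have its neighbourhood on $P$: an off-path neighbour $x$ is only forced to satisfy $\omega(v_{j-1}x)\le\omega(P)-\omega(P')$.
\item Hence your claim that every endpoint in the closure has its weighted neighbourhood dominated by edges of $P$ holds only along weight-preserving rotations, and there may be none.
\end{itemize}

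\textbf{Second route (cone).} This has a concrete flaw, and its core is unproved.
\begin{itemize}
\item With cone edges of weight $0$, a heavy cycle in $Q+z$ need not pass through $z$. For unit-weight $K_m$ with $m\ge3$, every heaviest cycle of the cone is a Hamilton cycle of $Q$. A cycle in $Q$ gives a path only after discarding one of its edges.
\item This is repairable. Give every cone edge weight $M=\omega(Q)>0$. The cone has $m+1$ vertices and total weight $\omega(Q)+mM$. A cycle avoiding $z$ has weight at most $\omega(Q)<2\omega(Q)/m+2M$. So a cycle of weight at least $2(\omega(Q)+mM)/m$ must contain $z$ and exactly two cone edges, and deleting $z$ leaves a path of weight at least $2\omega(Q)/m$.
\item This only moves the whole difficulty into the weighted Erd\H{o}s--Gallai cycle bound: a $2$-connected weighted graph on $n$ vertices has a cycle of weight at least $2w/(n-1)$. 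That is a theorem of the same depth as the lemma.
\item Your description of it (rotation inequality at both ends, crossing-edge pigeonhole, ``times the appropriate count'') does not say what is summed or why the sum reaches $2w/(n-1)$.
\end{itemize}
As written, the proposal establishes the lemma only through the citation.
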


\begin{lemma}\label{lem:AB-weight}
Let $P=v_0v_1\cdots v_\ell$ be a path and let $A,B\subseteq V(P)$.  For $1\le i\le\ell$, put
\[
\omega_i=\ind_{\{v_{i-1}\in A,\ v_i\in B\}}+\ind_{\{v_{i-1}\in B,\ v_i\in A\}}.
\]
If
\[
L_{A,B}(P)=\max\{|i-j|:v_i\in A,\ v_j\in B\},
\]
with value $0$ when one of the two sets misses $V(P)$, then
\[
\sum_{i=1}^{\ell}\omega_i\le2L_{A,B}(P).
\]
\end{lemma}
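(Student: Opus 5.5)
The plan is to bound the contribution of each element of $A$ separately and then combine. Since $\omega_i\le 1$ unless a vertex lies in $A\cap B$, I first rewrite the sum as
\[
\sum_{i=1}^{\ell}\omega_i=\sum_{v_j\in A}\#\{i\in\{j,j+1\}\cap[1,\ell]:v_{2j+1-i}\in B\},
\]
so that each index $j$ with $v_j\in A$ contributes the number of neighbours of $v_j$ on $P$ that lie in $B$. By symmetry in $A$ and $B$, the sum also equals the number of ordered pairs $(x,y)$ of consecutive vertices of $P$ with $x\in A$ and $y\in B$. If $A$ or $B$ misses $V(P)$, the sum is $0$ and there is nothing to prove, so I assume both meet $P$.

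Let $j_{\min}$ and $j_{\max}$ be the extreme indices of vertices in $A\cup B$ on $P$. Every counted ordered pair $(v_{i-1},v_i)$ or $(v_i,v_{i-1})$ has both endpoints in $A\cup B$, so it corresponds to an edge $v_{i-1}v_i$ with $j_{\min}<i\le j_{\max}$. Each edge contributes at most $2$, giving
\[
\sum_i\omega_i\le 2(j_{\max}-j_{\min}).
\]
The remaining task is to show $j_{\max}-j_{\min}\le L_{A,B}(P)$, and this is the main obstacle. The inequality is false in general: if $A$ and $B$ both sit near one end, with $A$ alone near the other end, the span is large while $L_{A,B}$ is still large, so it is fine. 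The real danger is the span being achieved by two $A$-vertices.

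I handle this by refining the count. Only edges whose endpoints lie one in $A$ and one in $B$ count. Let $i_0$ be the smallest index with $v_{i_0}\in A\cup B$ such that an edge at $v_{i_0}$ counts, and $i_1$ the largest such. All counted edges lie in $[i_0,i_1]$, so the sum is at most $2(i_1-i_0)$. If the edge at $v_{i_0}$ counts, then $v_{i_0}$ and $v_{i_0+1}$ lie in $A$ and $B$ in some order, and likewise for $v_{i_1-1},v_{i_1}$. Consider the two vertices $v_{i_0},v_{i_0+1}$, one in $A$ and one in $B$, and the two vertices $v_{i_1-1},v_{i_1}$, also one in $A$ and one in $B$. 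Then some $A$-vertex among the first pair and some $B$-vertex among the second pair are at distance at least $i_1-i_0-1$. I would check whether the slack can be absorbed: take the $A$-vertex of the left pair and the $B$-vertex of the right pair, or the $B$-vertex of the left pair and the $A$-vertex of the right pair. One of the two choices uses the outer endpoints $v_{i_0}$ and $v_{i_1}$ whenever they belong to different sets, which gives distance exactly $i_1-i_0$.

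If instead $v_{i_0}$ and $v_{i_1}$ lie in the same set, say $A$, then both $v_{i_0+1}$ and $v_{i_1-1}$ are in $B$, so $L\ge i_1-i_0-1$. In this case I would instead bound the count sharply. The first edge and the last edge each contribute only $1$ unless the inner vertex is also in $A$, and in that subcase the distance improves. A case check on these two end edges should then yield $\sum\omega_i\le 2(i_1-i_0)-2\le 2L$ whenever $L=i_1-i_0-1$.

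This final case analysis on the end edges, where vertices lying in $A\cap B$ must be tracked carefully, is the step I expect to be the fiddliest.
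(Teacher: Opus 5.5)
Your window argument can be completed, but it takes a different route from the paper, and its final step is misjustified as written.

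\textbf{How the paper argues.} The paper never looks at the window spanned by counted edges. Instead it splits $\sum_i\omega_i$ into the two directed counts. An $A\to B$ transition at index $i$ satisfies $i_A^-\le i-1<i\le i_B^+$, so there are at most $\max\{0,i_B^+-i_A^-\}\le L_{A,B}(P)$ of them. Symmetrically, there are at most $\max\{0,i_A^+-i_B^-\}\le L_{A,B}(P)$ transitions $B\to A$. Each indicator is counted separately, so vertices of $A\cap B$ need no special care and there is no case analysis. The split even gives $L_{A,B}(P)$ rather than $2L_{A,B}(P)$ when all of $A$ precedes all of $B$. Your route bounds the total by $2(i_1-i_0)$ and must then recover one unit at each end, which is exactly where the $A\cap B$ bookkeeping bites.

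\textbf{The flaw in your final case.} Your claim that each end edge contributes only $1$ ``unless the inner vertex is also in $A$, and in that subcase the distance improves'' is wrong. If $v_{i_0+1}\in A\cap B$, the distance does not improve. The correct fix goes as follows.
\begin{enumerate}[label=(\arabic*),leftmargin=2.4em]
\item Once you exclude both ``$v_{i_0}\in A$, $v_{i_1}\in B$'' and ``$v_{i_0}\in B$, $v_{i_1}\in A$'', you are forced into $v_{i_0},v_{i_1}\in A\setminus B$, or both in $B\setminus A$.
\item Say both lie in $A\setminus B$. Then $v_{i_0}\notin B$ kills the second indicator of the edge $v_{i_0}v_{i_0+1}$. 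So that edge contributes exactly $1$ whatever $v_{i_0+1}$ is, and $v_{i_0+1}\in B$. The same holds for $v_{i_1-1}v_{i_1}$.
\item Since $v_{i_0+1}\in B$ and $v_{i_1}\notin B$, we get $i_1\ge i_0+2$. So the two end edges are distinct, and $\sum_i\omega_i\le 2(i_1-i_0)-2$.
\item The pair $v_{i_0+1}\in B$, $v_{i_1}\in A$ gives $L_{A,B}(P)\ge i_1-i_0-1$, which finishes the proof.
\end{enumerate}

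\textbf{Smaller slips.}
\begin{enumerate}[label=(\roman*),leftmargin=2.4em]
\item The other endpoint of edge $i$ at $v_j$ is $v_{2i-1-j}$, not $v_{2j+1-i}$.
\item Taking only the $A$-vertex of the left pair and the $B$-vertex of the right pair guarantees distance just $i_1-i_0-2$, in the configuration $v_{i_0},v_{i_1-1}\in B$ and $v_{i_0+1},v_{i_1}\in A$. You do need both orientations, which you then use.
\item The case with no counted edge, where $i_0,i_1$ are undefined, should be dismissed separately.
\end{enumerate}
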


The sets $A$ and $B$ in this lemma need not be disjoint.

\begin{proof}
If $A=\varnothing$ or $B=\varnothing$, the assertion is immediate.  We may therefore assume that all four indices below are defined.
Let $i_A^-$ and $i_A^+$ be the first and last indices occupied by $A$, and define $i_B^-,i_B^+$ similarly.  The number of transitions from $A$ to $B$ is at most
\[
\max\{0,i_B^+-i_A^-\}\le L_{A,B}(P),
\]
and the number of transitions from $B$ to $A$ is at most
\[
\max\{0,i_A^+-i_B^-\}\le L_{A,B}(P).
\]
Adding the two estimates proves the assertion.
\end{proof}

\begin{lemma}\label{lem:rooted-p3}
Let $D\ge2$ be an integer, and let $R$ be an $M$-vertex graph with distinct roots $x,y$.  Suppose that no $x$--$y$ path has length at least $D+1$.  Let
\[
P_3(R;x,y)=|\{(u,w):xuwy\text{ is a path in }R\}|.
\]
Then
\[
P_3(R;x,y)\le(D-2)(M-2)\le(D-2)M.
\]
\end{lemma}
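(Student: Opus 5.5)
The plan is to apply the weighted path lemma (\cref{lem:FMR}) to an auxiliary graph and then bound the weight of the resulting path with \cref{lem:AB-weight}. A long weighted path would produce a long $x$--$y$ path, and the hypothesis forbids that.

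\emph{Trivial case.} If $M=2$, there is no admissible pair $(u,w)$ with $u,w\notin\{x,y\}$, so $P_3(R;x,y)=0$ and there is nothing to prove. So assume $M\ge3$.

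\emph{Auxiliary graph and weights.} Let $Q=R-\{x,y\}$, a graph on $m=M-2\ge1$ vertices. Put $A=N_R(x)\setminus\{y\}$ and $B=N_R(y)\setminus\{x\}$. For an edge $uw\in E(Q)$ set
\[
\omega(uw)=\ind_{\{u\in A,\ w\in B\}}+\ind_{\{w\in A,\ u\in B\}}.
\]
This counts exactly the ordered pairs $(u,w)$ and $(w,u)$ for which $xuwy$, respectively $xwuy$, is a path in $R$. Distinctness is automatic: $u\ne w$ because $uw$ is an edge, and $u,w\notin\{x,y\}$ by construction. Hence $\omega(Q)=P_3(R;x,y)$.

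\emph{Applying \cref{lem:FMR}.} It gives a path $P=v_0v_1\cdots v_\ell$ in $Q$ with
\[
\omega(P)\ge \frac{2P_3(R;x,y)}{M-2}.
\]
On $P$, the edge weight $\omega(v_{i-1}v_i)$ is precisely the quantity $\omega_i$ of \cref{lem:AB-weight}, taken with the sets $A\cap V(P)$ and $B\cap V(P)$. That lemma therefore gives $\omega(P)\le 2L_{A,B}(P)$.

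\emph{Using the hypothesis.} If $L_{A,B}(P)=0$ we are done. Otherwise, choose $v_i\in A$ and $v_j\in B$ with $|i-j|=L_{A,B}(P)$. Then $x v_i\cdots v_j y$, following $P$ between $v_i$ and $v_j$, is a simple $x$--$y$ path in $R$. It is simple because $P$ avoids $x$ and $y$. Its length is $L_{A,B}(P)+2$; the case $i=j$, with $v_i\in A\cap B$, is allowed. By hypothesis this length is at most $D$, so $L_{A,B}(P)\le D-2$, which is nonnegative since $D\ge2$. Combining the bounds gives
\[
\frac{2P_3(R;x,y)}{M-2}\le 2(D-2),
\]
that is, $P_3(R;x,y)\le (D-2)(M-2)\le (D-2)M$.

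The only point needing care is that the weights defined on edges of $Q$ match exactly the $\omega_i$ of \cref{lem:AB-weight} restricted to $P$. There is no real obstacle beyond that check.
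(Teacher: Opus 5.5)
Your proposal is correct and follows the paper's proof essentially step for step. Both apply \cref{lem:FMR} to $R-\{x,y\}$ with the $A$--$B$ edge weights, then bound the weight of the resulting path by \cref{lem:AB-weight}, and finally use $L_{A,B}(P)\le D-2$, which follows because a subpath from $A$ to $B$ extends to a rooted $x$--$y$ path.
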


\begin{proof}
If $M=2$, then $P_3(R;x,y)=0$, so the assertion is immediate.  Assume $M\ge3$.
Put
\[
R^\circ=R-\{x,y\},\quad A=N_R(x)\cap V(R^\circ),\quad B=N_R(y)\cap V(R^\circ).
\]
Give an edge $uv\in E(R^\circ)$ the weight
\[
\omega(uv)=\ind_{\{u\in A,\ v\in B\}}+\ind_{\{v\in A,\ u\in B\}}.
\]
Every ordered rooted path $xuwy$ contributes one unit to the weight of its middle edge, and hence
\[
\omega(R^\circ)=P_3(R;x,y).
\]
By \cref{lem:FMR}, $R^\circ$ has a path $P$ with
\[
\omega(P)\ge\frac{2P_3(R;x,y)}{M-2}.
\]
Any subpath of $P$ joining $A$ to $B$ extends, after adding $x$ and $y$, to an $x$--$y$ path in $R$.  Therefore $L_{A,B}(P)\le D-2$.  By \cref{lem:AB-weight},
\[
\omega(P)\le2(D-2).
\]
Combining the last two inequalities proves the lemma.
\end{proof}

\begin{lemma}\label{lem:rooted-shortening}
Let $R$ be a graph with roots $x,y$.  Suppose every edge of $R-\{x,y\}$ is the middle edge of an $x$--$y$ path of length three.  If $R$ contains an $x$--$y$ path of length at least $L\ge3$, then it contains one of length $L$ or $L+1$.
\end{lemma}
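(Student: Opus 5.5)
The plan is a direct shortcut argument along one long path; no extremal choice or induction should be needed. Let $P=xv_1v_2\cdots v_{m-1}y$ be an $x$--$y$ path of length $m\ge L$ in $R$, and write $v_0=x$ and $v_m=y$. If $m\in\{L,L+1\}$ there is nothing to prove, so assume $m\ge L+2$. The idea is to take one suitably placed interior edge of $P$ and use the hypothesis to reroute through $x$ or $y$. This produces a path of length exactly $L$ or $L+1$.

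Set $k=L-1$ and consider the edge $v_kv_{k+1}$ of $P$. Since $L\ge3$, we have $k\ge2\ge1$. Since $m\ge L+2$, we have $k+1=L\le m-2\le m-1$. Hence both $v_k$ and $v_{k+1}$ are interior vertices of $P$, so $v_kv_{k+1}\in E(R-\{x,y\})$. By hypothesis, $v_kv_{k+1}$ is the middle edge of an $x$--$y$ path of length three. So one of the following holds:
\begin{itemize}
\item[(a)] $xv_k,\ v_{k+1}y\in E(R)$;
\item[(b)] $xv_{k+1},\ v_ky\in E(R)$.
\end{itemize}

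In case (a), consider $xv_1\cdots v_kv_{k+1}y$. It is the initial segment of $P$ from $x$ to $v_{k+1}$ followed by the edge $v_{k+1}y$, so its length is $(k+1)+1=L+1$.

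In case (b), consider $xv_1\cdots v_ky$. It is the initial segment of $P$ from $x$ to $v_k$ followed by the edge $v_ky$, so its length is $k+1=L$.

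In both cases the new path uses the vertices of an initial segment of $P$ together with $y$. The condition $k+1\le m-1$ guarantees that $y$ does not already occur on that segment, so the path is simple. This gives a path of length $L$ or $L+1$, as required.

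The only delicate point, and the one I would check most carefully, is the index bookkeeping. One must verify that the chosen edge genuinely avoids both roots, which is where $L\ge3$ and $m\ge L+2$ are used. One must also check that the two possible orientations of the length-three path give lengths $L+1$ and $L$ respectively, rather than something outside the target range. Choosing $k=L-1$ makes both cases land in $\{L,L+1\}$. Once that choice is made, the rest is immediate.
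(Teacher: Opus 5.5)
Your proposal is correct and follows the paper's proof: both take the edge $v_{L-1}v_L$ of a long $x$--$y$ path, use the length-three hypothesis to get $v_{L-1}y$ or $v_Ly$, and close up the corresponding prefix to obtain length $L$ or $L+1$. The only cosmetic difference is that you also set aside $m=L+1$ as trivial, whereas the paper runs the same argument for all $m\ge L+1$, where $v_L$ is already an interior vertex.
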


\begin{proof}
Let $P=xv_1\cdots v_{\ell-1}y$ be an $x$--$y$ path with $\ell\ge L$.  If $\ell=L$, there is nothing to prove.  Otherwise $v_{L-1}v_L$ is an internal edge of $R$.  Since it is the middle edge of a rooted path of length three, either $v_{L-1}y\in E(R)$ or $v_Ly\in E(R)$.  The corresponding prefix of $P$, followed by this edge, has length $L$ or $L+1$.
\end{proof}

\subsection{Two global structural lemmas}

We use the following two path lemmas.

\begin{lemma}[Menger's theorem, see Diestel~\cite{Diestel2017}]\label{lem:two-connections}
Let $G$ be $2$-connected.
\begin{enumerate}[label=(\roman*),leftmargin=2.4em]
\item If $S_1,S_2\subseteq V(G)$ are disjoint and $|S_1|,|S_2|\ge2$, then $G$ contains two vertex-disjoint $S_1$--$S_2$ paths.
\item If $S_1\cap S_2=\{u\}$ and both $S_1\setminus\{u\}$ and $S_2\setminus\{u\}$ are nonempty, then $G-u$ contains an $(S_1\setminus\{u\})$--$(S_2\setminus\{u\})$ path.
\end{enumerate}
\end{lemma}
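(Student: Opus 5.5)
Both parts are standard consequences of Menger's theorem; the plan is to reduce each to the usual fan and path forms, using $2$-connectivity.

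\emph{Part (i).} Form an auxiliary graph $G'$ from $G$ by adding a new vertex $s$ adjacent to every vertex of $S_1$ and a new vertex $t$ adjacent to every vertex of $S_2$. I first check that $s$ and $t$ cannot be separated in $G'$ by a single vertex $z$.
\begin{itemize}
\item Since $|S_1|\ge2$, the vertex $s$ keeps a neighbour $u\in S_1\setminus\{z\}$.
\item Likewise $t$ keeps a neighbour $w\in S_2\setminus\{z\}$.
\item Because $G$ is $2$-connected, $G-z$ is connected (if $u=w$ there is nothing to join), so $G-z$ contains a $u$--$w$ path.
\end{itemize}
By the vertex form of Menger's theorem, $G'$ therefore has two internally disjoint $s$--$t$ paths. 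Deleting $s$ and $t$ gives two vertex-disjoint paths in $G$, each starting in $S_1$ and ending in $S_2$. Shortening each to its last vertex in $S_1$ and then to its first subsequent vertex in $S_2$ yields genuine $S_1$--$S_2$ paths; these stay disjoint because they are subpaths.

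\emph{Part (ii).} Since $G$ is $2$-connected, $G-u$ is connected, and $S_1\setminus\{u\}$ and $S_2\setminus\{u\}$ are nonempty subsets of it.
\begin{itemize}
\item If the two sets intersect, a common vertex is a trivial path.
\item Otherwise, take any path in $G-u$ from a vertex of $S_1\setminus\{u\}$ to a vertex of $S_2\setminus\{u\}$, and trim it to a minimal subpath joining the two sets.
\end{itemize}

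\emph{Main obstacle.} The only delicate point is the degenerate case $u=w$ in the separator argument for part (i). Since $S_1$ and $S_2$ are disjoint, $u=w$ cannot actually occur, so this case is harmless. No further difficulty is expected.
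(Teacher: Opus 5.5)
Your proposal is correct and follows the paper's route. For part (i) the paper just cites the two-set form of Menger's theorem, and you derive that form from the $s$--$t$ version via auxiliary vertices; for part (ii) both arguments use the connectedness of $G-u$, though your ``sets intersect'' case is vacuous, since $S_1\cap S_2=\{u\}$ makes $S_1\setminus\{u\}$ and $S_2\setminus\{u\}$ disjoint.
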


\begin{proof}
Part~(i) is the two-set form of Menger's theorem.  Part~(ii) follows from the connectedness of $G-u$.
\end{proof}

\begin{lemma}[Li and Ning~\cite{LiNing2021}]\label{lem:LiNing}
Let $B$ be a $2$-connected graph of order $m$, let $x,y\in V(B)$, and let $k\ge2$.  If at least $(m-1)/2$ vertices of $V(B)\setminus\{x,y\}$ have degree at least $k$ in $B$, then $B$ contains an $x$--$y$ path of length at least $k$.
\end{lemma}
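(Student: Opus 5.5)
Since this is the theorem of Li and Ning, the paper only quotes it. If I had to reprove it, I would follow the Bondy--Jackson and Fan approach to Erdős--Gallai type path theorems and argue by induction on $m$. Fix a counterexample with $m$ minimum, so every $x$--$y$ path has length at most $k-1$. Call a vertex of $V(B)\setminus\{x,y\}$ \emph{heavy} if its degree in $B$ is at least $k$. By hypothesis there are at least $(m-1)/2$ heavy vertices.

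\textbf{Reduction to the $3$-connected case.} Suppose $B$ has a $2$-cut $\{u,v\}$. Let $B_1$ be a side of it; if possible, choose $B_1$ to contain neither $x$ nor $y$ internally.
\begin{enumerate}[label=(\alph*),leftmargin=2.4em]
\item Replace $B_1$ by the edge $uv$, or by a $u$--$v$ path of suitable length, to obtain a smaller $2$-connected graph $B'$.
\item Separately consider $B_1+uv$ with roots $u,v$.
\item A long $x$--$y$ path in $B'$ through $uv$ lifts to $B$: replace $uv$ by a long $u$--$v$ path inside $B_1$.
\item The degree of a vertex of $B'$ can only drop at $u$ and $v$. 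So, after a bookkeeping check that $u,v$ can be made heavy or excluded, the ``at least half heavy'' hypothesis passes to one of the two pieces. This uses that the fraction $(m-1)/2$ is additive in the right way under $2$-sums.
\end{enumerate}
Minimality then gives a contradiction. If $x$ and $y$ lie on opposite sides of every $2$-cut, the same decomposition applies along the chain of $2$-cuts separating $x$ from $y$.

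\textbf{The $3$-connected case.} Here I would run a Pósa-type rotation argument. Take a longest $x$--$y$ path $P$ and a component $H$ of $B-V(P)$. Three-connectivity gives at least three attachment vertices of $H$ on $P$. The standard crossing argument shows two things:
\begin{itemize}
\item Consecutive attachments are separated by at least as many vertices as the longest path inside $H$ between them.
\item A heavy vertex $w$ of $H$, together with the path maximality, forces $|P|\ge \deg(w)+1\ge k+1$.
\end{itemize}
So we may assume no vertex off $P$ is heavy. Then at least $(m-1)/2$ heavy vertices lie on $P$, and in particular $|V(P)|\ge (m+3)/2$.

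It then remains to handle heavy vertices on $P$, which is the classical Erdős--Gallai situation. For a heavy interior vertex $z$ of $P$, its neighbours off $P$ sit in components of $B-V(P)$ attached to $P$, and its chords yield crossing-edge rotations. Counting chords from the two extreme heavy vertices of $P$ toward $x$ and $y$ gives the desired length, exactly as in the proof that $\delta\ge k$ off $\{x,y\}$ forces an $x$--$y$ path of length at least $k$.

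\textbf{Main obstacle.} The main difficulty is the last step: making the counting work when only half the interior vertices are heavy. Erdős--Gallai uses the degrees of the two ends of a rotated path, and those ends need not be heavy here. I would first try a pigeonhole argument. The bound $|V(P)|\ge(m+3)/2$ means the light vertices are outnumbered. Among the endpoints of rotations of $P$ fixing $x$ (and symmetrically $y$), the set of reachable endpoints is closed under the rotation neighbourhood. If that set were entirely light, the Pósa lemma $|N(S)|<2|S|$ would bound its size against the number of heavy vertices and contradict the half-heavy hypothesis. If this fails, I would instead strengthen the induction hypothesis to weighted degree conditions, in the spirit of the proof of Fan's theorem.
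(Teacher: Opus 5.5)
The paper gives no proof of this statement. It imports the result from Li and Ning and uses it only in the proof of \cref{lem:terminal-structure}, so the citation is all the paper needs. As a self-contained reproof, however, your sketch has a genuine gap in the $3$-connected case. Maximality of $P$ does not give the claim that a heavy vertex $w$ in a component $H$ of $B-V(P)$ forces $|V(P)|\ge\deg(w)+1$. Maximality only bounds detours through $H$, and $H$ can contain no long path even when $\deg(w)$ is huge. For example, take vertices $x,c,y$ and three disjoint stars with centres $w_1,w_2,w_3$. Join each leaf to its centre and to two of $x,c,y$, with leaves of all three types in every star. This graph is $3$-connected. Every $x$--$y$ path consists of at most two segments between consecutive terminals, and each segment lies in one star and has length at most $4$. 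So a longest $x$--$y$ path has length $8$ and misses one centre, whose degree is arbitrary. You therefore cannot reduce to the case where all heavy vertices lie on $P$ without using the global half-heavy count, and that count is the whole content of the theorem.

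The other steps are not established either.
In the $2$-cut reduction, suppose the interior $C$ of $B_1$ has fewer than $(|C|+1)/2$ heavy vertices. Then the rest of $B$ has at least $(m'-1)/2$ heavy vertices, where $m'=m-|C|$, but only when $u$ and $v$ are counted with their degrees in $B$. After $B_1$ is replaced by the edge $uv$, both may become light, so up to two heavy vertices are lost and the induction hypothesis can fail. Replacing $B_1$ by a longer $u$--$v$ path only adds light vertices. The same loss at $u,v$ occurs when $x$ and $y$ are separated by the cut, where you also need the lengths in the two pieces to add up.

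The final counting step is left open, and the tools you suggest do not fit. P\'osa rotations move a free endpoint, but here both ends are fixed. A longest $x$--$y$ path gives no control over $x$--$z$ paths with $z\ne y$, so neither the rotation-closed endpoint set nor the bound $|N(S)|<2|S|$ is available. The Erd\H{o}s--Gallai chord count needs the end vertices that arise after rerouting to be heavy, which a half-heavy hypothesis does not guarantee. The proposal is thus a plan whose key step remains unresolved.
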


\section{Fixed-endpoint path estimates and low-degree deletion}\label{sec:mixed}

This section proves the path estimates that arise after deleting a low-degree vertex.  We first count the even-length paths needed for even cycles.  For odd cycles, the remaining path has odd length and the argument is more involved.  We first give a general reduction for these paths and then treat three cases: odd circumference with $s\ge4$, even circumference, and odd circumference with $s=3$.  The last subsection converts the path estimates into deletion inequalities.

For $U\subseteq V(J)$ and $\ell\ge1$, let $P_\ell(J;U)$ be the number of unoriented paths of length $\ell$ whose two endpoints lie in $U$.

\subsection{The even-cycle case}

The following estimate is enough for the copies of an even cycle that contain a fixed low-degree vertex.

\begin{lemma}\label{lem:even-rooted-paths}
Fix integers $a\ge2$ and $1\le r\le a-1$, and fix $C>0$.  Let $J$ be an $N$-vertex graph and let $U\subseteq V(J)$ satisfy $|U|\le a-1$ and $e(J)\le CN$.  If $J$ has no path of length $2a$ with both endpoints in $U$, then
\[
P_{2r}(J;U)\le \binom{|U|}{2}\fall{a-2}{r-1}N^r
+O_{a,r,C}\bigl(N^{r-1/(a+r+1)}\bigr).
\]
\end{lemma}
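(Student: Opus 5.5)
The plan is to split the $U$--$U$ paths of length $2r$ by how they meet the vertices of large degree. Show that only the alternating "hub--free vertex" paths reach order $N^r$. Then bound those by a rich-pair structure whose long paths would give a forbidden path of length $2a$.

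\emph{Step 1 (high-degree set and run decomposition).} Fix a small exponent $\gamma>0$, to be chosen at the end, and let $B=\{v:\deg_J(v)\ge N^{1-\gamma}\}$. Since $e(J)\le CN$, we have $|B|\le 2CN^{\gamma}$. Take a path $u=v_0v_1\cdots v_{2r}=u'$ with $u,u'\in U$, and treat $U\cup B$ as fixed anchors. The internal vertices outside $U\cup B$ form maximal runs, and each run is bounded by anchors on both sides. For a fixed anchor pattern:
\begin{itemize}
\item a run of one vertex has at most $N$ choices;
\item a run of two vertices has at most $2CN$ choices (it is an edge);
\item a run of $t\ge3$ vertices has at most $2CN\cdot(N^{1-\gamma})^{t-2}$ choices (choose an edge, then walk through low-degree vertices).
\end{itemize}
A path has $2r-1$ internal vertices. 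It can have order $N^r$ choices only if every run is a singleton and the runs alternate with anchors. That forces the shape $u\,x_1b_1x_2b_2\cdots b_{r-1}x_r\,u'$ with $b_i\in U\cup B$ and $x_i\notin U\cup B$. Every other anchor pattern gives at most $O(N^{r-\gamma})$ paths. The number of anchor patterns is at most $(a+2CN^\gamma)^{r}$, which is polynomial in $N^\gamma$, so these patterns contribute $O(N^{r-\gamma'})$ in total for some $\gamma'>0$ depending on $\gamma$ and $r$.

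\emph{Step 2 (rich pairs).} The alternating paths number at most $\sum \prod_{i=0}^{r-1}\operatorname{codeg}(b_i,b_{i+1})$, where $b_0=u$ and $b_r=u'$. Call a pair of anchors \emph{rich} if its codegree is at least $\theta N$, with $\theta=N^{-\beta}$. Sequences containing a non-rich consecutive pair contribute at most $\theta N^r$ times the number of sequences. This is again an error term, provided the number of sequences is controlled. I will control it using $\sum_{b}\operatorname{codeg}(b,\cdot)\le \deg(b)^2$ and $\sum_v \deg(v)^2\le 2CN\cdot N$ rather than crudely by $|B|^{r-1}$. Now form the auxiliary rich graph $\mathcal R$ on $U\cup B$. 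Every rich pair has linearly many common neighbours, so a $U$--$U$ path $b_0b_1\cdots b_a$ in $\mathcal R$ of length exactly $a$ lifts greedily to a $U$--$U$ path of length $2a$ in $J$: at each step pick a fresh common neighbour, which is possible since $\theta N\gg 2a$. Hence $\mathcal R$ has no $U$--$U$ path of length $a$. The same greedy argument shows that a rich vertex set cannot be too large: a rich $U$--$U$ structure with $a-1$ internal hubs would already give the forbidden length.

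\emph{Step 3 (the main obstacle).} The remaining task is to show that the weighted count of rich $U$--$U$ paths $b_0b_1\cdots b_r$ in $\mathcal R$, with weight $\prod\operatorname{codeg}/N^r\le 1$, is at most $\binom{|U|}{2}\fall{a-2}{r-1}$ up to lower-order terms. In the extremal picture the internal hubs come from a fixed set of $a-2$ vertices, and $\fall{a-2}{r-1}$ counts ordered choices of $r-1$ distinct hubs. The difficulty is that $\mathcal R$ need not have at most $a-2$ non-$U$ vertices: a large rich graph with only short $U$--$U$ paths is conceivable.

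I would first try to show that, apart from $O(1/\theta)$ exceptional configurations, every non-$U$ vertex lying on a rich $U$--$U$ path of length $r$ belongs to a set $S$ with $|S|\le a-2$. The argument would combine the absence of rich $U$--$U$ paths of length $a$ with \cref{lem:two-connections}: if $a-1$ such hubs were all reachable, one can reroute through them to create a rich $U$--$U$ path of length exactly $a$. The exact length is adjusted using the freedom to insert or skip hubs, in the spirit of \cref{lem:rooted-shortening}. For ordered sequences of distinct hubs from $S$, the bound $\binom{|U|}{2}\fall{a-2}{r-1}$ is then immediate, since each unordered path is counted once. Finally, optimising $\gamma$ and $\beta$ against the error terms from Steps 1 and 2 (the number of anchors is about $N^\gamma$, the codegree threshold is $N^{1-\beta}$, and path length is $2r$) should produce the exponent loss $1/(a+r+1)$. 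That the loss depends on both $a$ and $r$ is consistent with balancing about $a+r+1$ such factors.
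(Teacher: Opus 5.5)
\textbf{There is a genuine gap.} Your Step~3 is the entire content of the lemma, and the route you sketch for it cannot work.

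\emph{Step 3: the global confinement claim is false.} You claim that, apart from negligible exceptions, all hubs on rich $U$--$U$ paths of length $r$ lie in one set $S$ with $|S|\le a-2$. Here is a counterexample. Take $a=3$, $r=2$, $U=\{x,z\}$, and two hubs $b_1,b_2$. Add disjoint sets $W_1,W_1',W_2,W_2'$, each of order about $N/4$. Join every vertex of $W_i$ to $x$ and $b_i$, and every vertex of $W_i'$ to $b_i$ and $z$.
\begin{itemize}
\item Then $e(J)\le 2N$, and every $x$--$z$ path has length exactly $4$, so there is no path of length $6$.
\item Yet $xb_1z$ and $xb_2z$ are both rich, each of weight $1/16$, while $a-2=1$. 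Neither can be discarded as an exception of small weight.
\item The lemma holds here only for a \emph{weighted} reason, namely $\sum_i|W_i|\le N$.
\end{itemize}
The same example refutes your greedy claim that $a-1$ hubs on rich $U$--$U$ paths yield a forbidden path. Hubs on different rich paths need not be rich-adjacent, so nothing threads them together. Also, \cref{lem:two-connections} is unavailable, because $J$ is not assumed $2$-connected; in the application $J=G-v$.

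\emph{The missing idea: fix the odd-position vertices, not the hubs.} Let $\mathbf y=(y_1,\dots,y_r)$ be the odd-position vertices. Each hub $p_i$ then lies in $A_i=(N_J(y_i)\cap N_J(y_{i+1}))\setminus\{x,z\}$.
\begin{itemize}
\item With $R=\bigcup_iA_i$, there are at most $\fall{|R|}{r-1}$ hub tuples. This is at most $\fall{a-2}{r-1}$ unless $|R|\ge a-1$.
\item If $|R|\ge a-1$, the layers are chained through the fixed $y_i$: vertices in the same or in consecutive layers share some $N_J(y_i)$. So $a-1$ vertices of $R$ thread into an $x$--$z$ sequence of $a$ steps.
\item Since there is no path of length $2a$, some step set $R_i$ must have at most $2a$ common neighbours, one of which is $y_i$. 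Here $R_i$ has order at most $a+1$ and is determined, up to $O(D^{a+1})$ choices, by the other $y$'s.
\item Hence such bad tuples number $O(D^{a+1}N^{r-1})$ when all degrees are at most $D$.
\end{itemize}
The confinement to $a-2$ hubs is local in $\mathbf y$, not global in $\mathcal R$.

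\emph{Step 1 is also incorrect as written.}
\begin{itemize}
\item For $r\ge3$ your run bound $2CN(N^{1-\gamma})^{t-2}$ is too weak. The pattern with no internal anchor has one run of $2r-1$ vertices and gives $N^{1+(2r-3)(1-\gamma)}$, which exceeds $N^r$ for small $\gamma$.
\item Multiplying a per-pattern bound $O(N^{r-\gamma})$ by the $N^{\Theta(\gamma)}$ anchor patterns does not give a total of lower order than $N^r$.
\item In Step~2 the identity is $\sum_{b'}\operatorname{codeg}(b,b')=\sum_{v\in N(b)}d_J(v)$, not $\deg(b)^2$.
\end{itemize}
The paper replaces this bookkeeping with the chain estimate $\sum_{F}|E\cap F|=\sum_{v\in E}d_J(v)\le 2e(J)$ over neighbourhoods $E,F$. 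This bounds all $x$--$z$ paths of length $2r$ by $O(N^r)$, and those with an odd-position vertex of degree above $D$ by $O(N^r/D)$. Balancing the latter against the $O(D^{a+r}N^{r-1})$ contribution of bad tuples, with $D=N^{1/(a+r+1)}$, gives the stated exponent. Your final optimisation is not carried out.
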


\begin{proof}
For $r=1$, a path is determined by its two endpoints and its middle vertex, so the bound follows.  Assume $r\ge2$.  Form the multihypergraph $\cH=\{N_J(y):y\in V(J)\}$, retaining equal neighborhoods with multiplicity.  It has $N$ hyperedges and total incidence at most $2CN$.  For $S\subseteq V(J)$, put
\[
d_{\cH}(S)=|\{y\in V(J):S\subseteq N_J(y)\}|.
\]

Fix distinct $x,z\in U$ and orient every path from $x$ to $z$.  For $\mathbf y=(y_1,\ldots,y_r)\in V(J)^r$, put $E_i=N_J(y_i)$.  If $x\notin E_1$ or $z\notin E_r$, put $\Psi_{x,z}(\mathbf y)=0$.  Otherwise, for $1\le i\le r-1$, define $A_i=(E_i\cap E_{i+1})\setminus\{x,z\}$, and let $\Psi_{x,z}(\mathbf y)$ be the number of pairwise distinct tuples $(p_1,\ldots,p_{r-1})$ with $p_i\in A_i$.  Every simple path
\[
x y_1p_1y_2p_2\cdots p_{r-1}y_rz
\]
is counted once, whereas the sum may also count nonsimple sequences.  Hence
\begin{equation}\label{eq:even-path-sum}
P_{2r}(J;x,z)\le\sum_{\mathbf y\in V(J)^r}\Psi_{x,z}(\mathbf y).
\end{equation}

First suppose that $d_J(y)\le D$ for every $y$, where $D$ will be chosen later.  Call $\mathbf y$ bad if $\Psi_{x,z}(\mathbf y)>\fall{a-2}{r-1}$.  We claim that there are $O_{a,r}(D^{a+1}N^{r-1})$ bad tuples.  Put $R=\bigcup_{i=1}^{r-1}A_i$.  A bad tuple has $|R|\ge a-1$.  Otherwise it admits at most $\fall{a-2}{r-1}$ choices.  Since it has a valid choice of the $p_i$, choose $a-1$ distinct vertices of $R$ meeting every $A_i$, order them as $w_1,\ldots,w_{a-1}$ in nondecreasing order of an assigned layer, and put $w_0=x$ and $w_a=z$.

Assign the step $w_{j-1}w_j$ to position $i$ if its endpoints lie in $E_i$: use position $1$ for the first step, position $r$ for the last step, position $i$ inside layer $i$, and position $i+1$ between layers $i$ and $i+1$.  Every layer occurs, so no step skips a layer.  For each used position $i$, let $R_i$ be the set of endpoints of its assigned steps.  Then $|R_i|\le a+1$ and
\[
R_i\subseteq\{x,z\}\cup E_{i-1}\cup E_{i+1},
\]
where a nonexistent term is omitted.  If $d_{\cH}(R_i)\ge2a+1$ at every used position, choose a distinct common neighbor for each of the $a$ steps.  At one step at most the $a+1$ vertices $w_0,\ldots,w_a$ and the at most $a-1$ previously chosen representatives are forbidden, so one of the $2a+1$ candidates remains.  The chosen vertices form a simple $x$--$z$ path of length $2a$, a contradiction.  Thus some used position $i$ satisfies $d_{\cH}(R_i)\le2a$.

Fix this position and all entries of $\mathbf y$ except $y_i$.  The displayed containment and the degree bound show that $R_i$ is a subset of a fixed set of order at most $2D+2$ and has order at most $a+1$.  Hence it has $O_a(D^{a+1})$ possible values.  Each value permits at most $2a$ choices for $y_i$.  Summing over $i$ and the other $r-1$ entries proves the claim.  A bad tuple contributes at most $D^{r-1}$ to \eqref{eq:even-path-sum}, and every other tuple contributes at most $\fall{a-2}{r-1}$.  Therefore
\begin{equation}\label{eq:even-bounded-degree-path}
P_{2r}(J;x,z)\le\fall{a-2}{r-1}N^r+O_{a,r}(D^{a+r}N^{r-1}).
\end{equation}

We remove the degree bound.  Let $Y_{>D}=\{y:d_J(y)>D\}$.  Since $e(J)\le CN$, we have $|Y_{>D}|\le2CN/D$.  For a fixed hyperedge $H\in\cH$ and position $j$,
\begin{equation}\label{eq:even-chain-sum}
\sum_{\substack{E_1,\ldots,E_r\in\cH\\E_j=H}}
\prod_{i=1}^{r-1}|E_i\cap E_{i+1}|\le (2e(J))^{r-1}.
\end{equation}
Indeed, for every $E\in\cH$, $\sum_{F\in\cH}|E\cap F|=\sum_{v\in E}d_J(v)\le2e(J)$.  Summing successively to the left and right proves \eqref{eq:even-chain-sum}.  Since $\Psi_{x,z}(\mathbf y)\le\prod_i|E_i\cap E_{i+1}|$, all tuples with a high-degree entry contribute $O_{r,C}(N^r/D)$.  The other tuples satisfy the proof of \eqref{eq:even-bounded-degree-path}.  Thus
\[
P_{2r}(J;x,z)\le\fall{a-2}{r-1}N^r
+O_{a,r,C}(D^{a+r}N^{r-1}+N^r/D).
\]
Choose $D=\lfloor N^{1/(a+r+1)}\rfloor$ and sum over the $\binom{|U|}{2}$ unordered endpoint pairs.
\end{proof}

We now turn to odd cycles.  We first isolate the reduction used in all three cases.

\subsection{A general reduction for odd cycles}\label{sec:reduction}

The next three lemmas reduce the paths that contribute a term of order $N^h$ to two explicitly defined forms and give the selection and encoding tools used later.

Let $K\subseteq V(J)$ contain $p$ and $q$, and put $\cO=V(J)\setminus K$.  For $y\in\cO$, define its type by $T(y)=N_J(y)\cap K$.  For $S\subseteq K$, put $\cO_S=\{y\in\cO:T(y)=S\}$ and $\mathscr S=\{S\subseteq K:|\cO_S|\ge M\}$.  A family of length-three $c$--$c'$ paths is called $M$-disjoint if it contains $M$ paths whose internal vertex pairs are pairwise disjoint.

\begin{lemma}\label{lem:kernel-normal-form}
Fix $a\ge h+2$, $h\ge2$, $\kappa\in\{0,1\}$, $\eps>0$, and an integer $M\ge1$.  There are constants $N_0$ and $C$, depending only on these parameters, such that the following holds.  Let $N\ge N_0$, let $J$ be an $N$-vertex graph with $\circum(J)\le2a+\kappa$, and let $p\ne q$ be vertices of $J$.  Then there are a set $K\subseteq V(J)$ with $p,q\in K$ and $|K|=O_{a,h,\eps,M}(1)$, and a set
\[
\mathscr P\subseteq\{(c,c')\in K^2:c\ne c'\}.
\]
For every $(c,c')\in\mathscr P$, there is an $M$-disjoint family $\mathsf R(c,c')$ of ordered paths $(c,u,v,c')$ of length three in $J$ with $u,v\in V(J)\setminus K$.  For every ordered pair of distinct vertices $(c,c')\in K^2\setminus\mathscr P$, put $\mathsf R(c,c')=\varnothing$.

Apart from at most $\eps N^h+CN^{h-1}$ paths, every $p$--$q$ path of length $2h+1$ has one of the following forms.
\begin{enumerate}[label=(D),leftmargin=3.1em]
\item There are distinct vertices $c_0=p,c_1,\ldots,c_{h+1}=q$ in $K$ and an index $\ell\in\{0,\ldots,h\}$ such that $c_\ell c_{\ell+1}\in E(J[K])$.  The ordered path is obtained from $c_0,c_1,\ldots,c_{h+1}$ by replacing every pair $c_i,c_{i+1}$ with $i\ne\ell$ by $c_i,w_i,c_{i+1}$, where $w_i\in\cO$, $T(w_i)\in\mathscr S$, and $\{c_i,c_{i+1}\}\subseteq T(w_i)$.
\item[(R)] There are distinct vertices $x_0=p,x_1,\ldots,x_h=q$ in $K$ and an index $j\in\{0,\ldots,h-1\}$.  In the ordered path, the part from $x_j$ to $x_{j+1}$ is a member of $\mathsf R(x_j,x_{j+1})$.  Every other pair $x_i,x_{i+1}$ is replaced by $x_i,w_i,x_{i+1}$, where $T(w_i)\in\mathscr S$ and $\{x_i,x_{i+1}\}\subseteq T(w_i)$.
\end{enumerate}
All internal vertices displayed in either form are distinct.
\end{lemma}

\begin{proof}
Every subgraph of $J$ has no cycle of length at least $2a+2$.  The Erd\H{o}s--Gallai cycle theorem~\cite{ErdosGallai1959} therefore gives $e(H)=O_a(|H|)$ for every subgraph $H\subseteq J$.  In particular, $J$ has an orientation with maximum outdegree at most $D=2a+1$: repeatedly remove a vertex of degree at most $D$ and orient its incident edges towards the vertices removed later.

Fix this orientation.  On an oriented copy of a path, call an internal vertex a source if both incident path edges are directed away from it.  Internal sources are pairwise nonadjacent, so a path of length $2h+1$ has at most $h$ internal sources.  Once the internal sources and the fixed endpoints $p,q$ are given, every other path vertex can be reached by following directed edges from one of them.  Since every outdegree is at most $D$, the number of paths with at most $h-1$ internal sources is $O_{a,h}(N^{h-1})$.

We next consider an oriented path with exactly $h$ internal sources.  These sources cover $2h$ different path edges.  Hence there is one path edge which is not incident with an internal source.  Choose $\rho>0$, to be determined later, and put
\[
K_0=\{p,q\}\cup\{z:d^-(z)\ge\rho N\}.
\]
Since $e(J)=O_a(N)$, the order of $K_0$ is bounded in terms of $a$ and $\rho$.  Every vertex outside $K_0$ has indegree less than $\rho N$.

Every internal vertex which is neither a source nor incident with the uncovered edge is the common head of its two adjacent sources.  The number of choices in which such a common head lies outside $K_0$ is bounded by
\[
\sum_{z\notin K_0}d^-(z)^2
\le \rho N\sum_zd^-(z)=O_a(\rho N^2).
\]
After the other $h-2$ sources have been chosen, all remaining vertices have only $O_{a,h}(1)$ choices.  Summing over the possible positions gives $O_{a,h}(\rho N^h)$ paths.

We may therefore assume that every such internal non-source vertex belongs to $K_0$.  Paths with an internal source in $K_0$ contribute only $O_{a,h,\rho}(N^{h-1})$: after its position and its vertex are fixed, at most $h-1$ sources remain to be chosen.  We discard these paths as well.

If both ends $u,v$ of the uncovered edge lie outside $K_0$, orient that edge as $u\to v$.  The two adjacent sources must send an edge to $u$ and to $v$, respectively.  The number of choices for these four vertices is at most
\[
\sum_{\substack{u\to v\\u,v\notin K_0}}d^-(u)d^-(v)
\le \rho N\sum_u d^+(u)d^-(u)
\le D\rho N\sum_ud^-(u)=O_a(\rho N^2).
\]
This again gives $O_{a,h}(\rho N^h)$ paths.  Choose $\rho$ so that the sum of the two terms of this order is at most $\eps N^h$.  Outside these errors, the uncovered edge has both ends in $K_0$ or exactly one end outside $K_0$.  In the first case the path has the preliminary form~\textup{(D)} with kernel $K_0$.  In the second case the outside endpoint of the uncovered edge is followed or preceded by an internal source, and these two vertices form a path of length three between two vertices of $K_0$.  Every other source gives a path of length two between two vertices of $K_0$.

We now select the length-three connector families without changing types repeatedly.  For distinct $c,c'\in K_0$, let $\mathsf R_0(c,c')$ be the set of paths $(c,u,v,c')$ with $u,v\notin K_0$ such that either $u\to c$ and $u\to v$, or $v\to u$ and $v\to c'$.  Thus one of $u,v$ is the internal source of the path.  Let $\nu(c,c')$ be the largest number of members of $\mathsf R_0(c,c')$ with pairwise disjoint internal vertex pairs.

List all ordered pairs of distinct vertices of $K_0$ as $(c_1,c'_1),\ldots,(c_q,c'_q)$ so that $\nu_i=\nu(c_i,c'_i)$ and $\nu_1\le\cdots\le\nu_q$, where $q=|K_0|(|K_0|-1)$.  Put $S_0=0$.  Starting with $k=0$, stop if $k=q$ or $\nu_{k+1}\ge M+2S_k$.  Otherwise replace $k$ by $k+1$, put $S_k=\sum_{i=1}^k\nu_i$, and repeat.  Thus $k$ is the number of initial families selected, and $S_k$ is the sum of their matching numbers.  At every increase, $S_{k+1}<3S_k+M$, so the final value of $S_k$ is bounded in terms of $M$ and $q$.

For every $1\le i\le k$, choose a maximum matching $\mathcal M_i$ in $\mathsf R_0(c_i,c'_i)$, and let $Z$ be the union of the internal vertices of all paths in $\bigcup_{i=1}^k\mathcal M_i$.  Each path has two internal vertices, so $|Z|\le2\sum_{i=1}^k\nu_i=2S_k$.  Since each $\mathcal M_i$ is maximal, every member of $\mathsf R_0(c_i,c'_i)$ with $i\le k$ has an internal vertex in $Z$.

Put $K=K_0\cup Z$.  If $k<q$, then, for $k<i\le q$, let $\mathsf R(c_i,c'_i)$ consist of the members of $\mathsf R_0(c_i,c'_i)$ whose internal vertices avoid $Z$.  A maximum matching in $\mathsf R_0(c_i,c'_i)$ loses at most $|Z|$ members, because its paths have pairwise disjoint internal vertex pairs.  By the stopping rule, the ordering of the $\nu_i$, and $|Z|\le2S_k$, the remaining matching has size at least
\[
\nu_i-|Z|\ge\nu_{k+1}-2S_k\ge M.
\]
The ordered pairs $(c_i,c'_i)$ with $i>k$ form $\mathscr P$.  If $k=q$, put $\mathscr P=\varnothing$.  In either case, put $\mathsf R(c,c')=\varnothing$ for every ordered pair in $K_{\ne}^2\setminus\mathscr P$.

Paths having an internal source in $Z$ contribute $O_{a,h,\rho,M}(N^{h-1})$ and are discarded.  Consider a preliminary path of form~\textup{(R)} whose length-three part meets $Z$.  Its internal vertex in $Z$ is then the non-source vertex.  After that vertex is added to the kernel, the same part consists of one kernel edge and one length-two path, so the whole path has form~\textup{(D)} with kernel $K$.  If the length-three part avoids $Z$, its family cannot be among the first $k$ families, and it belongs to the corresponding $\mathsf R(c,c')$.  Thus all remaining paths have form~\textup{(D)} or~\textup{(R)} relative to $K$.

It remains only to define the final types.  Put $\cO=V(J)\setminus K$, $T(y)=N_J(y)\cap K$, $\cO_S=\{y\in\cO:T(y)=S\}$, and $\mathscr S=\{S:|\cO_S|\ge M\}$.  The union of the classes not in $\mathscr S$ has fewer than $M2^{|K|}$ vertices.  If a source used by a length-two part lies in this union, fixing that source leaves at most $h-1$ sources to be chosen.  All such paths therefore contribute $O_{a,h,\rho,M}(N^{h-1})$.  After discarding them, every length-two part has a type in $\mathscr S$.  The set $K$ and all constants above are bounded in terms of $a,h,\eps,M$.  This proves the lemma.
\end{proof}

For distinct $c,c'\in S\in\mathscr S$, define the set of length-two connectors
\[
\mathsf B_S(c,c')=\{(c,y,c'):y\in\cO_S\}.
\]

\begin{lemma}\label{lem:connector-selection}
Assume the notation and conclusions of \cref{lem:kernel-normal-form}.  Let $Z\subseteq V(J)$, and let $\mathcal Q_1,\ldots,\mathcal Q_t$ be connector families.  Each $\mathcal Q_i$ is either $\mathsf B_S(c,c')$ for some $S\in\mathscr S$ and distinct $c,c'\in S$, or $\mathsf R(c,c')$ for some $(c,c')\in\mathscr P$.  If $t+|Z|\le M/4$, then one can choose $Q_i\in\mathcal Q_i$ for every $i$ so that the internal vertices of $Q_1,\ldots,Q_t$ are distinct and avoid $Z$.
\end{lemma}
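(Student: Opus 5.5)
Choose the connectors greedily, one family at a time, in the order $\mathcal Q_1,\ldots,\mathcal Q_t$, maintaining a forbidden set $F$ initialised as $F=Z$. Before choosing $Q_i$, the set $F$ consists of $Z$ together with the internal vertices of $Q_1,\ldots,Q_{i-1}$. Each previously chosen connector has at most two internal vertices, so
\[
|F|\le |Z|+2(i-1)\le |Z|+2t.
\]
It then suffices to show that every family $\mathcal Q_i$ contains more than $|Z|+2t$ members whose internal vertex sets are pairwise disjoint. A forbidden vertex can then destroy at most one such member, and at least one member has all its internal vertices outside $F$.

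The two kinds of family are handled separately. If $\mathcal Q_i=\mathsf B_S(c,c')$ with $S\in\mathscr S$, its members $(c,y,c')$, $y\in\cO_S$, have the single internal vertex $y$, and distinct $y$ give disjoint internal sets. Since $S\in\mathscr S$, we have $|\cO_S|\ge M$. These are genuine paths because $y\in\cO$ and $c,c'\in S=T(y)$, so $y$ is adjacent to both; also $y\notin K$, while $c,c'\in K$. If $\mathcal Q_i=\mathsf R(c,c')$ with $(c,c')\in\mathscr P$, then \cref{lem:kernel-normal-form} provides $M$ members whose internal pairs $\{u,v\}$ are pairwise disjoint, with $u,v\notin K$. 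In both cases there are at least $M$ members with pairwise disjoint internal vertex sets.

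The only step needing care is the counting. Each vertex of $F$ lies in the internal set of at most one of these $M$ disjoint members, so at least $M-|F|\ge M-|Z|-2t$ of them survive. From the hypothesis $t+|Z|\le M/4$ we get
\[
|Z|+2t\le 2(t+|Z|)\le M/2<M
\]
(here $M\ge1$), so at least one member survives and can be taken as $Q_i$. Adding its internal vertices to $F$ keeps the invariant, and the final connectors have distinct internal vertices avoiding $Z$. The endpoints $c,c'$ lie in $K$ and may repeat across connectors, which the statement permits since only internal vertices are required to be distinct. No real obstacle remains beyond checking that the $M$-disjointness from \cref{lem:kernel-normal-form} and the definition of $\mathscr S$ supply the needed supply of disjoint members.
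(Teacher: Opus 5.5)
Your proof is correct and is essentially the paper's argument: a greedy selection in which each family supplies at least $M$ members with pairwise disjoint internal vertex sets, while the forbidden set never reaches $M/2$. The paper chooses all length-three connectors before the length-two ones, but your uniform bound $|Z|+2t\le M/2$ shows that this ordering is not needed.
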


\begin{proof}
Choose the length-three connectors first.  For each such family, use the $M$ members with pairwise disjoint internal vertex pairs.  Before any one choice, fewer than $M/2$ vertices are forbidden.  Hence fewer than $M/2$ of these paths meet the forbidden set internally, so an available path remains.  Add its two internal vertices to the forbidden set and continue.  After all length-three connectors have been chosen, choose the internal vertex of each length-two connector from its set $\cO_S$.  At every step fewer than $M$ vertices are forbidden, whereas $|\cO_S|\ge M$.  The greedy choices give the required paths.
\end{proof}

We now give the exact encoding used in the counting argument.  Put $K^\circ=K\setminus\{p,q\}$ and set $x_0=p$, $x_h=q$.  For $j\in\{0,\ldots,h-1\}$ and $t\in\{1,\ldots,h-1\}$, put
\[
(\alpha_j(t),\beta_j(t))=
\begin{cases}
(t-1,t),&t\le j,\\
(t,t+1),&t>j.
\end{cases}
\]
For $\mathbf y=(y_1,\ldots,y_{h-1})\in\cO^{h-1}$, set $T_t=T(y_t)$.  If some $T_t\notin\mathscr S$, put $C_j(\mathbf y)=\varnothing$.  Otherwise let
\[
C_j(\mathbf y)=\left\{\mathbf x\in(K^\circ)_{\ne}^{h-1}:
\{x_{\alpha_j(t)},x_{\beta_j(t)}\}\subseteq T_t
\text{ for }1\le t\le h-1\right\}.
\]
For $\mathbf x\in C_j(\mathbf y)$, put $r_0=x_j$, $r_1=x_{j+1}$, and define
\[
E_j(\mathbf y,\mathbf x)=\{(u,v)\in\cO^2:(r_0,u,v,r_1)\in\mathsf R(r_0,r_1)\}.
\]
Also define
\begin{align}
L_j^+(\mathbf y,\mathbf x)
={}&\left\{(z,w)\in K^\circ\times\cO:
\begin{array}{l}
z\notin\{x_1,\ldots,x_{h-1}\},\quad r_0z\in E(J[K]),\\
\{z,r_1\}\subseteq T(w),\quad T(w)\in\mathscr S
\end{array}
\right\}, \label{eq:Lj-plus}\\
L_{h-1}^-(\mathbf y,\mathbf x)
={}&\left\{(w,z)\in\cO\times K^\circ:
\begin{array}{l}
z\notin\{x_1,\ldots,x_{h-1}\},\quad \{r_0,z\}\subseteq T(w),\\
zr_1\in E(J[K]),\quad T(w)\in\mathscr S
\end{array}
\right\}, \label{eq:Lj-minus}
\end{align}
where $L_j^-=\varnothing$ for $j<h-1$, and put $L_j=L_j^+\cup L_j^-$.

\begin{lemma}\label{lem:canonical-reduction}
Under the hypotheses and notation of \cref{lem:kernel-normal-form},
\begin{equation}\label{eq:canonical-majorization}
P_{2h+1}(J;p,q)
\le
\sum_{j=0}^{h-1}\sum_{\mathbf y\in\cO^{h-1}}
\sum_{\mathbf x\in C_j(\mathbf y)}
\bigl(|L_j(\mathbf y,\mathbf x)|+|E_j(\mathbf y,\mathbf x)|\bigr)
+\eps N^h+CN^{h-1}.
\end{equation}
\end{lemma}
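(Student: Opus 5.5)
The plan is to construct an injection from the non-exceptional $p$--$q$ paths of length $2h+1$ into the index set on the right of \eqref{eq:canonical-majorization}. Apply \cref{lem:kernel-normal-form} with the given parameters. After discarding at most $\eps N^h+CN^{h-1}$ paths, every remaining path has form (D) or (R). For each such path we read off a quadruple: an index $j\in\{0,\ldots,h-1\}$, an outside tuple $\mathbf y\in\cO^{h-1}$, a kernel tuple $\mathbf x\in C_j(\mathbf y)$, and an element of $L_j(\mathbf y,\mathbf x)\sqcup E_j(\mathbf y,\mathbf x)$. It suffices to show that the path can be recovered from this quadruple. Then the number of remaining paths is at most the triple sum.

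\emph{Form (R).} The path is determined by $x_0=p,x_1,\ldots,x_h=q$, by the index $j$, by the length-two vertices $w_i$ for $i\ne j$, and by the length-three part $(x_j,u,v,x_{j+1})\in\mathsf R(x_j,x_{j+1})$. We relabel the $h-1$ vertices $w_i$ ($i\ne j$) in increasing order of $i$ as $y_1,\ldots,y_{h-1}$. Then $y_t=w_{t-1}$ for $t\le j$, which joins $x_{t-1},x_t$, and $y_t=w_t$ for $t>j$, which joins $x_t,x_{t+1}$. This is exactly the rule $(\alpha_j(t),\beta_j(t))$. Since $T(w_i)\in\mathscr S$ and $x_i,x_{i+1}\in T(w_i)$, and the $x_i$ are distinct elements of $K^\circ$, we get $\mathbf x\in C_j(\mathbf y)$ and $(u,v)\in E_j(\mathbf y,\mathbf x)$. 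The quadruple $(j,\mathbf y,\mathbf x,(u,v))$ recovers the path.

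\emph{Form (D).} Here there are kernel vertices $c_0,\ldots,c_{h+1}$ and a kernel edge $c_\ell c_{\ell+1}$. We merge the kernel edge with one adjacent length-two connector into a "pseudo length-three" block, which makes the (D) path look like an (R) path on $h+1$ kernel vertices.
\begin{itemize}
\item If $\ell\le h-1$, take $j=\ell$. Set $x_i=c_i$ for $i\le j$ and $x_i=c_{i+1}$ for $i\ge j+1$, so that $r_0=c_\ell$ and $r_1=c_{\ell+2}$. The deleted vertex $z=c_{\ell+1}$ and the connector $w=w_{\ell+1}$ then give $(z,w)\in L_j^+$, since $r_0z$ is a kernel edge and $\{z,r_1\}\subseteq T(w)$.
\item If $\ell=h$, the kernel edge is the last one, $c_hq$. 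Take $j=h-1$, $r_0=c_{h-1}$, $r_1=q$, $z=c_h$, $w=w_{h-1}$, which gives $(w,z)\in L_{h-1}^-$.
\end{itemize}
In both cases the remaining $h-1$ connectors, ordered as before, form $\mathbf y$ with $\mathbf x\in C_j(\mathbf y)$. The condition $z\notin\{x_1,\ldots,x_{h-1}\}$ holds because all the $c_i$ are distinct. Also $z\in K^\circ$, because $z$ is an internal vertex of the path and hence differs from $p,q$.

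The main obstacle is injectivity across the different cases and choices of $j$. I will handle it by reading the decoding backwards. An element of $E_j$ lies in $\cO^2$, whereas an element of $L_j$ contains a coordinate in $K^\circ$, so the two kinds of images are disjoint. Within $L_j$, the sets $L_j^+$ and $L_j^-$ are distinguished by the order of their coordinates, which is in $K^\circ\times\cO$ versus $\cO\times K^\circ$. The map from paths to quadruples need not be surjective, only injective. Given a quadruple, the path is rebuilt explicitly: interleave $\mathbf x$ with $\mathbf y$ according to $j$, then insert $(u,v)$, or $z,w$, between $r_0$ and $r_1$ in the prescribed order. Different paths therefore give different quadruples, and summing over $j$, $\mathbf y$ and $\mathbf x$ yields \eqref{eq:canonical-majorization}.
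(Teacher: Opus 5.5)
Your proposal is correct and follows the paper's proof essentially step for step. Paths of form (R) are encoded by reading off $\mathbf y$ and $\mathbf x$ together with $(u,v)\in E_j$. Paths of form (D) are encoded by merging the kernel edge $c_\ell c_{\ell+1}$ with the following connector, which lands in $L_\ell^+$, or, when $\ell=h$, with the preceding connector, which lands in $L_{h-1}^-$; in both cases the record reconstructs the path. The only cosmetic difference is that the paper bounds the (R)-paths and the (D)-paths separately and adds the two bounds, so it does not need your extra observation that $E_j$ and $L_j$ are disjoint.
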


\begin{proof}
Let $\mathcal P_R$ and $\mathcal P_D$ be the sets of paths of forms~\textup{(R)} and~\textup{(D)}, respectively, that remain after the exceptional paths in \cref{lem:kernel-normal-form} are removed.  Consider first a path in $\mathcal P_R$.  Its $h-1$ length-two connectors, read from $p$ to $q$, give $\mathbf y$, and its internal kernel vertices give $\mathbf x$.  The definitions give $\mathbf x\in C_j(\mathbf y)$ and $(u,v)\in E_j(\mathbf y,\mathbf x)$.  This record reconstructs the path, and hence
\[
|\mathcal P_R|
\le\sum_{j=0}^{h-1}\sum_{\mathbf y\in\cO^{h-1}}
\sum_{\mathbf x\in C_j(\mathbf y)}|E_j(\mathbf y,\mathbf x)|.
\]

Now consider a path in $\mathcal P_D$, and use the notation $c_0,\ldots,c_{h+1}$ from that lemma.  If the kernel edge is $c_\ell c_{\ell+1}$ with $\ell<h$, combine it with the following length-two connector $c_{\ell+1}wc_{\ell+2}$.  Delete $c_{\ell+1}$ from the internal kernel tuple and delete $w$ from the list of connector vertices.  The resulting element belongs to $L_\ell^+$.  If the kernel edge is the final edge $c_hq$, combine it with the preceding connector $c_{h-1}wc_h$.  The resulting element belongs to $L_{h-1}^-$.  Simplicity of the original path gives all distinctness conditions in the definitions.  The record consisting of $j$, $\mathbf y$, $\mathbf x$, and the chosen element of $L_j^+$ or $L_{h-1}^-$ uniquely reconstructs the ordered vertex sequence.  Therefore
\[
|\mathcal P_D|
\le\sum_{j=0}^{h-1}\sum_{\mathbf y\in\cO^{h-1}}
\sum_{\mathbf x\in C_j(\mathbf y)}|L_j(\mathbf y,\mathbf x)|.
\]
The paths outside $\mathcal P_R\cup\mathcal P_D$ number at most $\eps N^h+CN^{h-1}$.  Here $\eps N^h$ is the error chosen in the construction of $K$, and $CN^{h-1}$ collects all families for which at most $h-1$ source vertices remain free.  Adding the last two displays and these two error terms proves \eqref{eq:canonical-majorization}.
\end{proof}

\subsection{Odd cycles with an odd circumference bound}

Throughout this subsection $\kappa=1$, $J$ has no $p$--$q$ path of length at least $2a$, and
\[
h=s-1\ge3,\qquad r=a-2,\qquad d=a-s.
\]
We use the data from \cref{lem:kernel-normal-form} with $M=20(a+h+1)$.
Fix $\mathbf y=(y_1,\ldots,y_{h-1})\in\cO^{h-1}$ and put
\[
T_t=T(y_t)\quad(1\le t\le h-1),
\qquad
B=B(\mathbf y)=\left(\bigcup_{t=1}^{h-1}T_t\right)\setminus\{p,q\},
\qquad b=|B|.
\]
If some $T_t\notin\mathscr S$, then $C_j(\mathbf y)=\varnothing$.  We therefore assume $T_t\in\mathscr S$ for every $t$.

For $j\in\{0,\ldots,h-1\}$, put
\[
\lambda_j=
\begin{cases}
1,&0\le j<h-1,\\
2,&j=h-1.
\end{cases}
\qquad
\sum_{j=0}^{h-1}\lambda_j=h+1.
\]
The value $2$ at the final position records the two possibilities in \eqref{eq:Lj-plus} and \eqref{eq:Lj-minus}.

\subsubsection{Insertion into length-two connectors}

For $\mathbf x\in C_j(\mathbf y)$, the $t$-th connector has anchors
\begin{equation}\label{eq:block-anchors}
(\ell_t^{(j)},r_t^{(j)})=
\begin{cases}
(x_{t-1},x_t),&t\le j,\\
(x_t,x_{t+1}),&t>j.
\end{cases}
\end{equation}
Both anchors belong to $T_t$.

\begin{lemma}\label{lem:connector-insertion}
Fix $j$, $\mathbf y$, and $\mathbf x\in C_j(\mathbf y)$.  Let
\[
S\subseteq B\setminus\{x_1,\ldots,x_{h-1}\}.
\]
Every vertex of $S$ can be inserted exactly once into the $T_t$-connectors.  If the number of requested connectors and forbidden vertices is at most $M/4$, their internal vertices can be chosen to be distinct and to avoid the forbidden set.  The total length of these connectors after the insertion is
\[
2\bigl(h-1+|S|\bigr).
\]
In particular, inserting every vertex of $B\setminus\{x_1,\ldots,x_{h-1}\}$ gives total length $2b$.
\end{lemma}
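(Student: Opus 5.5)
The plan is to insert the vertices of $S$ one at a time into the chain of $T_t$-connectors, where the $t$-th connector initially runs between its anchors $\ell_t^{(j)},r_t^{(j)}$ from \eqref{eq:block-anchors}. Since every $z\in S$ lies in $B$, there is an index $t(z)$ with $z\in T_{t(z)}$. We fix such a choice for each $z\in S$, and group the vertices of $S$ by this index. If $S_t=\{z\in S:t(z)=t\}$ is nonempty, the $t$-th length-two connector $\ell_t\,y\,r_t$ is replaced by a zigzag path
\[
\ell_t\,w_0\,z_1\,w_1\,z_2\cdots z_{k}\,w_k\,r_t,
\]
where $S_t=\{z_1,\ldots,z_k\}$ and each $w_i$ is the internal vertex of a connector in $\mathsf B_{T_t}(\cdot,\cdot)$. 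This works because $\ell_t,r_t,z_1,\ldots,z_k$ all lie in $T_t\in\mathscr S$, so every consecutive pair among them is joined by length-two connectors through $\cO_{T_t}$. The segment has length $2(k+1)$, so it contributes $2|S_t|$ more edges than the original connector. Connectors with $S_t=\varnothing$ keep length $2$. Summing over $t$ gives total length $2(h-1)+2\sum_t|S_t|=2(h-1+|S|)$.

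Each vertex of $S$ is inserted exactly once because the sets $S_t$ partition $S$. The inserted kernel vertices are distinct from the anchors because $S$ avoids $\{x_1,\ldots,x_{h-1}\}$ and $S\subseteq B$ excludes $p,q$. For the connector internal vertices, we apply \cref{lem:connector-selection} to the list of requested families $\mathsf B_{T_t}(c,c')$, with $Z$ equal to the forbidden set. The lemma's hypothesis $t+|Z|\le M/4$ is exactly the assumption in the statement, so it returns connectors whose internal vertices are pairwise distinct and avoid $Z$. We take $Z$ to contain the kernel vertices already on the path and any other forbidden vertices. These internal vertices lie in $\cO$, hence outside $K$, so they also differ from all kernel vertices used. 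The result is a simple path segment.

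For the final sentence, take $S=B\setminus\{x_1,\ldots,x_{h-1}\}$. We need $|S|=b-(h-1)$, i.e.\ that $x_1,\ldots,x_{h-1}$ all lie in $B$. This holds because each $x_i$ with $1\le i\le h-1$ is an anchor of some connector, so $x_i\in T_t$ for some $t$, and $x_i\in K^\circ$ is distinct from $p,q$. The total length is then $2(h-1+b-h+1)=2b$.

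The only delicate point is bookkeeping: the count of requested connectors is $(h-1)+|S|$, and this count plus the forbidden set must stay below $M/4$. Since $|S|\le b\le|K|$ and $M=20(a+h+1)$, the statement builds this in as a hypothesis, so the zigzag construction itself carries no real difficulty.
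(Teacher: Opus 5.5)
Your proposal is correct and is essentially the paper's proof: you assign each $z\in S$ to a type $T_{t(z)}$ containing it, thread $S_t$ between the anchors of the $t$-th connector, count $(h-1)+|S|$ length-two connectors, and invoke \cref{lem:connector-selection} for distinctness. Your check that $x_i$ is an anchor of connector $i$, and hence lies in $B$, makes explicit what the paper leaves implicit in the final sentence. One small tidy-up: do not put the kernel vertices into $Z$, since that is unnecessary (the connector internal vertices lie in $\cO$ anyway, as you note) and could push $t+|Z|$ past the $M/4$ that the hypothesis grants.
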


\begin{proof}
For each $z\in S$, choose an index $t(z)$ with $z\in T_{t(z)}$, and put
\[
S_t=\{z\in S:t(z)=t\}.
\]
Order $S_t$ as $z_{t,1},\ldots,z_{t,m_t}$.  Replace the $t$-th anchor pair in \eqref{eq:block-anchors} by the kernel sequence
\[
\ell_t^{(j)},z_{t,1},\ldots,z_{t,m_t},r_t^{(j)}.
\]
Every vertex in this sequence belongs to $T_t$.  Hence consecutive kernel vertices can be joined by members of the corresponding sets $\mathsf B_{T_t}$.  The required internal vertices are distinct by \cref{lem:connector-selection}.  There are
\[
\sum_{t=1}^{h-1}(m_t+1)=|S|+h-1
\]
length-two connectors, proving the length formula.
\end{proof}

\subsubsection{The direct terms}

\begin{lemma}\label{lem:conditional-direct}
For every fixed $\mathbf y\in\cO^{h-1}$,
\begin{equation}\label{eq:conditional-direct}
\sum_{j=0}^{h-1}\sum_{\mathbf x\in C_j(\mathbf y)}
|L_j(\mathbf y,\mathbf x)|
\le(h+1)\fall r h\,N.
\end{equation}
\end{lemma}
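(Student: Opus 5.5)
The plan is to fix $\mathbf y$ and $j$, and count pairs $(\mathbf x,\ell)$ with $\mathbf x\in C_j(\mathbf y)$ and $\ell\in L_j(\mathbf y,\mathbf x)$. Each such pair determines, through \cref{lem:connector-insertion}, an ordered sequence of $h+1$ distinct internal kernel vertices. For $L_j^+$ this sequence is $x_1,\ldots,x_j,z,x_{j+1},\ldots,x_{h-1}$, and for $L_{h-1}^-$ it is the analogous sequence with $z$ placed before $q$. In every case the sequence is joined by $h$ length-two connectors of types in $\mathscr S$ and a single kernel edge. Every kernel vertex in the sequence other than $p,q$ lies in $B\cup\{z\}$, where $z$ is a neighbour of a kernel vertex. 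The bound $(h+1)\fall rh N$ should then split into two factors. The factor $N$ counts the free outside vertex $w$, since $|\cO|\le N$. The factor $(h+1)\fall rh$ comes from $\sum_j\lambda_j=h+1$ multiplied by at most $\fall rh$ admissible kernel tuples for each position and each choice of $w$.

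\emph{Step 1: reduce to $|B'|\le r$.} Put $B'=B\cup(T(w)\setminus\{p,q\})$, the set of non-root kernel vertices available to the path. Suppose $|B'|\ge r+1=a-1$. Build the path $p,\ldots,r_0,z,\ldots,q$, which has the kernel edge $r_0z$ and length-two connectors elsewhere. Then insert every vertex of $B'$ not already used into the connectors, using \cref{lem:connector-insertion} together with the connector of type $T(w)$. The total length becomes at least $1+2(|B'|+1)-?$. The aim is to show that this produces a $p$--$q$ path of length at least $2a$, because the vertices $p,q$ and the $|B'|\ge a-1$ inner kernel vertices are separated by connectors of length $2$, except for one edge. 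That contradicts the hypothesis on $J$. We have $M=20(a+h+1)$, so the greedy selection of \cref{lem:connector-selection} applies, since at most $O(a+h)$ connectors and forbidden vertices are involved. Hence $|B'|\le r$ whenever $L_j\neq\varnothing$.

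\emph{Step 2: count.} Given $j$ and $w$ (at most $N$ choices), the internal kernel vertices $x_1,\ldots,x_{h-1},z$ form $h$ distinct elements of $B'$, so they take at most $\fall{|B'|}{h}\le\fall rh$ values. The pair $(\mathbf x,(z,w))$ is determined by these data, and for $j=h-1$ the two forms $L^\pm$ contribute the factor $\lambda_{h-1}=2$. Summing over $j$ with weights $\lambda_j$ gives $(h+1)\fall rhN$.

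\emph{Main obstacle.} The hard part is the length bookkeeping in Step 1. The vertex $z$ may lie outside $B$, so $B'$ must be defined to include $z$, and every inserted vertex must be adjacent through some $T_t$ or $T(w)$ connector. The parity and the off-by-one count must give length exactly $\ge 2a$. If $|B\cup\{z\}|\ge r+1$, the path has $h+1+(\text{extra})$ inner kernel vertices, one kernel edge and all other steps of length $2$, so its length is $2(|B\cup\{z\}|+1)-1\ge 2a-1$. This is one short of $2a$. I would recover the missing unit by restricting $B'$ to $B\cup\{z\}$ and using that vertices of $T(w)$ can be inserted as well, or by invoking \cref{lem:rooted-shortening}-type rerouting through an $\mathsf R$ connector. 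I expect this off-by-one, together with the exact value $r=a-2$, to be where the care is needed.
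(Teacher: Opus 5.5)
\textbf{There is a genuine gap: Step 1 is false.} Inserting vertices only rules out $m:=|B'|\ge r+2=a$, since then $a-h$ insertions give length $2h+1+2(a-h)=2a+1$. At $m=r+1$ the best you get is length $2a-1$, which the hypothesis allows, and this case really occurs.

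For example, take $h=3$ and $a=5$. Let $J$ be a large independent set completely joined to $\{p,k_1,\dots,k_4\}$, together with the pendant edge $k_4q$. Then:
\begin{itemize}
\item $\circum(J)=10\le 2a+1$, and every $p$--$q$ path has length at most $9<2a$;
\item the kernel is $\{p,q,k_1,\dots,k_4\}$ with $\mathscr P=\varnothing$;
\item for $T_1=T_2=\{p,k_1,\dots,k_4\}$, the set $L_2^-$ is nonempty (with $z=k_4$), while $B'=\{k_1,\dots,k_4\}$ has $r+1$ elements.
\end{itemize}
Neither remedy you suggest helps. All of $T(w)$ is already in $B'$, and there need not be any length-three connector (here there are none). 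Left as is, Step 2 gives $\fall{r+1}h$ tuples at each of the $h+1$ positions. That totals $(h+1)\fall{r+1}hN$, which exceeds the claimed bound.

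\textbf{The missing idea is to count jointly over positions when $m=r+1$.} For fixed $w$, a member at a given position is determined by its ordered internal kernel tuple $\mathbf c=(c_1,\dots,c_h)$. Suppose the same $\mathbf c$ arose at two positions $\ell<k$.
\begin{itemize}
\item Keep the type assignment of position $\ell$ on all consecutive pairs of $p,c_1,\dots,c_h,q$ except $c_\ell c_{\ell+1}$.
\item Give $c_\ell c_{\ell+1}$ the type that position $k$ assigns to it.
\end{itemize}
Now all $h+1$ pairs are joined by length-two connectors; a type used twice is harmless by \cref{lem:connector-selection}. Every one of $T_1,\dots,T_{h-1},T(w)$ is still used, so the remaining $m-h$ vertices of $B'$ can be inserted. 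This gives $m+1=a$ length-two connectors, hence a $p$--$q$ path of length $2a$, a contradiction.

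So all positions together carry at most $\fall{r+1}h$ tuples. Moreover $\fall{r+1}h/\fall rh=(a-1)/(a-s)\le s=h+1$, which is equivalent to $(s-1)(a-s-1)\ge0$. Combine this with the correct bound $m\le r+1$, the easy case $m\le r$ (at most $\fall rh$ tuples per position), and $\sum_S|\cO_S|\le N$. This yields the lemma, and it is exactly the paper's route.
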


\begin{proof}
Partition the vertices $w$ in \eqref{eq:Lj-plus}--\eqref{eq:Lj-minus} according to $S=T(w)\in\mathscr S$.
Fix one such $S$ and put
\[
W=S\cup T_1\cup\cdots\cup T_{h-1},
\qquad m=|W\setminus\{p,q\}|.
\]
We count the possible extended kernel tuples for one vertex $w\in\cO_S$.

There are $h+1$ direct positions indexed by $\ell\in\{0,\ldots,h\}$.  For $0\le\ell\le h-1$, position $\ell$ corresponds to $L_\ell^+$.  Position $h$ corresponds to $L_{h-1}^-$.  A member at position $\ell<h$, with tuple $\mathbf x$ and additional kernel vertex $z$, determines the ordered internal kernel tuple
\[
(c_1,\ldots,c_h)=
(x_1,\ldots,x_\ell,z,x_{\ell+1},\ldots,x_{h-1}).
\]
For position $h$, put
\[
(c_1,\ldots,c_h)=(x_1,\ldots,x_{h-1},z).
\]
In every case the $c_i$ are distinct members of $W\setminus\{p,q\}$.  For a fixed position, the map to $\mathbf c$ is injective.

Set $c_0=p$ and $c_{h+1}=q$.  At position $\ell$, $c_\ell c_{\ell+1}$ is the kernel edge.  Every other consecutive pair is assigned one of the $h$ types
\[
T_1,\ldots,T_{h-1},S,
\]
and each type is used exactly once.  More explicitly, for $\ell<h$ the ordered type list is
\[
T_1,\ldots,T_\ell,S,T_{\ell+1},\ldots,T_{h-1}.
\]
types before the kernel edge are assigned to the preceding pair, and types after it are assigned to the following pair.  For position $h$, the list is $T_1,\ldots,T_{h-1},S$ and every type is assigned to the preceding pair.

Suppose such a direct path exists.  If $m\ge a$, choose $a-h$ vertices from
\[
W\setminus\bigl(\{p,q\}\cup\{c_1,\ldots,c_h\}\bigr).
\]
Assign each chosen vertex to one of the $h$ types that contains it and insert it into the corresponding connector.  The initial path has $h$ length-two connectors and one kernel edge.  Thus the new path has length $2h+1+2(a-h)=2a+1$.  It requests $a$ length-two connectors, so \cref{lem:connector-selection} makes their internal vertices distinct.  This contradicts the hypothesis.  Hence $m\le a-1=r+1$.

If $m\le r$, then each of the $h+1$ positions has at most $\fall m h\le\fall r h$ ordered kernel tuples, and hence their total number for the fixed type $S$ is at most
\begin{equation}\label{eq:direct-m-r}
(h+1)\fall r h.
\end{equation}

It remains to consider $m=r+1$.  We claim that a fixed ordered tuple
\[
\mathbf c=(c_1,\ldots,c_h)\in(W\setminus\{p,q\})_{\ne}^{h}
\]
cannot occur at two positions.  Suppose that positions $\ell<k$ both occur.  Keep the type assignment from position $\ell$ on all consecutive pairs except $c_\ell c_{\ell+1}$.  Since $k\ne\ell$, position $k$ assigns a type to that pair.  Thus every consecutive pair of
\[
p,c_1,\ldots,c_h,q
\]
is assigned one of the types $T_1,\ldots,T_{h-1},S$.  The assignment from position $\ell$ already uses every one of these types.  Insert every remaining vertex of $W\setminus\{p,q\}$ into a connector whose type contains it.  By \cref{lem:connector-selection}, the internal vertices can be chosen to be distinct.  The completed path has
\[
h+1+(m-h)=m+1=r+2=a
\]
length-two connectors and therefore length $2a$, a contradiction.  Hence each ordered kernel tuple occurs at most once, and their number is at most $\fall{r+1}h$.
Finally,
\[
\frac{\fall{r+1}h}{\fall r h}
=\frac{r+1}{r-h+1}
=\frac{a-1}{a-s}
\le h+1,
\]
where the last inequality is equivalent to $(s-1)(a-s-1)\ge0$.
Thus \eqref{eq:direct-m-r} also holds when $m=r+1$.

For the type $S$, every extended kernel tuple has at most $|\cO_S|$ choices for $w$.  Summing over $S\in\mathscr S$ and using
\[
\sum_S|\cO_S|\le N
\]
proves \eqref{eq:conditional-direct}.
\end{proof}

Recall that $b=|B|$, where $B=(T_1\cup\cdots\cup T_{h-1})\setminus\{p,q\}$.  Thus $b$ is the number of kernel vertices available in the types used by $\mathbf y$.  We split the estimate into the cases $b\le r-1$, $b=r$, and $b\ge r+1$.

\subsubsection{The case \texorpdfstring{$b\le r-1$}{b <= r - 1}}

Fix $j$.  Let $L_j^{\rm in}$ be the set of members of $L_j$ whose additional kernel vertex $z$ belongs to $B$, and put $L_j^{\rm out}=L_j\setminus L_j^{\rm in}$.

\begin{lemma}\label{lem:noncritical-local}
If $b\le r$, then, with $\lambda_j=1$ for $j<h-1$ and $\lambda_{h-1}=2$,
\begin{align}
&\sum_{\mathbf x\in C_j(\mathbf y)}
\bigl(|L_j(\mathbf y,\mathbf x)|+|E_j(\mathbf y,\mathbf x)|\bigr)\notag\\
&\qquad\le
\left\{
\lambda_j\fall b h+(2(r-b)+1)\fall b{h-1}
\right\}N+O_{a,h,\eps}(1). \label{eq:Phi-local}
\end{align}
\end{lemma}

\begin{proof}
Every $\mathbf x\in C_j(\mathbf y)$ uses $h-1$ distinct vertices of $B$.  Hence
\begin{align}
\sum_{\mathbf x\in C_j(\mathbf y)}|L_j^{\rm in}(\mathbf y,\mathbf x)|
&\le \lambda_j\bigl(b-(h-1)\bigr)\fall b{h-1}N\notag\\
&=\lambda_j\fall b h\,N. \label{eq:Lin-final}
\end{align}
Here $\lambda_j$ counts the possible positions of the kernel edge, $\fall b{h-1}$ counts the ordered internal kernel vertices already used, $b-(h-1)$ counts the remaining choice of $z$, and $N$ bounds the choice of the outside vertex.

Fix $\mathbf x\in C_j(\mathbf y)$ and form a rooted graph $R_{j,\mathbf y,\mathbf x}$ with roots $r_0=x_j$ and $r_1=x_{j+1}$.  For every member of $L_j^{\rm out}$, add the rooted path $r_0-z-w-r_1$ or $r_0-w-z-r_1$.  For every member of $E_j$, add $r_0-u-v-r_1$.  All internal vertices lie outside $B$, and every internal edge is the middle edge of one of these rooted paths of length three.

Put $L_0=2a-2b$.
If the rooted graph contained an $r_0$--$r_1$ path of length at least $L_0$, then \cref{lem:rooted-shortening} would give one of length $L_0$ or $L_0+1$.  Its length is bounded in terms of $a$.  By \cref{lem:connector-selection}, the length-two connectors in \cref{lem:connector-insertion} can be chosen outside this path.  Inserting all vertices of $B$ would then produce a $p$--$q$ path of length at least
\[
2b+L_0=2a,
\]
a contradiction.  Thus the rooted graph has no path of length at least $L_0$.  Applying \cref{lem:rooted-p3} with $D=L_0-1$ gives
\[
|L_j^{\rm out}(\mathbf y,\mathbf x)|+|E_j(\mathbf y,\mathbf x)|
\le(2a-2b-3)N+O_{a,h,\eps}(1).
\]
Since $2a-2b-3=2(r-b)+1$ and $|C_j(\mathbf y)|\le\fall b{h-1}$,
combining this estimate with \eqref{eq:Lin-final} proves \eqref{eq:Phi-local}.
\end{proof}

\begin{lemma}\label{lem:noncritical-arithmetic}
Let $h\ge3$, $\lambda\in\{1,2\}$, and $b\le r-1$.  Then
\begin{equation}\label{eq:Phi-arithmetic}
\lambda\fall b h+(2(r-b)+1)\fall b{h-1}
\le\lambda\fall r h.
\end{equation}
\end{lemma}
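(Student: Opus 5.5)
The plan is to treat the left-hand side of \eqref{eq:Phi-arithmetic} as a function of $b$ and show that it does not decrease as $b$ grows, so that it suffices to check the largest allowed value $b=r-1$. Fix $h\ge3$, $\lambda\in\{1,2\}$ and $r$, and put
\[
f(b)=\lambda\fall b h+(2(r-b)+1)\fall b{h-1}\qquad(0\le b\le r-1).
\]
The key identity throughout is the discrete difference $\fall{b+1}{k}-\fall b k=k\,\fall b{k-1}$, valid for all nonnegative integers $b$ and $k\ge1$. It holds even when some of these factorials vanish.

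\emph{Step 1 (monotonicity).} For $0\le b\le r-2$ I will compute
\[
f(b+1)-f(b)=\lambda h\fall b{h-1}+(2(r-b)-1)\fall{b+1}{h-1}-(2(r-b)+1)\fall b{h-1}.
\]
Writing $\fall{b+1}{h-1}=\fall b{h-1}+(h-1)\fall b{h-2}$, the right-hand side becomes
\[
(\lambda h-2)\fall b{h-1}+(2(r-b)-1)(h-1)\fall b{h-2}.
\]
Both summands are nonnegative, since $h\ge3$ gives $\lambda h-2\ge1$ and $b\le r-2$ gives $2(r-b)-1>0$. Hence $f$ is nondecreasing on $\{0,\ldots,r-1\}$. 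Small values $b\le h-2$, where the falling factorials vanish, need no separate treatment.

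\emph{Step 2 (the endpoint).} It remains to show $f(r-1)\le\lambda\fall r h$. By the same difference identity,
\[
\lambda\fall r h-f(r-1)=\lambda h\fall{r-1}{h-1}-3\fall{r-1}{h-1}=(\lambda h-3)\fall{r-1}{h-1}\ge0,
\]
again because $h\ge3$ and $\lambda\ge1$. Combined with Step 1, this gives $f(b)\le f(r-1)\le\lambda\fall r h$ for every $b\le r-1$, which is \eqref{eq:Phi-arithmetic}.

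The only delicate point is where $h\ge3$ enters. It is needed in Step 1 to absorb the loss $2\fall b{h-1}$, which comes from the coefficient $2(r-b)+1$ dropping by $2$ at each step. It is also needed in Step 2 to absorb the final coefficient $3$. For $h=2$ and $\lambda=1$ the endpoint inequality would fail, which is consistent with the paper treating the case $s=3$ separately. Otherwise the argument is routine bookkeeping with falling factorials.
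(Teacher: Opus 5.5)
Your proof is correct, and it takes a somewhat different route from the paper's. The paper does not treat the left-hand side as a function of $b$. It puts $t=r-b\ge1$ and telescopes $\fall r h-\fall b h=\sum_{u=b}^{r-1}h\fall u{h-1}$. It then bounds every summand below by $h\fall b{h-1}$, using that $\fall u{h-1}$ is nondecreasing in $u$. What remains is to compare the scalar coefficients $\lambda ht$ and $2t+1$: for $\lambda=1$ this is $(h-2)t\ge1$, and for $\lambda=2$ it is trivial. That route is a single inequality and uses the difference identity once, at the cost of a crude lower bound on the increments.

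Your route has two steps. First you show $f(b+1)-f(b)=(\lambda h-2)\fall b{h-1}+(2(r-b)-1)(h-1)\fall b{h-2}\ge0$, then you check the endpoint $b=r-1$. This uses the identity twice, but it gives sharper information. The worst case is $b=r-1$, where the slack is exactly $(\lambda h-3)\fall{r-1}{h-1}$. This shows clearly that $h\ge3$ is only needed when $\lambda=1$; for $h=2$ and $\lambda=2$ both arguments still go through.
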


\begin{proof}
Put $t=r-b\ge1$.  The falling-factorial difference satisfies
\begin{align}
\fall r h-\fall b h
&=\sum_{u=b}^{r-1}\bigl(\fall{u+1}h-\fall u h\bigr)\notag\\
&=\sum_{u=b}^{r-1}h\fall u{h-1}
\ge ht\fall b{h-1}. \label{eq:fall-difference-final}
\end{align}
For $\lambda=1$,
\[
ht-(2t+1)=(h-2)t-1\ge0.
\]
For $\lambda=2$, $2ht\ge2t+1$.  Multiplying \eqref{eq:fall-difference-final} by $\lambda$ proves \eqref{eq:Phi-arithmetic}.
\end{proof}

Consequently, by \cref{lem:noncritical-arithmetic}, if $b\le r-1$, then
\begin{equation}\label{eq:noncritical-total}
\sum_{j=0}^{h-1}\sum_{\mathbf x\in C_j(\mathbf y)}
\bigl(|L_j|+|E_j|\bigr)
\le(h+1)\fall r h\,N+O_{a,h,\eps}(1).
\end{equation}

\subsubsection{The case \texorpdfstring{$b=r$}{b = r}}

Throughout this subsection, assume $b=r$.
For $1\le t\le h-1$, define the private part of $T_t$ by
\[
\operatorname{Priv}(T_t)=
\left(T_t\setminus\{p,q\}\right)
\setminus\bigcup_{\substack{1\le u\le h-1\\u\ne t}}T_u.
\]
These private parts are pairwise disjoint.

For an ordered injective tuple
\[
\mathbf x=(x_1,\ldots,x_{h-1})\in B_{\ne}^{h-1},
\]
put $M(\mathbf x)=B\setminus\{x_1,\ldots,x_{h-1}\}$.
By \eqref{eq:h-r-d} and the assumption $b=r$,
\begin{equation}\label{eq:omitted-size}
|M(\mathbf x)|=r-(h-1)=d.
\end{equation}
Let $\mathcal A(\mathbf x)=\{j\in\{0,\ldots,h-1\}:\mathbf x\in C_j(\mathbf y),\ E_j(\mathbf y,\mathbf x)\ne\varnothing\}$.

\begin{lemma}\label{lem:residual-one-position}
For every $j$ and every $\mathbf x\in C_j(\mathbf y)$,
\begin{equation}\label{eq:residual-per-position}
|E_j(\mathbf y,\mathbf x)|\le N+O_{a,h,\eps}(1).
\end{equation}
\end{lemma}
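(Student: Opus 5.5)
Since $b=r$, \cref{lem:connector-insertion} lets us insert all $d$ omitted vertices of $M(\mathbf x)$ into the length-two connectors. This adds total connector length $2b=2r$, which leaves room for only a very short rooted path between $r_0=x_j$ and $r_1=x_{j+1}$. I would build the rooted graph $R$ with roots $r_0,r_1$ whose edges are exactly the members $(r_0,u,v,r_1)$ of $E_j(\mathbf y,\mathbf x)$. Every internal edge of $R$ is then the middle edge of a rooted length-three path, and all internal vertices lie in $\cO$, hence outside $B$.

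Suppose $R$ contained an $r_0$--$r_1$ path of length at least $L_0:=2a-2b=2a-2r=4$. By \cref{lem:rooted-shortening} it would contain one of length $4$ or $5$. Its internal vertices are at most $4$ vertices of $\cO$. The number of requested connectors is $h-1+d\le a$, and together with at most $a+4$ forbidden vertices this stays well below $M/4=5(a+h+1)$. So \cref{lem:connector-selection} lets us choose the connectors of \cref{lem:connector-insertion} disjoint from this path and from $K$-vertices used elsewhere. Concatenating gives a simple $p$--$q$ path of length at least $2b+4=2a$, contradicting the standing hypothesis that $J$ has no $p$--$q$ path of length at least $2a$. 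Hence every $r_0$--$r_1$ path in $R$ has length at most $3$.

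I would then apply \cref{lem:rooted-p3} with $D=3$. This gives $|E_j(\mathbf y,\mathbf x)|=P_3(R;r_0,r_1)\le (D-2)(M'-2)\le v(R)$, where $M'=v(R)\le N+2$. That is the claimed bound $N+O_{a,h,\eps}(1)$; in fact the error is at most an additive constant.

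The main point to check is the length bookkeeping in the second paragraph. Inserting \emph{every} vertex of $B\setminus\{x_1,\dots,x_{h-1}\}$ gives connector length exactly $2b$ by \cref{lem:connector-insertion}. Here $r_0,r_1$ are among $x_0,\dots,x_h$ and are not used as insertion vertices. The rooted segment replaces the $x_j$--$x_{j+1}$ gap, which in the encoding of $C_j(\mathbf y)$ is the one pair not covered by a $T_t$-connector. So the total is $2b+\ell$ with $\ell\ge4$, as claimed. One must also confirm that $x_j,x_{j+1}$ may equal $p$ or $q$ without harm. This is fine, since they are roots and are never reused internally.
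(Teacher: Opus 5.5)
Your proof is correct and is essentially the paper's argument: you take the rooted graph built from the members of $E_j(\mathbf y,\mathbf x)$, rule out root-to-root paths of length at least $4$ via \cref{lem:rooted-shortening} plus insertion of all of $B$ (connector length $2b=2r$, giving a $p$--$q$ path of length at least $2a$), and then apply \cref{lem:rooted-p3} with $D=3$. One cosmetic correction: you should write $|E_j(\mathbf y,\mathbf x)|\le P_3(R;r_0,r_1)$ rather than equality, because $R$ may contain rooted length-three paths that are not members of $\mathsf R(r_0,r_1)$; this is harmless for the upper bound.
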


\begin{proof}
Use only the length-three rooted paths in the rooted graph from the proof of \cref{lem:noncritical-local}.  If it contained a root-to-root path of length at least $4$, \cref{lem:rooted-shortening} would give one of length $4$ or $5$.  Inserting all $b=r$ vertices of $B$ would then give a $p$--$q$ path of length at least $2r+4=2a$.
Thus no rooted path has length at least $4$.  Applying \cref{lem:rooted-p3} with $D=3$ proves \eqref{eq:residual-per-position}.
\end{proof}

\begin{lemma}\label{lem:private-obstruction}
Let $j<k$ and suppose $j,k\in\mathcal A(\mathbf x)$.  Then
\begin{equation}\label{eq:private-obstruction}
M(\mathbf x)\cap\operatorname{Priv}(T_k)\ne\varnothing.
\end{equation}
\end{lemma}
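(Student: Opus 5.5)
The plan is to argue by contradiction, in the same way as the proofs of \cref{lem:conditional-direct} and \cref{lem:residual-one-position}. Suppose $j<k$ both lie in $\mathcal A(\mathbf x)$ but $M(\mathbf x)\cap\operatorname{Priv}(T_k)=\varnothing$. I will build a $p$--$q$ path of length exactly $2a$. This contradicts the standing hypothesis that $J$ has no $p$--$q$ path of length at least $2a$.

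\emph{Step 1: the anchors from both positions.} Since $\mathbf x\in C_j(\mathbf y)\cap C_k(\mathbf y)$, I compare the anchors \eqref{eq:block-anchors} for the two positions. For $t\le j$ and for $t>k$ the two positions give the same anchors. For $j<t\le k$, position $j$ gives $\{x_t,x_{t+1}\}\subseteq T_t$ and position $k$ gives $\{x_{t-1},x_t\}\subseteq T_t$. Hence for these $t$, the type $T_t$ contains $x_{t-1},x_t,x_{t+1}$.

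\emph{Step 2: the skeleton.} Consider the kernel sequence $x_0=p,x_1,\ldots,x_h=q$, which has $h$ consecutive pairs $(x_i,x_{i+1})$ for $0\le i\le h-1$. The pairs $j$ and $k$ are distinct because $j<k$. Since $E_j$ and $E_k$ are nonempty, both $(x_j,x_{j+1})$ and $(x_k,x_{k+1})$ lie in $\mathscr P$, so each carries an $M$-disjoint family $\mathsf R$ of length-three connectors. I join these two pairs by members of $\mathsf R(x_j,x_{j+1})$ and $\mathsf R(x_k,x_{k+1})$. The remaining pairs are joined by length-two connectors, with types assigned as follows:
\begin{itemize}
\item pair $t-1$ gets $T_t$ for $t\le j$;
\item pair $t$ gets $T_t$ for $j<t<k$ (this is valid by position $j$);
\item pair $t$ gets $T_t$ for $t>k$.
\end{itemize}
This uses every type except $T_k$. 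The skeleton has length $2(h-2)+3+3=2h+2$.

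\emph{Step 3: inserting the omitted vertices.} Each vertex of $M(\mathbf x)\subseteq B$ lies in some $T_t$. By the assumption, it is not private to $T_k$, so it lies in some $T_t$ with $t\ne k$. I insert it into the connector carrying that type, exactly as in \cref{lem:connector-insertion}; each insertion adds $2$ to the length. By \eqref{eq:omitted-size} there are $d$ such vertices. The resulting length is
\[
2h+2+2d=2(r+1)+2=2a,
\]
since $h+d=r+1$ and $r=a-2$.

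\emph{Step 4: simplicity.} The inserted vertices lie in $B\setminus\{p,q\}$ and avoid $x_1,\ldots,x_{h-1}$, so all kernel vertices are distinct. The number of requested connectors is at most about $a+2$. This is well below $M/4$ for $M=20(a+h+1)$, even with the kernel vertices forbidden. So \cref{lem:connector-selection} lets me choose all internal outside vertices distinct and disjoint from $K$, including fresh members of the two $\mathsf R$-families rather than the specific ones witnessing $E_j,E_k\ne\varnothing$.

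I expect the only delicate point to be the type bookkeeping in Step 2. The essential observation is that in the range $j<t\le k$ each type can serve either of two adjacent pairs. Because of this, gaining an extra length-three connector frees exactly the type $T_k$, and the vertices forced to be lost are precisely those private to $T_k$.
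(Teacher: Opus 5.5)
Your proposal is correct and follows essentially the same route as the paper: keep the position-$j$ connectors with a length-three connector at pair $j$, replace the $T_k$-connector at pair $k$ by a member of $\mathsf R(x_k,x_{k+1})$, reinsert the $d$ vertices of $M(\mathbf x)$ through types other than $T_k$, and obtain a $p$--$q$ path of length $2(h-2+d)+6=2a$. Your Step~1 is not actually used, since your type assignment is just the position-$j$ assignment with $T_k$ removed. No forbidden set is needed in \cref{lem:connector-selection}, because connector internal vertices lie in $\cO$ by definition.
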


\begin{proof}
Suppose instead that $M(\mathbf x)\cap\operatorname{Priv}(T_k)=\varnothing$.  Use the $h-1$ length-two connectors determined by position $j$, together with one length-three connector from $x_j$ to $x_{j+1}$.  Since $k>j$, the $T_k$-connector joins $x_k$ to $x_{k+1}$, where $x_h=q$.  Delete this connector and use a length-three connector from $x_k$ to $x_{k+1}$.

Every vertex $z\in M(\mathbf x)$ can be assigned to a remaining type.  Indeed, if $z\notin T_k$, choose any $T_t$ containing $z$.  If $z\in T_k$, then $z\notin\operatorname{Priv}(T_k)$, so $z\in T_t$ for some $t\ne k$.  Insert all vertices of $M(\mathbf x)$ into their assigned connectors.

The two nonempty sets $E_j$ and $E_k$ come from retained length-three connector families.  By \cref{lem:connector-selection}, the two length-three connectors and all length-two connectors can be chosen with distinct internal vertices.  The resulting simple $p$--$q$ path contains $h-2+d$ length-two connectors and two length-three connectors.  Its length is
\[
2(h-2+d)+3+3=2(h+d+1)=2a,
\]
a contradiction.  Hence \eqref{eq:private-obstruction} holds.
\end{proof}

\begin{lemma}\label{lem:active-position-count}
For every $\mathbf x\in B_{\ne}^{h-1}$,
\begin{equation}\label{eq:active-count}
|\mathcal A(\mathbf x)|\le\min\{h,d+1\}.
\end{equation}
\end{lemma}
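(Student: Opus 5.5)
The bound $|\mathcal A(\mathbf x)|\le h$ is immediate, since $\mathcal A(\mathbf x)\subseteq\{0,\ldots,h-1\}$. The content is therefore the bound $|\mathcal A(\mathbf x)|\le d+1$, which I would derive from \cref{lem:private-obstruction} by exploiting that $M(\mathbf x)$ is small and the private parts are pairwise disjoint.

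List the active positions in increasing order as $j_0<j_1<\cdots<j_m$, so that $|\mathcal A(\mathbf x)|=m+1$. For each $i\ge1$, apply \cref{lem:private-obstruction} to the pair $j_0<j_i$; both lie in $\mathcal A(\mathbf x)$, so
\[
M(\mathbf x)\cap\operatorname{Priv}(T_{j_i})\ne\varnothing .
\]
Each $j_i$ with $i\ge1$ satisfies $1\le j_i\le h-1$, so $T_{j_i}$ is one of the genuine types and its private part is defined. Choose a witness $z_i\in M(\mathbf x)\cap\operatorname{Priv}(T_{j_i})$ for each $i$.

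The indices $j_1,\ldots,j_m$ are distinct, and the sets $\operatorname{Priv}(T_t)$ are pairwise disjoint. Hence the witnesses $z_1,\ldots,z_m$ are distinct elements of $M(\mathbf x)$. By \eqref{eq:omitted-size}, $|M(\mathbf x)|=d$, so $m\le d$ and $|\mathcal A(\mathbf x)|=m+1\le d+1$.

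The main point to verify is that \cref{lem:private-obstruction} genuinely applies with $j=j_0$ and $k=j_i$. Its hypotheses are only $j<k$ and $j,k\in\mathcal A(\mathbf x)$, and membership in $\mathcal A(\mathbf x)$ already includes $\mathbf x\in C_j(\mathbf y)$ and $\mathbf x\in C_k(\mathbf y)$. Using the smallest active index $j_0$ as the common partner ensures that every other active index plays the role of $k$, which is what produces $m$ distinct private witnesses. There is no further obstacle.
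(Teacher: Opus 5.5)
Your proof is correct and is essentially the paper's argument: fix the smallest active index $j_0$, apply \cref{lem:private-obstruction} to each other active index to get a witness in $M(\mathbf x)$, use the pairwise disjointness of the private parts and $|M(\mathbf x)|=d$, and bound by $h$ because there are only $h$ positions. The only thing you leave implicit is the trivial case $\mathcal A(\mathbf x)=\varnothing$, which the paper dismisses separately before taking the minimum.
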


\begin{proof}
If $\mathcal A(\mathbf x)=\varnothing$, there is nothing to prove.  Otherwise let $j_0=\min\mathcal A(\mathbf x)$.  For every $k\in\mathcal A(\mathbf x)\setminus\{j_0\}$, \cref{lem:private-obstruction} gives $M(\mathbf x)\cap\operatorname{Priv}(T_k)\ne\varnothing$.  The private parts are pairwise disjoint, so different values of $k$ require different vertices of $M(\mathbf x)$.  By \eqref{eq:omitted-size},
\[
|\mathcal A(\mathbf x)|-1\le d.
\]
There are only $h$ positions, proving \eqref{eq:active-count}.
\end{proof}

Put
\[
\mu_{a,s}=\min\{s-1,a-s+1\}=\min\{h,d+1\}.
\]
By \cref{lem:residual-one-position,lem:active-position-count},
\begin{align}
\sum_{j=0}^{h-1}\sum_{\mathbf x\in C_j(\mathbf y)}
|E_j(\mathbf y,\mathbf x)|
&\le \mu_{a,s}|B_{\ne}^{h-1}|N+O_{a,h,\eps}(1)\notag\\
&=\mu_{a,s}\fall r{h-1}N+O_{a,h,\eps}(1). \label{eq:critical-residual-total}
\end{align}

If $b\ge r+1$, then every set $E_j(\mathbf y,\mathbf x)$ is empty.  Indeed, \cref{lem:connector-insertion} together with one length-three connector would give a path of length
$2r+5>2a$: use the $h-1$ prescribed length-two connectors and choose only enough vertices of $B\setminus\{x_1,\ldots,x_{h-1}\}$ to obtain $r+1$ length-two connectors.  Together with the length-three connector, this requests $r+2=a$ connectors, so \cref{lem:connector-selection} applies.

\subsubsection{The fixed-endpoint estimate}

\begin{theorem}\label{thm:mixed-master}
Fix integers $h=s-1\ge3$ and $a\ge h+2$.  For every $\eps>0$, there are constants $N_0=N_0(a,h,\eps)$ and $C=C(a,h,\eps)$ such that the following holds.  Let $N\ge N_0$, let $J$ be an $N$-vertex graph with $\circum(J)\le2a+1$, and suppose that $J$ has no $p$--$q$ path of length at least $2a$.  Apply \cref{lem:kernel-normal-form} with $M=20(a+h+1)$, and use the resulting sets $K$, $\mathscr P$, $\mathscr S$, and the families $\mathsf R(c,c')$ in the canonical encoding above.  For every fixed $\mathbf y\in\cO^{h-1}$,
\begin{align}
&\sum_{j=0}^{h-1}\sum_{\mathbf x\in C_j(\mathbf y)}
\bigl(|L_j(\mathbf y,\mathbf x)|+|E_j(\mathbf y,\mathbf x)|\bigr)\notag\\
&\qquad\le
\left[(h+1)\fall r h+\mu_{a,s}\fall r{h-1}\right]N+O_{a,h,\eps}(1). \label{eq:mixed-local-final}
\end{align}
Consequently,
\begin{equation}\label{eq:mixed-global-final}
P_{2h+1}(J;p,q)
\le
\left[(h+1)\fall r h+\mu_{a,s}\fall r{h-1}+\eps\right]N^h
+CN^{h-1}.
\end{equation}
\end{theorem}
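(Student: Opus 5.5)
The plan is to prove the local bound \eqref{eq:mixed-local-final} by splitting according to $b=|B(\mathbf y)|$, and then to sum over $\mathbf y$ using \cref{lem:canonical-reduction}. First, if some $T_t\notin\mathscr S$, then every $C_j(\mathbf y)$ is empty and there is nothing to prove. So assume all $T_t\in\mathscr S$.

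\emph{Case $b\le r-1$.} Apply \cref{lem:noncritical-local} at each position $j$. Then use \cref{lem:noncritical-arithmetic} with $\lambda=\lambda_j$, and sum over $j$ using $\sum_j\lambda_j=h+1$. This gives \eqref{eq:noncritical-total}, which is already stronger than \eqref{eq:mixed-local-final}, since the extra term $\mu_{a,s}\fall r{h-1}N$ is nonnegative.

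\emph{Case $b\ge r+1$.} The remark after \eqref{eq:critical-residual-total} shows that every $E_j(\mathbf y,\mathbf x)$ is empty. The $L_j$ terms are bounded for every $b$ by \cref{lem:conditional-direct}, giving $(h+1)\fall rhN$.

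\emph{Case $b=r$.} I split the sum into the $L_j$ part and the $E_j$ part. For the $L_j$ part, \cref{lem:conditional-direct} gives $(h+1)\fall rhN$. For the $E_j$ part, \eqref{eq:critical-residual-total} gives $\mu_{a,s}\fall r{h-1}N+O(1)$. Adding the two bounds yields \eqref{eq:mixed-local-final} in this case.

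I must make sure the conditional-direct bound is not double counted against the $E$ bound. It is not, because \cref{lem:conditional-direct} bounds only the $L$ terms, summed over all $j$ and all $\mathbf x$, while \eqref{eq:critical-residual-total} bounds only the $E$ terms.

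The genuine point to check is that the $O(1)$ error terms are uniform in $\mathbf y$. The constants in \cref{lem:noncritical-local,lem:residual-one-position} depend only on $a,h,\eps$, since $|K|$ and $M$ are bounded by these parameters, so uniformity holds.

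For \eqref{eq:mixed-global-final}, insert \eqref{eq:mixed-local-final} into \eqref{eq:canonical-majorization}. The outer sum runs over $\mathbf y\in\cO^{h-1}$, which has at most $N^{h-1}$ terms. The main term therefore contributes $[(h+1)\fall rh+\mu_{a,s}\fall r{h-1}]N^h$. The $O_{a,h,\eps}(1)$ errors contribute $O(N^{h-1})$. Finally, the error $\eps N^h+CN^{h-1}$ from \cref{lem:kernel-normal-form} is added, and the constants are enlarged accordingly.

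The one step needing care, which I expect to be the main obstacle, is that \cref{lem:kernel-normal-form} is applied with the given $\eps$ and $M=20(a+h+1)$. This value of $M$ must be large enough for every application of \cref{lem:connector-selection} in the earlier lemmas. Those applications request at most about $a+1$ connectors, plus forbidden sets of size $O(a)$ coming from a rooted path of length at most $2a-2b+1$. I would verify that $4(a+1+2a+2)\le M$ holds, or else adjust the constant in $M$.
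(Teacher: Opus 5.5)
Your proposal is correct and follows the paper's proof essentially verbatim. It uses the same three-way split on $b$: \eqref{eq:noncritical-total} for $b\le r-1$, \cref{lem:conditional-direct} plus \eqref{eq:critical-residual-total} for $b=r$, and the emptiness of the $E_j$ together with \cref{lem:conditional-direct} for $b\ge r+1$; it then sums over the at most $N^{h-1}$ tuples $\mathbf y$ and adds the error from \eqref{eq:canonical-majorization}. Your concern about $M$ is harmless, since $M/4=5(a+h+1)$ already exceeds the bound $3a+3$ on requested connectors plus forbidden vertices in the earlier applications of \cref{lem:connector-selection}.
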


\begin{proof}
If $b\le r-1$, use \eqref{eq:noncritical-total}.  If $b=r$, combine the direct estimate \eqref{eq:conditional-direct} with \eqref{eq:critical-residual-total}.  If $b\ge r+1$, all sets $E_j$ are empty, and \cref{lem:conditional-direct} gives the explicit bound
\[
\sum_{j=0}^{h-1}\sum_{\mathbf x\in C_j(\mathbf y)}
\bigl(|L_j(\mathbf y,\mathbf x)|+|E_j(\mathbf y,\mathbf x)|\bigr)
\le(h+1)\fall r hN.
\]
These three bounds prove \eqref{eq:mixed-local-final}.  There are at most $N^{h-1}$ ordered tuples $\mathbf y$.  Thus summing \eqref{eq:mixed-local-final} contributes the coefficient in \eqref{eq:mixed-global-final} times $N^h$, while the $O_{a,h,\eps}(1)$ error for each tuple contributes $O_{a,h,\eps}(N^{h-1})$.  Adding the $\eps N^h+CN^{h-1}$ error in \eqref{eq:canonical-majorization} gives \eqref{eq:mixed-global-final}.
\end{proof}

\begin{lemma}\label{lem:strict-slack}
For every $a>s\ge4$,
\begin{equation}\label{eq:strict-slack}
\mu_{a,s}\fall r{h-1}
<2(h+1)\fall{r-1}{h-1}.
\end{equation}
\end{lemma}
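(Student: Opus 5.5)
The inequality is pure arithmetic in the parameters \eqref{eq:h-r-d}, with $h=s-1\ge3$, $r=a-2$ and $d=a-s=r-h+1\ge1$. The plan is to express both falling factorials through $\fall{r-1}{h-2}$ and reduce \eqref{eq:strict-slack} to a linear comparison. We have
\[
\fall r{h-1}=r\,\fall{r-1}{h-2},\qquad
\fall{r-1}{h-1}=\fall{r-1}{h-2}\,(r-h+1)=d\,\fall{r-1}{h-2}.
\]
Since $r-1\ge h-2$ (indeed $r-1=h+d-2$), the factor $\fall{r-1}{h-2}$ is positive. The claim is therefore equivalent to
\[
\mu_{a,s}\,r<2(h+1)d,\qquad\text{i.e.}\qquad \min\{h,d+1\}\,(h+d-1)<2(h+1)d .
\]

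We then split according to which term attains the minimum.

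If $d+1\le h$, the left side is $(d+1)(h+d-1)$. Using $d+1\le h$ gives $h+d-1\le 2h-2$, and we must compare this with $2(h+1)d$. Expanding, the difference is
\[
2(h+1)d-(d+1)(h+d-1)=hd+3d-d^2-h+1=d(h-d)+3d-h+1.
\]
With $d\le h-1$ we have $d(h-d)\ge d$. So it suffices that $4d-h+1>0$ when $d$ is large, while for $d$ small one uses $d(h-d)$ directly.

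A more careful approach is to treat the difference $f(d)=-d^2+(h+3)d-(h-1)$ as a concave quadratic in $d$. It is then enough to check the endpoints $d=1$ and $d=h-1$:
\[
f(1)=-1+h+3-h+1=3>0,\qquad f(h-1)=-(h-1)^2+(h+3)(h-1)-(h-1)=(h-1)\cdot3>0.
\]
By concavity, $f(d)>0$ throughout this range.

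If $d+1>h$, i.e. $d\ge h$, the left side is $h(h+d-1)$. The difference is
\[
2(h+1)d-h(h+d-1)=hd+2d-h^2+h\ge h^2+2h-h^2+h>0,
\]
using $d\ge h$.

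The only point needing care is to confirm that the reduction is legitimate. This requires $\fall{r-1}{h-2}>0$, which holds because $a>s$ gives $d\ge1$ and hence $r-1\ge h-1>h-2$, together with the hypothesis $s\ge4$, which gives $h\ge3$. I expect no genuine obstacle beyond this bookkeeping and the endpoint check of the concave quadratic.
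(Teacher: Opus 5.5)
This is essentially the paper's proof: you use the same reduction to $\mu_{a,s}r<2(h+1)d$ via $\fall r{h-1}=r\fall{r-1}{h-2}$ and $\fall{r-1}{h-1}=d\fall{r-1}{h-2}$, and the same case split on which term attains $\min\{h,d+1\}$. The only difference is that in the case $d\le h-1$ you check the endpoints of a concave quadratic, where the paper writes the difference as $(d-1)(h-d+1)+2$.

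You need to fix one arithmetic slip. The difference is $2(h+1)d-(d+1)(h+d-1)=hd+2d-d^2-h+1$, not $hd+3d-d^2-h+1$. So $f(d)=-d^2+(h+2)d-(h-1)$, with $f(1)=2$ and $f(h-1)=2(h-1)$. Both values are still positive, so your conclusion stands. You should also delete the abandoned heuristic involving $4d-h+1$.
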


\begin{proof}
By
\[
\fall r{h-1}=r\fall{r-1}{h-2},
\qquad
\fall{r-1}{h-1}=d\fall{r-1}{h-2},
\]
it is enough to prove
\begin{equation}\label{eq:slack-reduced}
\mu_{a,s}r<2(h+1)d.
\end{equation}
If $d\le h-1$, then $\mu_{a,s}=d+1$ and
\begin{align*}
2(h+1)d-(d+1)r
&=2(h+1)d-(d+1)(h+d-1)\\
&=(d-1)(h-d+1)+2>0.
\end{align*}
If $d\ge h-1$, then $\mu_{a,s}=h$ and
\begin{align*}
2(h+1)d-hr
&=2(h+1)d-h(h+d-1)\\
&=d(h+2)-h(h-1)\\
&\ge2(h-1)>0.
\end{align*}
Thus \eqref{eq:slack-reduced}, and hence \eqref{eq:strict-slack}, holds.
\end{proof}

\begin{corollary}\label{cor:odd-fixed-slack}
Fix $a>s\ge4$, and put $h=s-1$ and $r=a-2$.  For every $\eps>0$, there are constants $N_0$ and $C$ such that the following holds for every $N\ge N_0$.  If $J$ is an $N$-vertex graph with $\circum(J)\le2a+1$ and no $p$--$q$ path of length at least $2a$, then
\[
P_{2h+1}(J;p,q)
\le\left[(h+1)\fall r h+2(h+1)\fall{r-1}{h-1}+\eps\right]N^h
+CN^{h-1}.
\]
\end{corollary}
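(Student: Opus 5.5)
The corollary should follow by combining \cref{thm:mixed-master} with \cref{lem:strict-slack}; no new combinatorics is needed.

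Fix $a>s\ge4$, so that $h=s-1\ge3$ and $a\ge s+1=h+2$. These are exactly the hypotheses of \cref{thm:mixed-master}.

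Given $\eps>0$, let $N_0=N_0(a,h,\eps)$ and $C=C(a,h,\eps)$ be the constants from \cref{thm:mixed-master}. Let $J$ be an $N$-vertex graph with $N\ge N_0$, $\circum(J)\le2a+1$, and no $p$--$q$ path of length at least $2a$. We apply \cref{lem:kernel-normal-form} with $M=20(a+h+1)$, as that theorem prescribes. Then \eqref{eq:mixed-global-final} gives
\[
P_{2h+1}(J;p,q)\le\left[(h+1)\fall r h+\mu_{a,s}\fall r{h-1}+\eps\right]N^h+CN^{h-1}.
\]

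By \cref{lem:strict-slack}, which applies because $a>s\ge4$, we have
\[
\mu_{a,s}\fall r{h-1}<2(h+1)\fall{r-1}{h-1}.
\]
Substituting this into the bracket gives the stated bound with the same constants $N_0$ and $C$.

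There is no real obstacle here; the only thing to check is that the parameter ranges match. The condition $a>s$ gives $a\ge h+2$, and $s\ge4$ gives $h\ge3$. The strictness in \cref{lem:strict-slack} is not used at this point. It presumably matters later, where the slack is compared with the increment \eqref{eq:F-diff} and the strict inequality absorbs the $\eps N^h$ error.
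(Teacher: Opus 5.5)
Your proposal is correct and is exactly the paper's argument. The paper proves the corollary by combining \cref{thm:mixed-master} with \cref{lem:strict-slack}: the parameter check $a>s\ge4\Rightarrow h\ge3,\ a\ge h+2$ lets you apply \eqref{eq:mixed-global-final}, and the non-strict form of the slack inequality then bounds $\mu_{a,s}\fall r{h-1}$ by $2(h+1)\fall{r-1}{h-1}$ in the leading coefficient, with the same $N_0$ and $C$.
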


\begin{proof}
Combine \cref{thm:mixed-master} with \cref{lem:strict-slack}.
\end{proof}

\subsection{Odd cycles with an even circumference bound}\label{sec:even-fixed}

The even case has no critical length-three term, so the fixed-endpoint estimate is shorter.

\begin{theorem}\label{thm:even-fixed-endpoint}
Let $h\ge2$ and $a\ge h+2$.  For every $\eps>0$ there is a constant $C=C(a,h,\eps)$ such that the following holds for all sufficiently large $N$.  Let $J$ be an $N$-vertex graph with $\circum(J)\le2a$, and let $p\ne q$ be vertices such that $J$ has no $p$--$q$ path of length at least $2a-1$.  Then
\[
P_{2h+1}(J;p,q)
\le\bigl((h+1)\fall{a-2}{h}+\eps\bigr)N^h+CN^{h-1}.
\]
\end{theorem}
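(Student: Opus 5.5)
We follow the same route as \cref{thm:mixed-master}, with the bound $2a$ replaced by $2a-1$. Apply \cref{lem:kernel-normal-form} with $\kappa=0$ and $M=20(a+h+1)$ to get $K$, $\mathscr P$, $\mathscr S$ and the families $\mathsf R(c,c')$. By \cref{lem:canonical-reduction} it then suffices to prove that, for every $\mathbf y\in\cO^{h-1}$ with all $T_t\in\mathscr S$,
\[
\sum_{j=0}^{h-1}\sum_{\mathbf x\in C_j(\mathbf y)}\bigl(|L_j(\mathbf y,\mathbf x)|+|E_j(\mathbf y,\mathbf x)|\bigr)\le(h+1)\fall rh\,N+O_{a,h,\eps}(1).
\]
Summing over the at most $N^{h-1}$ tuples $\mathbf y$ and adding the $\eps N^h+CN^{h-1}$ error then gives the theorem. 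Note that $h\ge2$ is allowed here.

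\emph{Direct terms.} We rerun the proof of \cref{lem:conditional-direct}. Fix $w$ of type $S$, and put $W=S\cup T_1\cup\cdots\cup T_{h-1}$ and $m=|W\setminus\{p,q\}|$. If $m\ge a-1$, insert $a-1-h$ further vertices into connectors. This gives a path with $a-1$ length-two connectors and one kernel edge, so its length is $2a-1$, a contradiction. Hence $m\le a-2=r$, and each of the $h+1$ positions contributes at most $\fall rh$ ordered tuples. This gives $(h+1)\fall rh N$ with no critical case $m=r+1$. In particular, whenever $b=|B|\ge r$, the count is $(h+1)\fall rhN$ provided the $E_j$ vanish.

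\emph{Residual terms.} Suppose $E_j(\mathbf y,\mathbf x)\ne\varnothing$. Combine one length-three connector with the $h-1$ prescribed length-two connectors, and insert all vertices of $B\setminus\{x_1,\ldots,x_{h-1}\}$ via \cref{lem:connector-insertion}, choosing internal vertices by \cref{lem:connector-selection}. This gives a $p$--$q$ path of length $2b+3$. Therefore $b\le a-3=r-1$ whenever some $E_j$ is nonempty.

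\emph{The case $b\le r-1$.} Here we argue as in \cref{lem:noncritical-local}. The inner members contribute $\lambda_j\fall bhN$. For the rooted graph built from the $L_j^{\rm out}$ and $E_j$ paths, an $r_0$--$r_1$ path of length at least $L_0=2a-1-2b$ would yield, after inserting $B$, a $p$--$q$ path of length at least $2a-1$; \cref{lem:rooted-shortening} keeps the length bounded, as it needs $L_0\ge3$. So \cref{lem:rooted-p3} with $D=L_0-1$ gives at most $(2(r-b))N+O(1)$, which is slightly better than before. The per-$j$ bound is then
\[
\lambda_j\fall bh+2(r-b)\fall b{h-1}\le\lambda_j\fall rh.
\]
This follows from \eqref{eq:fall-difference-final}, since $ht\ge2t$ for $h\ge2$ and $t=r-b\ge1$. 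Summing over $j$ with $\sum\lambda_j=h+1$ gives the claim.

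\emph{Main obstacle.} The delicate part is checking that the reduction, error terms and edge cases survive at $h=2$, where $\mathbf y$ has one entry. We also need $L_0\ge3$ (true since $b\le r-1$ gives $L_0\ge5$), and we must confirm that the $\lambda_{h-1}=2$ doubling is handled by the same arithmetic. We expect these to be routine checks.
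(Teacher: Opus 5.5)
Your proposal is correct and follows the paper's proof. You use the same kernel reduction, the same $L_j^{\rm in}/L_j^{\rm out}$ split with the rooted-graph bound $2(r-b)N$ for $b\le r-1$, and the same falling-factorial arithmetic. The only variation is the case $b\ge r$. There you bound all $L_j$ by rerunning \cref{lem:conditional-direct}, where the bound $2a-1$ forces $m\le r$, so there is no critical case. The paper instead shows directly that $L_j^{\rm out}$ and $E_j$ vanish for $b\ge r$, and that $L_j^{\rm in}$ vanishes for $b\ge r+1$.

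One small repair is needed. To show $E_j=\varnothing$ when $b\ge r$, insert only $r-h+1$ vertices of $B$, which gives length exactly $2r+3=2a-1$. Inserting all of $B$ may request more than the $M/4$ connectors that \cref{lem:connector-selection} allows.
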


\begin{proof}
Apply \cref{lem:kernel-normal-form,lem:canonical-reduction} with $\kappa=0$ and $M=20(a+h+1)$.  Fix $\mathbf y\in\cO^{h-1}$, and define $T_t$, $B$, $b$, and $\lambda_j$ as in the preceding section.  Put $r=a-2$.

Fix $j$ and split $L_j=L_j^{\rm in}\sqcup L_j^{\rm out}$ according as the additional kernel vertex belongs to $B$.  The same direct count as in \cref{lem:noncritical-local} gives
\[
\sum_{\mathbf x\in C_j(\mathbf y)}|L_j^{\rm in}(\mathbf y,\mathbf x)|
\le\lambda_j\fall b hN.
\]
This term is zero when $b\ge r+1$.  Indeed, choose only enough vertices of $B$ to obtain $r$ length-two connectors.  Together with the remaining length-two connector and the kernel edge, they give a path of length $2r+3=2a-1$.  At most $r+1=a-1$ connectors are requested.

Assume first that $b\le r-1$.  For fixed $\mathbf x$, put all paths represented by $L_j^{\rm out}$ and $E_j$ in the rooted graph used in \cref{lem:noncritical-local}.  If this graph contained a root-to-root path of length at least $L_0=2a-2b-1$,
then \cref{lem:rooted-shortening} and \cref{lem:connector-insertion,lem:connector-selection} would give a $p$--$q$ path of length at least $2b+L_0=2a-1$.  Hence no such rooted path exists.  Applying \cref{lem:rooted-p3} with $D=L_0-1$ gives
\[
|L_j^{\rm out}(\mathbf y,\mathbf x)|+|E_j(\mathbf y,\mathbf x)|
\le2(r-b)N+O_{a,h,\eps}(1).
\]
When $b\ge r$, the same left side is zero: choose enough vertices of $B$ to obtain $r$ length-two connectors and add the given length-three connector.  The resulting path has length $2r+3=2a-1$ and requests $r+1=a-1$ connectors.

It follows that, for $b\le r$,
\[
\sum_{\mathbf x\in C_j(\mathbf y)}
\bigl(|L_j|+|E_j|\bigr)
\le\left[\lambda_j\fall b h+2(r-b)\fall b{h-1}\right]N+O_{a,h,\eps}(1).
\]
Moreover,
\[
\fall r h-\fall b h
=\sum_{u=b}^{r-1}h\fall u{h-1}
\ge h(r-b)\fall b{h-1}.
\]
Since $h\ge2$ and $\lambda_j\in\{1,2\}$, the last two displays imply
\[
\lambda_j\fall b h+2(r-b)\fall b{h-1}
\le\lambda_j\fall r h.
\]
The same inequality is trivial when $b>r$, because all terms vanish.  Summing over $j$ and using $\sum_j\lambda_j=h+1$ yields
\[
\sum_{j=0}^{h-1}\sum_{\mathbf x\in C_j(\mathbf y)}
\bigl(|L_j|+|E_j|\bigr)
\le(h+1)\fall r hN+O_{a,h,\eps}(1).
\]
Finally sum over $\mathbf y$ and use the canonical encoding.  This proves the theorem.
\end{proof}

\subsection{Odd cycles when \texorpdfstring{$s=3$}{s = 3}}\label{sec:s3}

We first state the path estimate needed in the deletion argument.

\begin{lemma}\label{lem:s3-rooted-paths}
Fix $a\ge4$ and $\eps>0$, and put $r=a-2$.  There are constants $N_0=N_0(a,\eps)$ and $C=C(a,\eps)$ such that the following holds.  Let $J$ be an $N$-vertex graph with $N\ge N_0$ and $\circum(J)\le2a+1$, and let $U\subseteq V(J)$ satisfy $|U|\le a-1$.  If $J$ has no path of length at least $2a$ with both endpoints in $U$, then
\[
P_5(J;U)
\le \binom{|U|}{2}(3r^2-r+\eps)N^2+CN.
\]
\end{lemma}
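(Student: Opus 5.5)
This lemma is the case $h=s-1=2$ of \cref{thm:mixed-master}, summed over endpoint pairs. I would first reduce to a single pair. For distinct $p,q\in U$, the hypothesis says in particular that $J$ has no $p$--$q$ path of length at least $2a$. Since $P_5(J;U)=\sum_{\{p,q\}\subseteq U}P_5(J;p,q)$ and there are $\binom{|U|}{2}\le\binom{a-1}{2}$ pairs, it suffices to prove
\[
P_5(J;p,q)\le(3r^2-r+\eps)N^2+CN
\]
for a fixed pair.

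With $h=2$ the target coefficient is exactly the one in \eqref{eq:mixed-global-final}. Indeed, $\mu_{a,3}=\min\{2,a-2\}=2$ because $a\ge4$. Hence
\[
(h+1)\fall r h+\mu_{a,s}\fall r{h-1}=3r(r-1)+2r=3r^2-r.
\]
The plan is therefore to rerun the proof of \cref{thm:mixed-master} with $h=2$. That theorem assumed $h\ge3$, and the only place this is used is \cref{lem:noncritical-arithmetic}: for $\lambda=1$ it needs $(h-2)t-1\ge0$, which fails when $h=2$.

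Concretely, I would apply \cref{lem:kernel-normal-form} and \cref{lem:canonical-reduction} with $h=2$, $\kappa=1$ and $M=20(a+3)$. These lemmas require only $h\ge2$ and $a\ge h+2=4$. I then fix a single connector vertex $y=y_1\in\cO$, put $B=T(y)\setminus\{p,q\}$ and $b=|B|$, and use $j\in\{0,1\}$ with $\lambda_0=1$ and $\lambda_1=2$.

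The proofs of \cref{lem:conditional-direct}, \cref{lem:noncritical-local}, \cref{lem:residual-one-position}, \cref{lem:private-obstruction} and \cref{lem:active-position-count} never use $h\ge3$. I would check this line by line. For example, in \cref{lem:private-obstruction} with $h=2$ the only configuration is $j=0$, $k=1$. Its length count $2(h-2+d)+6=2a$ still holds, using $d=a-3$. Likewise, the ratio inequality $(a-1)/(a-s)\le h+1$ in \cref{lem:conditional-direct} holds for $s=3$.

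I would then argue by cases on $b$.
\begin{itemize}
\item If $b\ge r+1$, every $E_j$ is empty. \cref{lem:conditional-direct} then gives $3r(r-1)N$.
\item If $b=r$, \cref{lem:conditional-direct} gives $3r(r-1)N$ for the direct terms. \eqref{eq:critical-residual-total} gives $\mu_{a,3}\,rN=2rN$ for the residual terms. The total is $(3r^2-r)N+O(1)$.
\item If $b\le r-1$, this is where the arithmetic must change. Instead of bounding each position $j$ separately, I would add the two instances of \eqref{eq:Phi-local}:
\[
\sum_{j=0}^{1}\Bigl[\lambda_j\fall b2+(2(r-b)+1)b\Bigr]
=3b(b-1)+2(2r-2b+1)b=-b^2+4rb-b .
\]
Then
\[
(3r^2-r)-(-b^2+4rb-b)=(r-b)(3r-b-1)\ge0 .
\]
So the summed bound is at most $(3r^2-r)N+O_{a,\eps}(1)$. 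The individual $j=0$ term may exceed $\fall r2$; the slack at $j=1$ compensates for it.
\end{itemize}

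Finally, I would sum over the at most $N$ choices of $y$. Adding the $\eps N^2+CN$ error from \eqref{eq:canonical-majorization} gives the single-pair bound, and summing over pairs finishes the proof.

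The main obstacle I expect is confirming that no step of the kernel reduction or of the critical case $b=r$ silently needed $h\ge3$. The places to check are the source-counting error terms in \cref{lem:kernel-normal-form} when $h-1=1$, and the connector-selection budget $t+|Z|\le M/4$ in the insertion arguments. Both look fine, because the number of connectors requested is at most $a$, and $M$ was chosen proportional to $a+h+1$ precisely for this. The arithmetic in the case $b\le r-1$ is the only genuinely new ingredient.
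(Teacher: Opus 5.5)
Your proposal is correct, but it takes a different route from the paper.

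\textbf{The paper's route.} The paper does not rerun \cref{thm:mixed-master} with $h=2$. Instead it conditions on the ordered pair of connector types $(A,B)\in\mathscr S^2$, weighted by $\xi_A\xi_B$.
\begin{enumerate}[label=(\roman*),leftmargin=2.4em]
\item It bounds the three direct configurations by $\beta z(x-1)$, $\alpha\beta(xy-z)$ and $\alpha z(y-1)$.
\item It bounds the two length-three configurations by $\beta g(y)$ and $\alpha g(x)$.
\item It proves the support constraints \eqref{eq:support-A}--\eqref{eq:support-B}.
\item It obtains \cref{clm:s3-pointwise} by a four-case quadratic optimization.
\item It averages, using $\theta\le1$ to absorb the mismatch between the weights $\theta^2$ and $\theta$.
\end{enumerate}

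\textbf{Your route.} You condition on the single connector vertex $y_1$ and note that \cref{lem:conditional-direct,lem:noncritical-local,lem:residual-one-position} go through unchanged for $h=2$. In particular, the ratio inequality in \cref{lem:conditional-direct} becomes $2(a-4)\ge0$. You then repair the one failing step, \cref{lem:noncritical-arithmetic}, by summing over the two positions:
\[
\sum_{j=0}^{1}\bigl[\lambda_j b(b-1)+(2(r-b)+1)b\bigr]=-b^2+(4r-1)b,
\qquad
(3r^2-r)-\bigl(-b^2+(4r-1)b\bigr)=(r-b)(3r-b-1)\ge0.
\]
This is right. Your argument is shorter and shows that \eqref{eq:mixed-global-final} also holds for $h=2$, so the separate $s=3$ subsection could be absorbed into the general odd-circumference treatment. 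The paper's argument is self-contained and symmetric in the two connectors, but it needs the ad hoc case analysis.

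\textbf{Two simplifications.} First, $\mu_{a,3}=\min\{2,a-2\}=2=h$ already equals the number of positions, so \cref{lem:private-obstruction,lem:active-position-count} are not needed. Second, your summed bound from \cref{lem:noncritical-local} holds for all $b\le r$, and at $b=r$ it gives exactly $3r^2-r$. So the critical case needs no separate treatment, and only the case $b\ge r+1$ requires \cref{lem:conditional-direct}.
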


\begin{proof}
If $|U|\le1$, then $P_5(J;U)=0$.  Assume $|U|\ge2$.  Put $\kappa=1$ and $h=2$, and fix distinct $p,q\in U$.  Then $J$ has no $p$--$q$ path of length at least $2a$.  Apply \cref{lem:kernel-normal-form,lem:canonical-reduction} with the given value of $\eps$ and with $M=20(a+3)$.  Define
\[
\xi_A=\frac{|\cO_A|}{N}\quad(A\in\mathscr S),
\qquad \theta=\sum_{A\in\mathscr S}\xi_A\le1.
\]
For $A,B\in\mathscr S$, put $X=A\setminus\{p,q\}$, $Y=B\setminus\{p,q\}$,
\[
x=|X|,\quad y=|Y|,\quad z=|X\cap Y|,
\quad u=|X\cup Y|,
\quad \alpha=\ind_{\{p\in A\}},\quad \beta=\ind_{\{q\in B\}}.
\]
Define the three direct terms by
\begin{align*}
D_0(A,B)&=\beta\bigl|\{(z_0,c)\in X\times(X\cap Y):z_0\ne c,\ pz_0\in E(J[K])\}\bigr|,\\
D_1(A,B)&=\alpha\beta\bigl|\{(c,z_0)\in X\times Y:c\ne z_0,\ cz_0\in E(J[K])\}\bigr|,\\
D_2(A,B)&=\alpha\bigl|\{(c,z_0)\in(X\cap Y)\times Y:c\ne z_0,\ z_0q\in E(J[K])\}\bigr|.
\end{align*}
The two normalized length-three terms are
\begin{align*}
G^L(B)&=\frac{\beta}{N}\sum_{c\in Y}
\bigl|\{(u_0,v_0)\in\cO^2:(p,u_0,v_0,c)\in\mathsf R(p,c)\}\bigr|,\\
G^R(A)&=\frac{\alpha}{N}\sum_{c\in X}
\bigl|\{(u_0,v_0)\in\cO^2:(c,u_0,v_0,q)\in\mathsf R(c,q)\}\bigr|.
\end{align*}

We now explain the five terms.  Let $w_A\in\cO_A$ and $w_B\in\cO_B$.  Up to the error in \cref{lem:kernel-normal-form}, every $p$--$q$ path of length five has exactly one of the following forms:
\begin{enumerate}[label=(\roman*),leftmargin=2.4em]
\item $p z_0 w_A c w_B q$, where $pz_0\in E(J[K])$, $z_0,c\in X$, $c\in Y$, and $q\in B$.  Thus $z_0\ne c$, and the number of possible ordered pairs $(z_0,c)$ is $D_0(A,B)$.
\item $p w_A c z_0 w_B q$, where $p\in A$, $q\in B$, $c\in X$, $z_0\in Y$, and $cz_0\in E(J[K])$.  The number of possible ordered pairs $(c,z_0)$ is $D_1(A,B)$.
\item $p w_A c w_B z_0 q$, where $p\in A$, $c\in X\cap Y$, $z_0\in Y$, and $z_0q\in E(J[K])$.  Thus $c\ne z_0$, and the number of possible ordered pairs $(c,z_0)$ is $D_2(A,B)$.
\item $p u_0v_0c w_Bq$, where $(p,u_0,v_0,c)\in\mathsf R(p,c)$, $c\in Y$, and $q\in B$.  These paths are counted by $G^L(B)$ after normalization.
\item $p w_Ac u_0v_0q$, where $p\in A$, $c\in X$, and $(c,u_0,v_0,q)\in\mathsf R(c,q)$.  These paths are counted by $G^R(A)$ after normalization.
\end{enumerate}
For fixed $A,B$, there are at most $|\cO_A||\cO_B|=\xi_A\xi_BN^2$ ordered choices of $(w_A,w_B)$ in cases~(i)--(iii).  Possible coincidences only cause overcounting and are harmless for an upper bound.  In case~(iv), the factor $|\cO_B|=\xi_BN$ counts $w_B$, while the definition of $G^L(B)$ divides the number of length-three connectors by $N$.  After division by $N^2$, the contribution is $\xi_BG^L(B)$.  The same argument gives $\xi_AG^R(A)$ in case~(v).  Therefore the canonical encoding gives
\begin{align}\label{eq:s3-average-interface}
\frac{P_5(J;p,q)}{N^2}
\le{}&
\sum_{A,B\in\mathscr S}\xi_A\xi_B
\bigl(D_0(A,B)+D_1(A,B)+D_2(A,B)\bigr)\nonumber\\
&+\sum_{B\in\mathscr S}\xi_B G^L(B)
+\sum_{A\in\mathscr S}\xi_A G^R(A)
+\eps+O_{a,\eps}(N^{-1}).
\end{align}
The term $\eps$ accounts for the paths discarded in \cref{lem:kernel-normal-form}, and the term $O_{a,\eps}(N^{-1})$ comes from the $O_{a,\eps}(N)$ remaining error after division by $N^2$.

The definitions give
\begin{equation}\label{eq:direct-s3-bounds}
\begin{aligned}
D_0(A,B)&\le\beta z(x-1),&
D_1(A,B)&\le\alpha\beta(xy-z),&
D_2(A,B)&\le\alpha z(y-1).
\end{aligned}
\end{equation}
For $D_0$, the vertex $c$ lies in $X\cap Y$ and $z_0$ is a different member of $X$.  The estimate for $D_2$ is symmetric.  For $D_1$, the number of ordered pairs of distinct vertices in $X\times Y$ is $xy-z$.

Define
\begin{equation}\label{eq:g-def}
g(t)=
\begin{cases}
(2(r-t)+1)t,&0\le t\le r,\\
0,&t\ge r+1.
\end{cases}
\end{equation}
Then
\begin{equation}\label{eq:residual-s3}
G^L(B)\le\beta g(y),
\qquad
G^R(A)\le\alpha g(x).
\end{equation}
We prove the first inequality.  The second is symmetric.  Fix $c\in Y$.  If $y\ge r+1$ and a length-three connector from $p$ to $c$ exists, choose $r$ vertices of $Y\setminus\{c\}$.  They, together with $c$ and $q$, give $r+1$ length-two connectors.  Adding the length-three connector gives a path of length $2(r+1)+3>2a$.  This construction requests $r+2=a$ connectors, so \cref{lem:connector-selection} applies.  Hence this contribution is zero when $y\ge r+1$.

We may now assume $y\le r$.  Since every vertex of $Y\cup\{q\}$ belongs to $B$, \cref{lem:connector-selection} gives a simple path from $c$ through all vertices of $Y\setminus\{c\}$ to $q$ made of length-two connectors.  Its length is $2y$.  Consider the union of the length-three connectors from $p$ to $c$ counted in the definition of $G^L(B)$.  Every internal edge of this rooted graph is the middle edge of one of its defining rooted paths of length three.  If it contained a $p$--$c$ path of length at least
\[
L_0=2a-2y,
\]
then \cref{lem:rooted-shortening} would give one of length $L_0$ or $L_0+1$.  By \cref{lem:connector-selection}, the length-two connectors can be chosen outside that bounded path, producing a forbidden $p$--$q$ path of length at least $2a$.  Hence \cref{lem:rooted-p3}, with $D=L_0-1$, gives at most
\[
(2a-2y-3)N=(2(r-y)+1)N
\]
length-three connectors for this $c$.  Summing over the $y$ choices for $c$ proves \eqref{eq:residual-s3}.

We prove that the following two constraints hold:
\begin{align}
\alpha\beta z>0&\quad\Longrightarrow\quad u\le r,\label{eq:support-A}\\
D_0+D_1+D_2>0&\quad\Longrightarrow\quad u\le r+1.\label{eq:support-B}
\end{align}
For \eqref{eq:support-A}, choose $c\in X\cap Y$.  If $u\ge r+1$, choose a set $W\subseteq X\cup Y$ of order $r+1$ that contains $c$.  Order the vertices of $W\cap X$ from $p$ to $c$, and then order the remaining vertices of $W$ from $c$ to $q$.  A vertex of $X\cap Y$ may be put on either side.  The connector-selection lemma gives a path with $r+2=a$ length-two connectors, and hence with length $2a$, a contradiction.

For \eqref{eq:support-B}, use the kernel edge in the nonzero direct term.  If $u\ge r+2$, select $r+2=a$ vertices of $X\cup Y$, including the kernel vertices required by that direct term, and put the others into the corresponding $A$- or $B$-connector.  This gives a path of length $2(r+2)+1>2a$, a contradiction.  Only $a$ length-two connectors are requested.

\begin{claim}\label{clm:s3-pointwise}
For every $A,B\in\mathscr S$,
\[
D_0+D_1+D_2+G^L+G^R\le3r^2-r.
\]
\end{claim}

\begin{proof}
Fix $A,B\in\mathscr S$ and omit them from the notation.

\smallskip
\noindent\emph{Case 1: $\alpha=\beta=1$ and $z>0$.}
By \eqref{eq:support-A}, $u\le r$.  Interchange $A,B$ if necessary and assume $x\le y$.  If $x+y=2$, then $x=y=z=1$, the direct contribution is zero, and
\[
g(x)+g(y)=4r-2\le3r^2-r.
\]
Assume $x+y\ge3$.  From \eqref{eq:direct-s3-bounds},
\[
D_0+D_1+D_2
\le xy+z(x+y-3)
\le x^2+2xy-3x.
\]
Using \eqref{eq:g-def}, the total is at most
\begin{equation}\label{eq:case1quad}
-x^2+2xy-2y^2+(2r-2)x+(2r+1)y.
\end{equation}
For fixed $y$, the derivative of \eqref{eq:case1quad} with respect to $x$ is
\[
-2x+2y+2r-2>0
\]
on $0\le x\le y\le r$.  Its maximum therefore occurs at $x=y$, where it is
\[
-y^2+(4r-1)y.
\]
This expression is increasing on $0\le y\le r$, and its value at $y=r$ is $3r^2-r$.

\smallskip
\noindent\emph{Case 2: $\alpha=\beta=1$ and $z=0$.}
Only the middle direct term can be nonzero, so $D_0+D_1+D_2\le xy$.  Suppose first that it is nonzero.  By \eqref{eq:support-B},
\[
x+y\le r+1.
\]
If $x=r+1$ or $y=r+1$, the other variable is zero, so the whole contribution is zero.  We may therefore assume $x,y\le r$.  Put $v=x+y$.  Then
\begin{align*}
xy+g(x)+g(y)
&=(2r+1)v+xy-2(x^2+y^2)\\
&=(2r+1)v+5xy-2v^2\\
&\le(2r+1)v-\frac34v^2.
\end{align*}
The last expression is increasing for $0\le v\le r+1$, because its derivative is at least $(r-1)/2$.  Hence
\[
xy+g(x)+g(y)
\le\frac{(r+1)(5r+1)}4
\le3r^2-r,
\]
where the last inequality is equivalent to $7r^2-10r-1\ge0$.

If the middle direct term is zero, each nonzero value of $g$ lies on $[0,r]$, and
\[
\max_{0\le t\le r}g(t)\le\frac{(2r+1)^2}{8}.
\]
Thus
\[
g(x)+g(y)\le\frac{(2r+1)^2}{4}\le3r^2-r.
\]

\smallskip
\noindent\emph{Case 3: exactly one of $\alpha,\beta$ equals $1$.}
Assume $\alpha=1$ and $\beta=0$.  Then
\[
D_0+D_1+D_2\le z(y-1),
\qquad
G^L+G^R\le g(x).
\]
If a direct term is nonzero, \eqref{eq:support-B} gives $u\le r+1$.  Since $y\le u$, we have $y-1\le r$, and hence
\[
z(y-1)\le rx.
\]
For $x\le r$,
\[
D_0+D_1+D_2+G^L+G^R
\le(3r+1)x-2x^2
\le\frac{(3r+1)^2}{8}
\le3r^2-r.
\]
If $x=r+1$, then $g(x)=0$ and the direct part is at most $r(r+1)\le3r^2-r$.  If $x>r+1$, \eqref{eq:support-B} forces the direct part to be zero and \eqref{eq:g-def} gives $g(x)=0$.  When all direct terms are zero, the same conclusion follows from $g(x)\le(2r+1)^2/8$.  The case $\alpha=0$, $\beta=1$ is symmetric.

\smallskip
\noindent\emph{Case 4: $\alpha=\beta=0$.}
All five terms vanish.
\end{proof}

If $\theta=0$, \eqref{eq:s3-average-interface} immediately gives the next estimate.  Suppose $\theta>0$ and put $p_A=\xi_A/\theta$ for $A\in\mathscr S$.  The three direct terms in \eqref{eq:s3-average-interface} have total weight $\theta^2$, while the two length-three terms have total weight $\theta$.  Since all five terms are nonnegative and $\theta\le1$, \cref{clm:s3-pointwise} gives
\[
\theta^2\sum_{A,B}p_Ap_B(D_0+D_1+D_2)
+\theta\sum_{A,B}p_Ap_B(G^L+G^R)
\le \theta\sum_{A,B}p_Ap_B(D_0+D_1+D_2+G^L+G^R)
\le3r^2-r.
\]
Here all sums are over $A,B\in\mathscr S$.
Together with \eqref{eq:s3-average-interface}, this gives
\begin{equation}\label{eq:s3-master}
P_5(J;p,q)\le(3r^2-r+\eps)N^2+O_{a,\eps}(N).
\end{equation}
Since $p,q$ were arbitrary distinct vertices of $U$, summing over the $\binom{|U|}{2}$ unordered endpoint pairs proves the lemma after increasing $C$.
\end{proof}

For later use, the relaxed per-pair target is
\[
\frac{r+2}{r}\,3\fall r2=3(r+2)(r-1),
\]
and
\begin{equation}\label{eq:s3-gap}
3(r+2)(r-1)-(3r^2-r)=4r-6>0.
\end{equation}
This is the positive gap needed in the low-degree deletion estimate when $s=3$.

\subsection{The low-degree deletion inequality}\label{sec:deletion}

We now convert the fixed-endpoint estimates into deletion inequalities.  We distinguish the parity of the counted cycle and treat even cycles first.

\begin{lemma}\label{lem:even-low-degree-deletion}
Fix $a\ge s\ge3$ and $\kappa\in\{0,1\}$.  There are constants $n_1$ and $\gamma>0$ such that the following holds.  Let $G$ be an $n$-vertex graph with $n\ge n_1$ and $\circum(G)\le2a+\kappa$, and let $v\in V(G)$ have degree at most $a-1$.  Then
\[
N(C_{2s},G)-N(C_{2s},G-v)
\le E_{a,s}^{(\kappa)}(n)-E_{a,s}^{(\kappa)}(n-1)-\gamma n^{s-1}.
\]
\end{lemma}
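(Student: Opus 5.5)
The quantity $N(C_{2s},G)-N(C_{2s},G-v)$ counts the copies of $C_{2s}$ through $v$. Each such cycle is determined by an unordered pair of distinct neighbours $x,z\in U\defeq N_G(v)$ together with an $x$--$z$ path of length $2s-2$ in $J\defeq G-v$. So the plan is to apply \cref{lem:even-rooted-paths} to $J$ with $N=n-1$, $r=s-1$ and $U=N_G(v)$, and then compare the result with \eqref{eq:E-diff}.

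\emph{Checking the hypotheses.} We have $|U|\le a-1$ directly. The edge bound $e(J)\le CN$ with $C=C(a)$ follows from the Erd\H{o}s--Gallai cycle theorem, since $\circum(J)\le2a+\kappa\le 2a+1$. It remains to see that $J$ has no path of length $2a$ with both ends in $U$. If $P$ were such a path, then $vP$ would be a cycle of length $2a+2>2a+\kappa$ in $G$, which is impossible. The range condition $1\le r\le a-1$ holds because $r=s-1\le a-1$, and $r\ge1$ since $s\ge3$.

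\emph{Applying the lemma.} It gives
\[
N(C_{2s},G)-N(C_{2s},G-v)=P_{2s-2}(J;U)\le\binom{a-1}{2}\fall{a-2}{s-2}N^{s-1}+O_{a,s}\bigl(N^{s-1-1/(a+s)}\bigr).
\]
By \eqref{eq:E-diff}, the increment $E_{a,s}^{(\kappa)}(n)-E_{a,s}^{(\kappa)}(n-1)$ has leading term $\binom a2\fall{a-2}{s-2}n^{s-1}$. The difference of the two leading coefficients is
\[
\Bigl[\binom a2-\binom{a-1}2\Bigr]\fall{a-2}{s-2}=(a-1)\fall{a-2}{s-2},
\]
which is strictly positive because $a-2\ge s-2$. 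Take $\gamma$ to be half of this coefficient. Since $N\le n$, both error terms are $o(n^{s-1})$, so for $n\ge n_1$ they are absorbed by the remaining half of the gap.

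\emph{Main difficulty.} The real work has already been done in \cref{lem:even-rooted-paths}. The only care needed here is to confirm that the constant $C$, and hence the implicit constants, depend only on $a$ (and $s$), so that $n_1$ and $\gamma$ depend only on $a,s,\kappa$, and that the forbidden endpoint-path length $2a$ matches the lemma's hypothesis in both parity cases $\kappa\in\{0,1\}$. This last point holds because the closing cycle has length $2a+2$, which exceeds the circumference bound in either case.
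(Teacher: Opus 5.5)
Your proposal is correct and follows the paper's proof essentially step by step. Both arguments apply \cref{lem:even-rooted-paths} with $r=s-1$ to $J=G-v$ and $U=N_G(v)$. Both get $e(J)=O_a(n)$ from Erd\H{o}s--Gallai, exclude endpoint paths of length $2a$ via the closing cycle of length $2a+2$, and absorb the error terms using the coefficient gap $(a-1)\fall{a-2}{s-2}>0$ against \eqref{eq:E-diff}.
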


\begin{proof}
Put $J=G-v$, $U=N_G(v)$, and $\rho=|U|\le a-1$.  The Erd\H{o}s--Gallai cycle theorem~\cite{ErdosGallai1959} gives $e(J)=O_a(n)$.  Moreover, $J$ has no path of length $2a$ with both endpoints in $U$, since this path together with the two incident edges at $v$ would give a cycle of length $2a+2>2a+\kappa$.

Every copy of $C_{2s}$ containing $v$ is obtained uniquely from an unoriented path of length $2s-2$ in $J$ whose two endpoints lie in $U$.  Apply \cref{lem:even-rooted-paths} with $r=s-1$ to obtain
\[
N(C_{2s},G)-N(C_{2s},G-v)
\le\binom\rho2\fall{a-2}{s-2}n^{s-1}+o_{a,s}(n^{s-1}).
\]
By \eqref{eq:E-diff}, the leading coefficient of the candidate increment is $\binom a2\fall{a-2}{s-2}$.  Since $\rho\le a-1$, the difference between these two coefficients is at least $(a-1)\fall{a-2}{s-2}>0$.  Taking $\gamma$ smaller than half this number and then increasing $n_1$ proves the lemma.
\end{proof}

For $a>s\ge3$ and $\kappa\in\{0,1\}$, put $\mu_{a,s}=\min\{s-1,a-s+1\}$ and
\[
C_{a,s}^{(\kappa)}
=s\fall{a-2}{s-1}
+\kappa\mu_{a,s}\fall{a-2}{s-2}.
\]
The fixed-endpoint estimates give $C_{a,s}^{(\kappa)}$ as the coefficient of $N^{s-1}$: for $\kappa=0$ this is \cref{thm:even-fixed-endpoint}, for $\kappa=1$ and $s\ge4$ it is \cref{thm:mixed-master}, and for $\kappa=1$ and $s=3$ it follows from \cref{lem:s3-rooted-paths} because
\[
3r^2-r=3\fall r2+2\fall r1,
\qquad \mu_{a,3}=2.
\]

\begin{lemma}\label{lem:odd-low-degree-deletion}
Fix $a>s\ge3$ and $\kappa\in\{0,1\}$.  There are constants $n_1=n_1(a,s,\kappa)$ and $\gamma_{a,s}^{(\kappa)}>0$ such that the following holds.  Let $G$ be an $n$-vertex graph with $n\ge n_1$ and $\circum(G)\le2a+\kappa$, and let $v\in V(G)$ satisfy $d_G(v)\le a-1$.  Then
\begin{equation}\label{eq:deletion-increment}
N(C_{2s+1},G)-N(C_{2s+1},G-v)
\le F_{a,s}^{(\kappa)}(n)-F_{a,s}^{(\kappa)}(n-1)-\gamma_{a,s}^{(\kappa)}n^{s-1}.
\end{equation}
\end{lemma}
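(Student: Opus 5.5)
We follow the proof of \cref{lem:even-low-degree-deletion}. Put $J=G-v$, $U=N_G(v)$ and $\rho=|U|\le a-1$, and take $N=n-1$. Every copy of $C_{2s+1}$ through $v$ corresponds bijectively to an unoriented path of length $2s-1=2h+1$ in $J$ whose endpoints are two distinct vertices of $U$. Hence
\[
N(C_{2s+1},G)-N(C_{2s+1},G-v)=\sum_{\{p,q\}\subseteq U}P_{2h+1}(J;p,q).
\]
Since $J\subseteq G$, we have $\circum(J)\le2a+\kappa$. For distinct $p,q\in U$, a $p$--$q$ path in $J$ of length $\ell$ together with $v$ gives a cycle of length $\ell+2$. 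Therefore $J$ has no $p$--$q$ path of length at least $2a-1+\kappa$. This is exactly the hypothesis of \cref{thm:even-fixed-endpoint} when $\kappa=0$, and of \cref{cor:odd-fixed-slack} and \cref{lem:s3-rooted-paths} when $\kappa=1$.

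Apply the relevant estimate to each of the $\binom\rho2\le\binom{a-1}2$ pairs. This bounds the left side by $\binom{a-1}{2}(C^{(\kappa)}+\eps)n^{s-1}+O(n^{s-2})$. For $\kappa=1$, $s\ge4$, we take $C^{(1)}=(h+1)\fall rh+2(h+1)\fall{r-1}{h-1}$, using the slack form from \cref{cor:odd-fixed-slack} rather than $C_{a,s}^{(1)}$. For $\kappa=0$ we take $C^{(0)}=s\fall rh$. For $s=3$ we take $C^{(1)}=3r^2-r$.

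By \eqref{eq:F-diff}, the target increment is $s\binom a2\fall{a-2}{s-1}n^{s-1}$, so per pair the available budget is $\frac{\binom a2}{\binom{a-1}2}s\fall rh=\frac{a}{a-2}s\fall rh=\frac{r+2}{r}(h+1)\fall rh$. It therefore suffices to show that $C^{(\kappa)}$ is strictly less than this budget, and then choose $\eps$ below half the gap. In the case $\kappa=0$ the gap is $\frac2r(h+1)\fall rh>0$. In the case $s=3$ the gap is exactly \eqref{eq:s3-gap}, namely $4r-6>0$ since $r\ge2$.

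The remaining case, $\kappa=1$ and $s\ge4$, reduces to the identity
\[
\tfrac2r\fall rh=2\fall{r-1}{h-1}.
\]
This says that the budget is precisely $(h+1)\fall rh+2(h+1)\fall{r-1}{h-1}$, so the inequality would be an equality if we used \cref{cor:odd-fixed-slack} as stated. To get strictness, we instead go back to \cref{thm:mixed-master}, whose coefficient $\mu_{a,s}\fall r{h-1}$ is strictly less than $2(h+1)\fall{r-1}{h-1}$ by \cref{lem:strict-slack}. This produces a fixed positive gap $\delta$. We then choose $\eps<\delta/2$, set $\gamma_{a,s}^{(\kappa)}=\binom{a-1}{2}\delta/4$, and take $n_1$ large enough to absorb the $O(n^{s-2})$ terms. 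The only real obstacle is this bookkeeping: the per-pair target carries the factor $\binom a2/\binom{a-1}2$, and one must use the strict bound of \cref{lem:strict-slack} rather than the equality-saturated corollary. Separately, the case $\rho\le1$ is trivial, since then no cycle passes through $v$.
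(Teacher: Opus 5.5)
Your proposal is correct and follows the paper's own proof. Both use the same bijection between copies of $C_{2s+1}$ through $v$ and paths of length $2s-1$ joining two neighbours of $v$. Both get the same per-pair budget $\frac{a}{a-2}s\fall{a-2}{s-1}$ via $\binom{a-1}{2}\frac{a}{a-2}=\binom a2$, and the same three strict gaps: $2s\fall{a-3}{s-2}$ for $\kappa=0$, \cref{lem:strict-slack} for $\kappa=1$ and $s\ge4$, and \eqref{eq:s3-gap} for $s=3$. You are right that \cref{cor:odd-fixed-slack} by itself only reaches the budget with equality. At that point the paper likewise uses \cref{thm:mixed-master} together with \cref{lem:strict-slack}, exactly as your final argument does.
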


\begin{proof}
Put $h=s-1$ and $r=a-2$.
The coefficient available after summing over endpoint pairs is
\[
\frac a{a-2}s\fall{a-2}{s-1}
=\frac{r+2}{r}(h+1)\fall r h
=(h+1)\fall r h+2(h+1)\fall{r-1}{h-1}.
\]
Moreover, $C_{a,s}^{(\kappa)}$ is strictly smaller than this number.  For $\kappa=0$, the difference is $2s\fall{a-3}{s-2}>0$.  For $\kappa=1$ and $s\ge4$, this follows from \cref{lem:strict-slack}.  For $\kappa=1$ and $s=3$, it follows from \eqref{eq:s3-gap}.  Thus
\[
\eta_{a,s}^{(\kappa)}
\defeq\frac a{a-2}s\fall{a-2}{s-1}-C_{a,s}^{(\kappa)}>0.
\]

Put
\[
J=G-v,\qquad U=N_G(v),\qquad d_v=|U|\le a-1.
\]
Every copy of $C_{2s+1}$ containing $v$ is obtained uniquely by adjoining the two edges from $v$ to the endpoints of a path of length $2s-1$ in $J$ with distinct endpoints in $U$.  The graph $J$ contains no path of length at least $2a+\kappa-1$ with both endpoints in $U$, since such a path together with the two edges through $v$ would give a cycle of length at least $2a+\kappa+1$.

Choose the error parameter in the applicable fixed-endpoint estimate to be at most $\eta_{a,s}^{(\kappa)}/2$.  For $\kappa=0$, apply \cref{thm:even-fixed-endpoint} to every pair of distinct vertices in $U$.  For $\kappa=1$ and $s\ge4$, use \cref{thm:mixed-master} in the same way.  For $\kappa=1$ and $s=3$, apply \cref{lem:s3-rooted-paths} directly to $U$.  In the last case, the coefficient in that lemma is at most $C_{a,3}^{(1)}+\eta_{a,3}^{(1)}/2$, which equals the displayed coefficient below.  Thus, after summing over the unordered endpoint pairs when needed and using $\binom{a-1}{2}\frac a{a-2}=\binom a2$,
\begin{align}
&N(C_{2s+1},G)-N(C_{2s+1},G-v)\notag\\
&\quad\le
\binom{d_v}{2}
\left[
\frac a{a-2}s\fall{a-2}{s-1}-\frac{\eta_{a,s}^{(\kappa)}}2
\right]n^{s-1}+O_{a,s}(n^{s-2})\notag\\
&\quad\le
\left[
s\binom a2\fall{a-2}{s-1}
-\binom{a-1}{2}\frac{\eta_{a,s}^{(\kappa)}}2
\right]n^{s-1}+O_{a,s}(n^{s-2}). \label{eq:deletion-leading-reserve}
\end{align}
Put
\[
\rho_{a,s}^{(\kappa)}=\binom{a-1}{2}\frac{\eta_{a,s}^{(\kappa)}}2>0.
\]
The coefficient in \eqref{eq:deletion-leading-reserve} is smaller than the leading coefficient in \eqref{eq:F-diff} by $\rho_{a,s}^{(\kappa)}$.  The remaining error is $O_{a,s}(n^{s-2})$.  Hence, after increasing $n_1$, \eqref{eq:deletion-increment} holds with, for example,
\[
\gamma_{a,s}^{(\kappa)}=\frac{\rho_{a,s}^{(\kappa)}}2.
\]
\end{proof}
\section{High minimum degree and peeling}\label{sec:terminal}

The deletion inequalities in the preceding section apply while a vertex has degree at most $a-1$.  We now treat the remaining case of minimum degree at least $a$.  For a large two-connected graph, the stability theorem below reduces the problem to the two model graphs and four exceptional families.  We bound the exceptional families first and then obtain the required high-minimum-degree estimate.

We use the following convenient consequence of Theorem~4 of Zhu, Gy\H{o}ri, He, Lv, Salia and Xiao~\cite{ZhuEtAl2023}.  The formulation below groups the exceptional graphs in their theorem into the broader families \textup{(E1)}--\textup{(E4)}, which are sufficient for our counting arguments.

\begin{theorem}[Zhu, Gy\H{o}ri, He, Lv, Salia and Xiao~\cite{ZhuEtAl2023}]\label{thm:stability}
Let $G$ be a $2$-connected $n$-vertex graph with $n\ge\circum(G)+1$ and $\delta(G)=k\ge3$.  Then either $\circum(G)\ge2k+2$, or $G$ is a subgraph of $H(n,2k)$ or $H(n,2k+1)$, or one of the following holds:
\begin{enumerate}[label=(E\arabic*),leftmargin=2.7em]
\item There are distinct vertices $u,u_1,u_2$ such that every component $Q$ of $G-\{u,u_1,u_2\}$ has order at most $k-1$ and, for some $i\in\{1,2\}$, no vertex of $Q$ is adjacent to a vertex of $\{u,u_1,u_2\}\setminus\{u,u_i\}$.
\item There is a set $Z\subseteq V(G)$ with $|Z|=2$ such that every component of $G-Z$ has order at most $k$.
\item We have $k=4$ and $G\subseteq K_3\vee bK_2$ for some $b\ge0$.
\item We have $k=3$ and $G\subseteq K_2\vee(S_r\cup tK_2)$ for some $r,t\ge0$, where $S_r$ is a star with $r$ leaves and $r+2t=n-3$.
\end{enumerate}
\end{theorem}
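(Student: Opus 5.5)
The plan is not to reprove the stability theorem. I will derive this grouped formulation from Theorem~4 of~\cite{ZhuEtAl2023} by checking that each graph listed there falls under one of the alternatives above.

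First, I will fix the hypotheses so that they match theirs. Their theorem is stated for $2$-connected $n$-vertex graphs with minimum degree $k$ and circumference at most $2k+1$, with a lower bound on $n$. Our hypothesis $n\ge\circum(G)+1$, together with the case split ``$\circum(G)\ge2k+2$ or not'', places us exactly in their range. If $\circum(G)\ge2k+2$ we are done. Otherwise $\circum(G)\le 2k+1$, and their conclusion says that $G$ is a subgraph of one of two Kopylov-type models or of one of finitely many explicitly described exceptional families. I expect the two models to be precisely $K_k\vee\overline K_{n-k}$ and $K_k\vee(\overline K_{n-k}+e)$, that is, $H(n,2k)$ and $H(n,2k+1)$ from \eqref{eq:def-H} with $a=k$. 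I will check this identification first, including the single extra edge on the independent side in the odd case.

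Next, I will go through the exceptional families one at a time and exhibit the required structure. The small-$k$ families, built by joining $K_3$ or $K_2$ to a disjoint union of edges and one star, are copied verbatim as \textup{(E3)} and \textup{(E4)}. Here the only check is that the parameters satisfy $r+2t=n-3$, which holds because the join part has three vertices counted against $n$. The families consisting of many bounded pieces glued onto a $2$-vertex separator go to \textup{(E2)}. Their pieces have at most $k$ vertices each, since a larger piece together with the two separator vertices would have to contain a cycle longer than $2k+1$ by minimum degree. I will take $Z$ to be the two attachment vertices and read off the component orders from the definition of the family. The remaining families have a hub vertex $u$ and two further special vertices $u_1,u_2$, with each small component attached only to $u$ and one of the $u_i$. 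These go to \textup{(E1)}. I must verify two things for them: each component of $G-\{u,u_1,u_2\}$ has order at most $k-1$, and no component sees both $u_1$ and $u_2$.

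The main obstacle is this bookkeeping step. I need to be sure that every graph in their list, including degenerate parameter choices such as an empty star or $b=0$, lands in some alternative, and that the bounds ``order at most $k-1$'' in \textup{(E1)} and ``order at most $k$'' in \textup{(E2)} are not off by one relative to their definitions. If a family's stated bound is weaker than needed, I would recover the sharper bound from minimum degree $k$ and $\circum(G)\le2k+1$. A component $Q$ attached through two vertices and of order exceeding the stated bound contains, by \cref{lem:LiNing} applied to $Q$ plus its attachments, a path of length at least $k$ between its attachment vertices. Closing this path through the rest of $G$, which is possible by $2$-connectivity via \cref{lem:two-connections}, gives a cycle of length at least $2k+2$, contradicting $\circum(G)\le 2k+1$.
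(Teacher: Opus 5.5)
Your overall route is the paper's route. The paper gives no argument for this statement beyond quoting Theorem~4 of \cite{ZhuEtAl2023} and asserting that its exceptional graphs fit into the coarser families (E1)--(E4), so everything rests on reading their list correctly. The problem is the self-contained arguments you offer as a safety net: they would not work if you actually had to use them.

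\textbf{The fallback never uses the order of $Q$.} Let $Q$ be any component of $G-Z$ with $Z=\{z_1,z_2\}$. All neighbours of vertices of $Q$ lie in $Q\cup Z$. So \cref{lem:LiNing} with threshold $k$ gives a $z_1$--$z_2$ path of length at least $k$, whether $|Q|$ is $k-1$ or $n-2$. Returning through another component, the same lemma gives only another route of length at least $k$. You therefore reach a cycle of length $2k$, which $\circum(G)\le 2k+1$ permits.

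The deduction is in fact invalid. In $H(n,2k)$, take $Z$ to be two clique vertices. Then $G-Z$ is a single component of order $n-2>k$, attached through $Z$, yet the circumference is $2k$. To reach length $2k+2$ you need two internally disjoint routes through different pieces with total length at least $2k+2$, for instance two of length $k+1$. That requires the specific structure of each family, not a threshold-$k$ path lemma.

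\textbf{The (E2) remark fails too.} You say a piece of order more than $k$, together with the two separator vertices, ``would have to contain a cycle longer than $2k+1$''. A piece of order $k+1$ together with $Z$ has only $k+3<2k+2$ vertices, so it cannot contain such a cycle. Any long cycle must pass through two pieces.

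\textbf{Minor point on (E4).} The identity $r+2t=n-3$ comes from the two vertices of $K_2$ together with the centre of $S_r$, not from a three-vertex join part.

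So the proposal is exactly as strong as the citation. Drop the fallback and check the regrouping directly against the family definitions in \cite{ZhuEtAl2023}, which is what the paper implicitly relies on.
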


\begin{lemma}\label{lem:exceptional-count}
Fix $a>s\ge3$ and $\kappa\in\{0,1\}$.  Let $E_n$ be an $n$-vertex graph in one of the exceptional families in \cref{thm:stability} with $k=a$.  If $n$ is sufficiently large, then
\[
N(C_{2s+1},E_n)\le F_{a,s}^{(\kappa)}(n).
\]
\end{lemma}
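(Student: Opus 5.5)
We compare leading orders in $n$. By \cref{lem:candidate-count}, $F_{a,s}^{(\kappa)}(n)=\frac{\fall a{s+1}}2n^s+O(n^{s-1})$. The plan is to show that every exceptional family has either $O(n^{s-1})$ copies of $C_{2s+1}$, or at most $c\,n^s$ copies with an explicit constant $c<\fall a{s+1}/2$. The key parameter is the bounded set $X$ of vertices through which every cycle must repeatedly pass, namely the set $\{u,u_1,u_2\}$, the set $Z$, or the clique part $K_3$ or $K_2$. A copy of $C_{2s+1}$ has $2s+1$ vertices. Each time the cycle leaves $X$ it enters one component of $G-X$ and then returns to $X$. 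So the number of components visited is at most $|X|$, and each visited component has bounded size, at most $k$.

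\emph{(E2).} Here $|X|=2$, so a cycle meets at most $2$ components of $G-Z$. Each component has order at most $a$, so the cycle is confined to a set of at most $2a+2$ vertices. There are $O(n^2)$ choices of the two components, so $N(C_{2s+1},E_n)=O_a(n^2)$. Since $s\ge3$, this is $o(n^s)$.

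\emph{(E1).} Here $|X|=3$, so a cycle uses at most $3$ components and $N=O(n^3)$. When $s\ge4$ this is $o(n^s)$. When $s=3$ (so $a\ge4$) a finer argument is needed. Use the extra condition: every component $Q$ attaches only to $\{u,u_1\}$ or only to $\{u,u_2\}$. A cycle that visits three components must enter and leave each through two distinct vertices of $X$. A component of type $\{u,u_1\}$ uses both $u$ and $u_1$, so the three excursions would need three disjoint pairs drawn from a $3$-set. This is impossible. Hence at most two components are visited, and $N=O(n^2)=o(n^3)$.

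\emph{(E3) and (E4).} Here $a=4$ or $a=3$. For (E4) we need $a>s\ge3$, so $a=3$ cannot occur; only $a=4$, $s=3$ remains, i.e.\ $C_7$ in a subgraph of $K_3\vee bK_2$. Count the cycles: a $C_7$ consists of at most $3$ excursions out of the $K_3$, one through each gap between consecutive clique vertices on the cycle. Each excursion is a single vertex or an edge of some $K_2$. A cycle using $m$ clique vertices and $7-m$ outside vertices in $j\le m$ excursions has $O(n^j)$ choices. To reach the top order $n^3$ we need $m=3$ and $j=3$. Then the $4$ outside vertices are split as $1+1+2$, giving roughly $3!/2\cdot 3\cdot n^2\cdot (n/2)$ cycles, i.e.\ order $\frac{9}{2}n^3$ up to lower-order terms. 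Compare this with $\fall 44/2\,n^3=12n^3$. This is smaller for large $n$.

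\emph{Main obstacle.} The delicate part is (E1) with $s=3$: we must verify that the attachment condition really forbids three excursions, taking care of components attached to a single vertex of $X$ (which cannot host an excursion in a cycle). We also need to check the bookkeeping of the constant in (E3) against $F$. Once every exceptional count is $(c+o(1))n^s$ with $c<\fall a{s+1}/2$, or is $O(n^{s-1})$, the inequality holds for all large $n$.
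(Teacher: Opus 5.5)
Your outline is the paper's proof. A cycle meets at most as many components of $E_n-X$ as $C-X$ has path components, so (E1) and (E2) give $O_{a,s}(n^2)$ copies. Family (E4) is excluded by $a\ge4$, (E3) forces $a=4$ and $s=3$, and that case is settled by comparing the leading constant of $N(C_7,K_3\vee bK_2)$ with $(4)_4/2=12$. Two steps, however, are justified incorrectly and need repair.

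\emph{The (E1) argument.} For a cycle through all of $\{u,u_1,u_2\}$ with three excursions, the excursions join the consecutive pairs $\{x_1,x_2\}$, $\{x_2,x_3\}$, $\{x_3,x_1\}$. These pairs are never disjoint. Your ``three disjoint pairs'' reasoning would equally rule out two excursions, and two do occur: go from $u$ through a path in $Q_1$ to $u_1$, then through a path in $Q_2$ back to $u$, where $Q_1,Q_2$ are both attached to $\{u,u_1\}$.

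The correct reason is the paper's. A path component of $C-X$ whose two cycle-neighbours are distinct vertices of $X$ must join $u$ to $u_1$ or $u$ to $u_2$. It therefore uses one of the two cycle edges at $u$, so there are at most two such excursions. A path component whose two cycle-neighbours coincide closes up the whole cycle. This gives $O_{a,s}(n^2)$ for every $s$, so the intermediate $O(n^3)$ bound and the separate treatment of $s=3$ are unnecessary.

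\emph{The (E3) count.} The expression $3!/2\cdot 3\cdot n^2\cdot(n/2)$ is not a correct count. Once all three clique vertices lie on the cycle, their unoriented cyclic order is unique. Choosing the gap that carries the two-vertex excursion (your factor $3$) already fixes the configuration, so the factor $3!/2$ counts it a second time. In the other direction, the factor $2$ for deciding which end of that $K_2$-edge is adjacent to which clique vertex is missing.

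To leading order the correct count is $3\cdot 2b\cdot(2b)^2=24b^3=(3+o(1))n^3$. This agrees with the paper's bound $12b(b-1)+24b(b-1)^2$, where the first term is the $2+2$ split. Your $\tfrac92n^3$ comes from an overcount by $3$ and an undercount by $2$. It happens to exceed the true value, so the conclusion against $12n^3$ survives. Still, this comparison of constants is the entire content of the $s=3$ case, so the count has to be carried out correctly.
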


\begin{proof}
Let $Z$ be the distinguished set in the description of the relevant family.  For a cycle $C$, every component of $C-Z$ is a path contained in one component of $E_n-Z$.

For family~\textup{(E1)}, let $Z=\{u,u_1,u_2\}$.  Every component of $E_n-Z$ is adjacent either only to $\{u,u_1\}$ or only to $\{u,u_2\}$.  If a path component of $C-Z$ has both ends adjacent to the same vertex of $Z$, then it uses both cycle edges at that vertex and $C$ meets no other component of $E_n-Z$.  Otherwise every path of the first kind joins $u$ to $u_1$, and every path of the second kind joins $u$ to $u_2$.  Since $C$ uses only two edges at $u$, it meets at most two components of $E_n-Z$.  There are $O(n^2)$ ways to choose these components.  Their orders are bounded in terms of $a$, so after they are chosen there are only $O_{a,s}(1)$ possible vertex sets and cyclic orders.  Hence $N(C_{2s+1},E_n)=O_{a,s}(n^2)$.

For family~\textup{(E2)}, deletion of the two vertices in $Z$ leaves components of bounded order.  The graph $C-Z$ has at most two path components, and hence $C$ meets at most two components of $E_n-Z$.  The same count gives $O_{a,s}(n^2)$ copies.

The family $K_3\vee bK_2$ occurs only for $a=4$.  Let $Z$ be its $K_3$.  The graph $C-Z$ has at most three path components, so $C$ uses vertices from at most three of the $K_2$ components.  Consequently every fixed odd cycle has $O_s(n^3)$ copies.

If $s\ge4$, these estimates are $o(n^s)$, whereas \eqref{eq:F-asymp} is $\Theta_{a,s}(n^s)$.  It remains to treat $s=3$ in $K_3\vee bK_2$.  Here $n=2b+3$.  If a $C_7$ used at most two vertices of $Z$, then $C-Z$ would have at most two path components, each with at most two vertices, and the cycle would have order at most six.  Thus every $C_7$ uses all three vertices of $Z$ and four vertices outside $Z$.  The orders of the path components of $C-Z$ are therefore either $2,2$ or $2,1,1$.  In the first case, choose the unused pair of consecutive vertices of $Z$, the two ordered $K_2$ components, and the direction of each of the two edges.  This gives at most $12b(b-1)$ cycles.  In the second case, choose which of the three paths has order two, its $K_2$ component, the components containing the two single vertices, and the vertex or direction used in each component.  The two single vertices may come from the same $K_2$.  This gives at most $24b(b-1)^2$ cycles.  Thus
\[
N(C_7,K_3\vee bK_2)\le12b(b-1)+24b(b-1)^2=24b^3+O(b^2).
\]
On the other hand,
\[
F_{4,3}^{(\kappa)}(2b+3)=12(2b+3)^3+O(b^2)=96b^3+O(b^2)
\]
by \eqref{eq:F-asymp}.  The desired inequality follows for large $b$.
\end{proof}

\begin{lemma}\label{lem:even-exceptional-count}
Fix $a\ge s\ge3$ and $\kappa\in\{0,1\}$.  If an $n$-vertex graph $G$ belongs to an exceptional family in \cref{thm:stability} with $k=a$, then, for all sufficiently large $n$,
\[
N(C_{2s},G)\le E_{a,s}^{(\kappa)}(n).
\]
\end{lemma}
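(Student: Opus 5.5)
We will follow the proof of \cref{lem:exceptional-count}, replacing the odd cycle $C_{2s+1}$ by $C_{2s}$ and the target $F_{a,s}^{(\kappa)}(n)=\Theta(n^s)$ by $E_{a,s}^{(\kappa)}(n)=\frac{\fall as}{2s}n^s+O(n^{s-1})$ from \eqref{eq:E-asymp}. As there, let $Z$ be the distinguished set of the family. Every component of $C-Z$ is a path lying inside one component of $G-Z$. In families (E1)--(E4) these components have bounded order, except possibly the star in (E4).

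For (E1), with $Z=\{u,u_1,u_2\}$, we repeat the earlier argument verbatim. A cycle meets at most two components of $G-Z$, because each path component of $C-Z$ joins $u$ to $u_1$ or $u$ to $u_2$, or else it is the only one. This gives $O_{a,s}(n^2)$ copies. For (E2), $|Z|=2$ gives at most two path components, hence again $O_{a,s}(n^2)$ copies. For (E3), with $a=4$ and $Z=K_3$, there are at most three path components, hence $O_s(n^3)$ copies. For (E4), with $a=3$ and $Z$ the $K_2$, a cycle meets at most two components of $G-Z$. The star component can contribute only a path with at most three vertices, through its center, and the center is a single vertex. Hence at most two vertices are chosen freely, which gives $O(n^2)$ copies.

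Since $s\ge3$, an $O(n^2)$ bound is $o(n^s)$, which settles (E1), (E2) and (E4). The bound $O(n^3)$ settles (E3) whenever $s\ge4$. The case (E4) needs $a=3$ and therefore $s=3$, which is covered by the $O(n^2)$ bound.

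The only remaining case is (E3) with $a=4$, $s=3$, that is, $N(C_6,K_3\vee bK_2)$ with $n=2b+3$. Here we need the leading constants. The target is $E_{4,3}^{(\kappa)}(n)=\frac{24}{6}n^3+O(n^2)=4(2b)^3+O(b^2)=32b^3+O(b^2)$. A $C_6$ that uses three free components must have $C-Z$ consisting of exactly three path components, one between each pair of consecutive $Z$-vertices in the cyclic order. So it uses all three vertices of $Z$ together with three single vertices taken from three components, or else vertices from fewer components, which gives only $O(b^2)$ such cycles. There is one cyclic order of $Z$ up to reflection. The three gaps receive three ordered distinct components, $b^3$ choices up to lower order, and each component offers $2$ choices of vertex. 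This gives $8b^3+O(b^2)$ cycles. Single vertices from the same $K_2$ are at most $O(b^2)$. Since $8b^3<32b^3$, the inequality holds for large $b$.

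The main obstacle is only this leading-constant comparison in (E3) with $s=3$, together with checking that cycles using just two vertices of $Z$ meet at most two components and so are $O(b^2)$. The other cases are routine repetitions of the odd-cycle count.
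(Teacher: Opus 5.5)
Your treatment of (E1), (E2) and (E3) matches the paper's, including the count $8\fall b3=48\binom b3$ of copies of $C_6$ in $K_3\vee bK_2$. The gap is in (E4), which you settle with an $O(n^2)$ bound. That bound is false.

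The star $S_r$ has unbounded order, so choosing the component does not fix a bounded vertex set. A path $\ell c\ell'$ through the center contains two freely chosen leaves. The second path component of $C-Z$ may be a further free leaf, or it may lie in one of the $t$ copies of $K_2$. With $Z=\{z_1,z_2\}$, the cycles $z_1\ell_ic\ell_jz_2\ell_kz_1$ alone give $\Theta(r^3)$ copies.

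In fact, for $t=0$ we have $K_2\vee S_{n-3}=K_3\vee\overline K_{n-3}=H(n,6)$. So family (E4) contains the extremal graph itself, and $N(C_6,G)$ can equal $E_{3,3}^{(0)}(n)=\fall{n-3}3$. No asymptotic argument can settle this case. For bounded $t\ge1$ the two counts agree in the $n^3$ term and differ only at order $tn^2$. The inequality must therefore be proved exactly and uniformly over all splits $r+2t=n-3$.

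The paper does this by an exact count in $M_{r,t}=K_2\vee(S_r\cup tK_2)$. The longest path in $S_r\cup tK_2$ has three vertices. Hence every $C_6$ uses both vertices of the $K_2$, nonconsecutively, and $C-Z$ consists of two paths with $3+1$ or $2+2$ vertices. This gives
\[
N(C_6,M_{r,t})=\fall r3+2t(r^2+r+t-1),
\]
while $N(C_6,H(n,6))=\fall{r+2t}3$. Therefore
\[
\fall{r+2t}3-N(C_6,M_{r,t})=2t\bigl(2r^2+6rt-7r+4t^2-7t+3\bigr)\ge0.
\]
The bracket is nonnegative because it equals $2r^2-r$ when $t=1$, equals $(4t-3)(t-1)$ when $r=0$, and is increasing in $r$ when $t\ge2$. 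Finally, $E_{3,3}^{(0)}\le E_{3,3}^{(1)}$ because $H(n,6)\subseteq H(n,7)$. With this exact comparison in place of your (E4) paragraph, the proof is complete.
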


\begin{proof}
In the first two exceptional families, a cycle meets at most two components outside the fixed set, so they contain $O_{a,s}(n^2)$ copies of $C_{2s}$.  In family~\textup{(E3)}, a cycle meets at most three of the $K_2$ components, giving $O_s(n^3)$ copies.  Family~\textup{(E4)} occurs only when $a=3$.  Thus \eqref{eq:E-asymp} proves the result when $s\ge4$.

Let $s=3$.  The first two families still give $O_a(n^2)$.  In family~\textup{(E3)}, $a=4$ and $n=2b+3$.  A $C_6$ meeting three $K_2$ components uses all three vertices of the fixed $K_3$ and one vertex from each chosen component.  For each triple of components there are at most $2^3\cdot6=48$ such cycles.  All other cycles meet at most two components.  Hence
\[
N(C_6,K_3\vee bK_2)\le48\binom b3+O(b^2)=n^3+O(n^2),
\]
whereas $E_{4,3}^{(\kappa)}(n)=4n^3+O(n^2)$.

Finally, let $G\subseteq M_{r,t}=K_2\vee(S_r\cup tK_2)$ in family~\textup{(E4)}.  Dividing the cycles according to their use of the $t$ independent edges gives
\[
N(C_6,M_{r,t})=r(r-1)(r-2)+2t(r^2+r+t-1).
\]
Since $|V(M_{r,t})|=r+2t+3$, the number of $C_6$ in $H(|V(M_{r,t})|,6)$ is $\fall{r+2t}3$.  Subtracting the last display gives
\[
2t(2r^2+6rt-7r+4t^2-7t+3)\ge0.
\]
For $t\ge1$, the expression in parentheses is increasing in $r\ge1$ except possibly when $t=1$, where it equals $2r^2-r$.  The cases $r=0$ are immediate.  Thus $N(C_6,G)\le E_{3,3}^{(0)}(n)\le E_{3,3}^{(\kappa)}(n)$.
\end{proof}

\begin{lemma}\label{lem:even-model-reduction}
Let $a\ge3$ and let $G$ be a sufficiently large subgraph of $H(n,2a+1)$ with $\delta(G)\ge a$ and $\circum(G)\le2a$.  Then $G\subseteq H(n,2a)$.
\end{lemma}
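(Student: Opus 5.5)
Write $V(H(n,2a+1))=A\sqcup I$ with $|A|=a$ and $I$ independent except for the fixed edge $e=y_1y_2$. Since $G\subseteq H(n,2a+1)$, the only possible edge of $G$ inside $I$ is $e$. If $e\notin E(G)$, then $G\subseteq K_a\vee\overline K_{n-a}=H(n,2a)$ and we are done. So the plan is to assume $y_1y_2\in E(G)$ and build a cycle of length $2a+1$ in $G$, contradicting $\circum(G)\le2a$.

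The minimum-degree condition is what makes this work. Every vertex $w\in I\setminus\{y_1,y_2\}$ has all its neighbours in $A$, so $d_G(w)\ge a=|A|$ forces $N_G(w)=A$. Hence $G$ contains the complete bipartite graph between $A$ and $I'=I\setminus\{y_1,y_2\}$, and $|I'|=n-a-2$ is large.

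The vertices $y_1,y_2$ each have at most one neighbour in $I$, namely each other. Thus $d_G(y_i)\ge a$ gives $|N_G(y_i)\cap A|\ge a-1$. Since $a\ge3$, the sets $N(y_1)\cap A$ and $N(y_2)\cap A$ each miss at most one vertex of $A$, so we can choose distinct $x_1\in N(y_1)\cap A$ and $x_a\in N(y_2)\cap A$. Order $A$ as $x_1,x_2,\dots,x_a$, placing the remaining $a-2$ vertices arbitrarily in the middle. Pick distinct $w_1,\dots,w_{a-1}\in I'$, possible for large $n$, and form the closed walk
\[
y_1\,x_1\,w_1\,x_2\,w_2\cdots w_{a-1}\,x_a\,y_2\,y_1 .
\]
This is a cycle: each $w_i$ is adjacent to all of $A$, $y_1x_1$ and $y_2x_a$ are edges by the choice of $x_1,x_a$, and $y_1y_2=e$. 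It has $a$ vertices from $A$, $a-1$ from $I'$, and the two vertices $y_1,y_2$, so $2a+1$ vertices in total. This contradicts $\circum(G)\le2a$, so $e\notin E(G)$ and $G\subseteq H(n,2a)$.

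The one delicate point is finding distinct endpoints $x_1\ne x_a$ in $A$. This is where $\delta(G)\ge a$ (rather than $a-1$) and $a\ge3$ are used, since together they ensure each of $y_1,y_2$ misses at most one vertex of $A$. The rest of the argument is routine.
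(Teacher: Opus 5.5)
Your proof is correct and follows the paper's argument: you assume the extra edge $y_1y_2$ is present, use $\delta(G)\ge a$ to make every other vertex of $I$ complete to $A$ and to give $y_1,y_2$ distinct neighbours in $A$ (which needs $a\ge3$), and then close a cycle of length $2a+1$ that alternates through $A$. This contradicts $\circum(G)\le2a$.
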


\begin{proof}
Let $A$ be the clique side and let $xy$ be the only possible edge in the other side.  If $xy\notin E(G)$, the conclusion follows.  Suppose $xy\in E(G)$.  Each of $x,y$ has at least $a-1$ neighbors in $A$, and every other vertex outside $A$ is adjacent to all vertices of $A$, since its degree is at least $a$.  Choose distinct $c_1\in N_G(y)\cap A$ and $c_a\in N_G(x)\cap A$, order the other clique vertices as $c_2,\ldots,c_{a-1}$, and choose distinct vertices $z_1,\ldots,z_{a-1}$ outside $A\cup\{x,y\}$.  Then
\[
xyc_1z_1c_2z_2\cdots c_{a-1}z_{a-1}c_ax
\]
is a cycle of length $2a+1$, a contradiction.
\end{proof}

\begin{lemma}\label{lem:odd-terminal}
Fix integers $a>s\ge3$ and $\kappa\in\{0,1\}$.  For all sufficiently large $n$, every $2$-connected $n$-vertex graph $G$ satisfying
\[
\circum(G)\le2a+\kappa,
\qquad \delta(G)\ge a
\]
satisfies
\[
N(C_{2s+1},G)\le F_{a,s}^{(\kappa)}(n).
\]
\end{lemma}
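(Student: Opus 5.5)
We will apply \cref{thm:stability} with $k=\delta(G)\ge a$.  The first task is to arrange its hypotheses.  Since $n$ is large, $n\ge\circum(G)+1$.  Since $\delta(G)\ge a\ge s+1\ge4$, we also have $k\ge3$.  If $k\ge a+1$, then $2k+2\ge 2a+4>2a+\kappa$.  We then derive a contradiction from the conclusion of the theorem, as follows.  The alternative $\circum(G)\ge2k+2$ is excluded directly.  In each exceptional family, and in subgraphs of $H(n,2k)$ or $H(n,2k+1)$, the large minimum degree forces a long cycle.  We find such a cycle by an explicit alternating construction, as in \cref{lem:even-model-reduction}.  In a subgraph of $H(n,2k+\kappa')$ with minimum degree $k$, every outside vertex other than $x,y$ is adjacent to the whole clique side.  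This gives an alternating cycle of length $2k\ge2a+2$, a contradiction.  In the exceptional families (E1) and (E2), the components have order at most $k$, so a vertex of degree $k$ inside a component must send edges to $Z$.  I expect a routine argument to give cycles of length $\ge 2a+2$ there as well.  Families (E3) and (E4) have fixed $k\in\{3,4\}$ and are handled the same way.  A cleaner route may be to first note that the exceptional families are bounded by \cref{lem:exceptional-count} regardless of $k$.  Indeed, the proof of that lemma only uses the $O(n^2)$ or $O(n^3)$ structure, which for $k>a$ is still $o(n^s)$ when $s\ge4$.  The case $s=3$ needs (E3) with $k=4$, forcing $a\le3<s+1$, which is impossible.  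So I will proceed that way and reduce to $k=a$ or to the model case.

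With $k=a$, \cref{thm:stability} gives three alternatives.  First, $\circum(G)\ge2a+2$, which is impossible.  Second, $G$ lies in an exceptional family, where \cref{lem:exceptional-count} gives $N(C_{2s+1},G)\le F^{(\kappa)}_{a,s}(n)$ directly.  Third, $G\subseteq H(n,2a)$ or $G\subseteq H(n,2a+1)$.  For the third alternative, when $\kappa=1$, monotonicity under subgraphs gives $N(C_{2s+1},G)\le N(C_{2s+1},H(n,2a+1))=F^{(1)}_{a,s}(n)$.  When $\kappa=0$, I use \cref{lem:even-model-reduction}: since $\delta(G)\ge a$ and $\circum(G)\le 2a$, we get $G\subseteq H(n,2a)$, hence $N(C_{2s+1},G)\le F^{(0)}_{a,s}(n)$.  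For $k>a$ (if it is not excluded outright), the containment $G\subseteq H(n,2k+\kappa')$ with $\delta\ge k$ forces a cycle of length $2k\ge2a+2$, which is again impossible.

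The main obstacle is the case $\delta(G)>a$.  There, \cref{thm:stability} is applied with a larger $k$, and the exceptional-family bounds must be rechecked.  For (E1) and (E2) the count $O_{k,s}(n^2)$ still holds, and $k\le n$ is not a priori bounded.  However, $\circum(G)\le 2a+1$ together with Dirac-type bounds gives $k\le a+1$.  In fact, a $2$-connected graph with $n\ge 2k$ has a cycle of length at least $2k$ by Dirac's theorem, so $2k\le 2a+1$ and hence $k\le a$.  This settles $k=a$ outright, and I would open the proof with this observation.  It makes the whole argument a short case check.
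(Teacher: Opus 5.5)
Your final argument is correct and matches the paper's proof: use Dirac's bound to get $\delta(G)=a$, then apply \cref{thm:stability} with $k=a$. There the long-cycle alternative is excluded, the model alternatives are handled by subgraph monotonicity ($\kappa=1$) or \cref{lem:even-model-reduction} ($\kappa=0$), and the exceptional families by \cref{lem:exceptional-count}. You can drop the exploratory treatment of $\delta(G)>a$ in your first and third paragraphs. The bound $\circum(G)\ge\min\{n,2\delta(G)\}$ with $n>2a+1$ rules that case out at once, and this $\min$ form also covers $n<2\delta(G)$, which your phrasing ``with $n\ge 2k$'' leaves implicit.
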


\begin{proof}
For large $n$, Dirac's theorem~\cite{Dirac1952} gives
\[
\circum(G)\ge\min\{n,2\delta(G)\}.
\]
Thus $\delta(G)\ge a+1$ would imply $\circum(G)\ge2a+2$, a contradiction.  Hence $\delta(G)=a$.  Since $n>2a+1\ge\circum(G)$, \cref{thm:stability} applies with $k=a$.

The alternative $\circum(G)\ge2a+2$ is excluded.  If $\kappa=0$, \cref{lem:even-model-reduction} reduces the two model alternatives to $H(n,2a)$.  If $\kappa=1$, both model graphs are subgraphs of $H(n,2a+1)$.  All remaining alternatives are handled by \cref{lem:exceptional-count}.
\end{proof}

\begin{lemma}\label{lem:even-terminal}
Fix $a\ge s\ge3$ and $\kappa\in\{0,1\}$.  For all sufficiently large $n$, every $2$-connected $n$-vertex graph $G$ with $\circum(G)\le2a+\kappa$ and $\delta(G)\ge a$ satisfies
\[
N(C_{2s},G)\le E_{a,s}^{(\kappa)}(n).
\]
\end{lemma}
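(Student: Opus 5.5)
The proof should mirror that of \cref{lem:odd-terminal}, with \cref{lem:even-exceptional-count} in place of \cref{lem:exceptional-count}.

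\textbf{Pinning the minimum degree.} Take $n$ large, in particular $n>2a+1$. By Dirac's theorem~\cite{Dirac1952}, $\circum(G)\ge\min\{n,2\delta(G)\}$. If $\delta(G)\ge a+1$, this gives $\circum(G)\ge 2a+2>2a+\kappa$, a contradiction. Hence $\delta(G)=a$. Since $a\ge s\ge3$, we have $k=a\ge3$, and $n\ge 2a+2\ge\circum(G)+1$. So \cref{thm:stability} applies to $G$ with $k=a$.

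\textbf{Going through the alternatives.} The alternative $\circum(G)\ge 2a+2$ is excluded by hypothesis. If $G$ is an exceptional graph of type (E1)--(E4) with $k=a$, then \cref{lem:even-exceptional-count} gives $N(C_{2s},G)\le E^{(\kappa)}_{a,s}(n)$ for large $n$. The remaining alternative is that $G$ is a subgraph of $H(n,2a)$ or of $H(n,2a+1)$.
\begin{itemize}
\item If $\kappa=1$, both model graphs are subgraphs of $H(n,2a+1)$. Monotonicity of copy counts under taking subgraphs then gives $N(C_{2s},G)\le N(C_{2s},H(n,2a+1))=E^{(1)}_{a,s}(n)$.
\item If $\kappa=0$ and $G\subseteq H(n,2a+1)$, then \cref{lem:even-model-reduction} applies, because $\delta(G)\ge a$, $\circum(G)\le 2a$ and $n$ is large. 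It yields $G\subseteq H(n,2a)$, so $N(C_{2s},G)\le E^{(0)}_{a,s}(n)$.
\end{itemize}

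\textbf{Where the difficulty lies.} This is essentially bookkeeping; the substantive inputs are \cref{thm:stability} and the exceptional counts, both already established. Two points need care. First, (E4) requires $k=3$, which forces $a=s=3$, that is, counting $C_6$. This case is covered in \cref{lem:even-exceptional-count} through the bound $E^{(0)}_{3,3}\le E^{(\kappa)}_{3,3}$. Second, the hypotheses of \cref{thm:stability} ($k\ge3$ and $n\ge\circum(G)+1$) must hold, which they do because $a\ge3$ and $n$ is large.
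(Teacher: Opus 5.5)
Your proof is correct and follows the paper's argument step for step. Dirac's theorem forces $\delta(G)=a$, \cref{thm:stability} is applied with $k=a$, the model alternatives are handled by subgraph monotonicity (via \cref{lem:even-model-reduction} when $\kappa=0$), the exceptional families are handled by \cref{lem:even-exceptional-count}, and you also check explicitly that $k\ge3$ and $n\ge\circum(G)+1$, which the paper leaves implicit.
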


\begin{proof}
Dirac's theorem~\cite{Dirac1952} gives $\circum(G)\ge\min\{n,2\delta(G)\}$.  Hence $\delta(G)\ge a+1$ would give a cycle of length at least $2a+2$, so $\delta(G)=a$.  Apply \cref{thm:stability}.  The long-cycle alternative is excluded.  If $\kappa=0$, \cref{lem:even-model-reduction} reduces the model alternatives to $H(n,2a)$.  If $\kappa=1$, both are subgraphs of $H(n,2a+1)$.  The exceptional alternatives are bounded by \cref{lem:even-exceptional-count}.
\end{proof}

\subsection{Peeling and completion of the extremal theorem}\label{sec:peeling}

The preceding lemmas give the required bound for large two-connected graphs of minimum degree at least $a$.  We now extend this bound to arbitrary graphs.  \Cref{lem:telescoping} bounds the total loss along a sequence of deletion steps, and \cref{lem:terminal-structure} shows that repeated vertex and terminal-block deletions end with an empty or two-connected graph.  \Cref{lem:two-connected-framework} combines these two lemmas with the low-degree deletion inequality and the high-minimum-degree estimate.  Finally, \cref{lem:components,lem:blocks-concentration,lem:general-framework} combine components and blocks and pass from two-connected graphs to arbitrary graphs.

For the peeling arguments, extend each function $F_{a,s}^{(\kappa)}$ and $E_{a,s}^{(\kappa)}$ to the finitely many integers below the natural range of the construction, in any nonnegative way with
\[
F_{a,s}^{(\kappa)}(0)=F_{a,s}^{(\kappa)}(1)
=E_{a,s}^{(\kappa)}(0)=E_{a,s}^{(\kappa)}(1)=0.
\]
This does not affect any sufficiently large value or any asymptotic formula.

For an integer $d\ge2$, call a function $\Phi:\N\to\R_{\ge0}$ \emph{admissible of degree $d$} if $\Phi(0)=\Phi(1)=0$ and, for some constant $c>0$,
\[
\Phi(n)=cn^d+O(n^{d-1}),
\qquad
\Phi(n)-\Phi(n-1)=cdn^{d-1}+O(n^{d-2}).
\]

\begin{lemma}\label{lem:telescoping}
Let $\Phi$ be admissible of degree $d$.  Fix $D\ge2$ and $K\ge0$.  There exists $N_{\mathrm{peel}}$ such that the following holds.  Let
\[
q_0>q_1>\cdots>q_t\ge0,
\qquad q_i\ge N_{\mathrm{peel}}\quad(i<t),
\]
and let $A_i\ge0$.  Suppose that every step is of one of the following forms.
\begin{enumerate}[label=(\roman*),leftmargin=2.4em]
\item $q_{i+1}=q_i-1$ and
\[
A_i\le\Phi(q_i)-\Phi(q_i-1).
\]
\item $q_{i+1}=q_i-r_i$, where $1\le r_i\le D$, and $A_i\le K$.
\item There are $b_i>D$ and $\varepsilon_i\in\{0,1\}$ such that
\[
q_{i+1}=q_i-b_i+\varepsilon_i,
\qquad A_i\le\Phi(b_i).
\]
When $\varepsilon_i=1$, assume $q_{i+1}\ge1$.
\end{enumerate}
Then
\[
\sum_{i=0}^{t-1}A_i+\Phi(q_t)\le\Phi(q_0).
\]
If, in addition, for some $R\ge0$,
\[
A_0+\Phi(q_1)+R\le\Phi(q_0),
\]
then
\[
\sum_{i=0}^{t-1}A_i+\Phi(q_t)+R\le\Phi(q_0).
\]
\end{lemma}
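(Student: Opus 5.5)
The plan is to prove a per-step inequality
\[
A_i+\Phi(q_{i+1})\le\Phi(q_i)\qquad(0\le i<t)
\]
and then telescope. Summing over $i$ gives $\sum_iA_i+\Phi(q_t)\le\Phi(q_0)$. For the strengthened statement, I will use the hypothesis $A_0+\Phi(q_1)+R\le\Phi(q_0)$ in place of the per-step bound at $i=0$, keep the per-step bounds for $i\ge1$, and telescope in the same way. So everything reduces to checking the three step types, each at a point $q_i\ge N_{\mathrm{peel}}$.

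\emph{Step (i).} This is immediate: $\Phi(q_{i+1})=\Phi(q_i-1)$, so the hypothesis is exactly the per-step inequality.

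\emph{Step (ii).} Here $q_{i+1}=q_i-r_i$ with $1\le r_i\le D$. By admissibility, $\Phi(m)-\Phi(m-1)\ge \tfrac c2 dm^{d-1}$ for all $m\ge m_0$, and this tends to infinity since $d\ge2$. Choose $N_{\mathrm{peel}}\ge D+m_0$ so large that $\Phi(m)-\Phi(m-1)\ge K$ whenever $m\ge N_{\mathrm{peel}}-D$. Then
\[
\Phi(q_i)-\Phi(q_i-r_i)\ge \Phi(q_i)-\Phi(q_i-1)\ge K\ge A_i,
\]
using that $\Phi$ is increasing on the range $[q_i-r_i,q_i]$. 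That range lies above $m_0$ because $q_i\ge N_{\mathrm{peel}}$.

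\emph{Step (iii).} This is the main point. We need $\Phi(b)+\Phi(q-b+\varepsilon)\le\Phi(q)$ for $D<b$, $q=q_i\ge N_{\mathrm{peel}}$ and $0\le q-b+\varepsilon\le q-1$; this is a superadditivity statement. For $\varepsilon=0$ it is $\Phi(b)+\Phi(q-b)\le\Phi(q)$. For $\varepsilon=1$ it is $\Phi(b)+\Phi(q-b+1)\le\Phi(q)$, which is the cut-vertex overlap. I would prove the $\varepsilon=1$ version, since it is stronger: the argument below gives $\Phi(b)+\Phi(q-b+1)\le\Phi(q)$ whenever $b\ge 2$, $q-b+1\ge 1$ and $q\ge N_{\mathrm{peel}}$. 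The $\varepsilon=0$ case then follows from monotonicity, because $\Phi(q-b)\le\Phi(q-b+1)$ once the argument is large. When $q-b=0$ it follows from $\Phi(0)=0$, and when $b=q+1$ with $\varepsilon=1$, the condition $q_{i+1}\ge1$ excludes it.

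To prove the $\varepsilon=1$ inequality, set $x=b-1\ge D\ge1$ and $y=q-b\ge0$, so that $q=x+y+1$. The claim becomes
\[
\Phi(x+1)+\Phi(y+1)\le\Phi(x+y+1).
\]
If $y=0$ this is trivial because $\Phi(1)=0$, and symmetrically if $x=0$. Otherwise, use the bounds $|\Phi(m)-cm^d|\le C_1m^{d-1}$ and $\Phi(m)\le C_2m^d$ for all $m\ge1$, where the constants come from admissibility together with finitely many small values. I split into two cases with a threshold $m_1$.

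\emph{Case $\min(x,y)\ge m_1$.} The inequality
\[
(x+y+1)^d\ge (x+1)^d+(y+1)^d+ d\,x\,y\,(\cdots)
\]
gives a gain of order $(x+y)^{d-1}\min(x,y)$. This beats the error $O\bigl((x+y)^{d-1}\bigr)$ once $\min(x,y)$ is large.

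\emph{Case $\min(x,y)<m_1$, say $y<m_1$.} Write
\[
\Phi(x+y+1)-\Phi(x+1)=\sum_{m=x+2}^{x+y+1}\bigl(\Phi(m)-\Phi(m-1)\bigr)\ge y\cdot \tfrac c2 d x^{d-1},
\]
which dominates $\Phi(y+1)\le C_2m_1^d$ when $x$ is large. Here $x$ is large because $x\ge q-m_1$. The symmetric case $x<m_1$ needs $b$ small, and $b>D$ only guarantees $x\ge D$, not that $x$ is large. In that case the same increment argument, applied with the roles swapped, still works because $q$ is large and $x\ge1$. For this reason I do not expect to need $D$ to be large in terms of $\Phi$.

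The main obstacle is this case bookkeeping near the boundary, so that the constants $m_1$ and $N_{\mathrm{peel}}$ depend only on $\Phi$, $D$ and $K$. It also requires that the monotonicity of $\Phi$ used in steps (ii) and (iii) only ever be invoked at arguments of size at least $m_0$. Where smaller arguments do occur, as with $\Phi(y+1)$ for small $y$, I would bound those values crudely by the finite constant $\max_{m\le m_1}\Phi(m)$.
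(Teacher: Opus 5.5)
Your proposal is correct and follows essentially the same route as the paper. You establish the one-step inequality $A_i+\Phi(q_{i+1})\le\Phi(q_i)$ for each step type, reducing type~(iii) to the merge inequality $\Phi(b)+\Phi(q-b+\varepsilon)\le\Phi(q)$. You split that inequality into a case with one bounded argument, handled by summing discrete increments, and a case with both arguments large, handled by the leading term. You then telescope, substituting the reserve hypothesis at $i=0$.

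The one slip is deriving the $\varepsilon=0$ case from the $\varepsilon=1$ case via $\Phi(q-b)\le\Phi(q-b+1)$. This is unjustified when $1\le q-b<m_0$, because an admissible $\Phi$ need not be monotone at small arguments. In that range, argue directly that $\Phi(q)-\Phi(b)\ge\tfrac c2\,d\,b^{d-1}\ge\Phi(q-b)$ for large $q$.
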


\begin{proof}
Let $c>0$ be the leading coefficient of $\Phi$.
For fixed $1\le r\le D$,
\[
\Phi(q)-\Phi(q-r)=rcd q^{d-1}+O_{\Phi,D}(q^{d-2}),
\]
so $\Phi(q)-\Phi(q-r)\ge K$ for all sufficiently large $q$, uniformly in $r$.  We also have
\begin{equation}\label{eq:merge-potential}
\Phi(x)+\Phi(y)\le\Phi(x+y-\varepsilon)
\end{equation}
when $x>D$, $\varepsilon\in\{0,1\}$, $y\ge\varepsilon$, and $x+y-\varepsilon$ is sufficiently large.  The cases $(y,\varepsilon)=(0,0)$ and $(1,1)$ are equalities.  If one variable is bounded and the other tends to infinity, the discrete derivative shows that the increase on the right has order $n^{d-1}$ and dominates the bounded term.  If both variables tend to infinity, the positive quantity $(x+y-\varepsilon)^d-x^d-y^d$ dominates the $O((x+y)^{d-1})$ errors.  This proves \eqref{eq:merge-potential}.

Choose $N_{\mathrm{peel}}$ so that these two estimates hold.  A step of type~(i) satisfies $A_i+\Phi(q_{i+1})\le\Phi(q_i)$ by assumption.  For type~(ii), this follows from the bounded-difference estimate.  For type~(iii), apply \eqref{eq:merge-potential} with $x=b_i$, $y=q_{i+1}$, and $\varepsilon=\varepsilon_i$.  Thus every step satisfies this one-step inequality, and summing it proves the first assertion.  For the reserve form, use the stronger inequality at the first step and the one-step inequalities afterward.
\end{proof}

\begin{lemma}\label{lem:terminal-structure}
For every fixed $a\ge2$ and $\kappa\in\{0,1\}$ there is $D_0=D_0(a)$ with the following property.  Start from a $2$-connected graph $G_0$ with $\circum(G_0)\le2a+\kappa$.  Repeatedly perform one of the following operations on the current induced graph $H$.
\begin{enumerate}[label=(\alph*),leftmargin=2.4em]
\item Delete a vertex of degree at most $a-1$ in $H$.
\item If no such vertex exists, choose a terminal block $B$ with $|B|\le D_0$.  If $B$ contains a cutvertex $c(B)$ of its component, delete $V(B)\setminus\{c(B)\}$.  Otherwise delete all of $V(B)$.
\end{enumerate}
When neither operation is possible, the remaining graph is empty or $2$-connected.
\end{lemma}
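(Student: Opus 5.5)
We choose $D_0$ so that every $2$-connected graph with circumference at most $2a+\kappa$ and minimum degree at least $a$ on more than $D_0$ vertices is ruled out, except for blocks that are too large to be peeled. The precise choice is $D_0=2a+1$, so that any block of order $D_0$ or less is small relative to the circumference. The argument works with the final graph $H$, in which neither operation applies. If $H$ is empty we are done, so assume $H\ne\varnothing$. Then $\delta(H)\ge a$, since otherwise operation (a) applies, and every terminal block of every component of $H$ has more than $D_0$ vertices, since otherwise operation (b) applies. We must show that $H$ is $2$-connected. Note that $H$ is a subgraph of $G_0$, so $\circum(H)\le 2a+\kappa$.

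The key step is that a terminal block can never be large. Let $B$ be a terminal block of a component of $H$. If $B$ is a single vertex, that vertex is isolated and has degree $0\le a-1$, which is impossible. If $B$ is a bridge, then it contains a vertex whose only neighbour lies in $B$, so that vertex has degree $1\le a-1$, again impossible because $a\ge2$. Hence $B$ is $2$-connected. Every vertex of $B$ other than $c(B)$ has all its $H$-neighbours inside $B$, so it has degree at least $a$ in $B$. We now apply \cref{lem:LiNing} to $B$ with $k=a$. If $m=|B|>D_0$ and we choose $x,y\in V(B)$ with $c(B)\in\{x,y\}$ whenever $c(B)$ exists, then all $m-2\ge(m-1)/2$ vertices of $V(B)\setminus\{x,y\}$ have degree at least $a$. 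When $a\ge2$ this gives an $x$--$y$ path of length at least $a$ between any two vertices, and in particular between adjacent $x,y$.

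Closing this path with the edge $xy$ only produces a cycle of length at least $a+1$. That is too weak, so instead we use Dirac's bound for $2$-connected graphs, $\circum(B)\ge\min\{m,2\delta'\}$, where $\delta'$ is the minimum degree. The cutvertex may have small degree in $B$, so we use the Dirac-type statement allowing one exceptional vertex. Alternatively, apply \cref{lem:LiNing} to a pair $x,y$ at maximum distance along a longest path and combine with an ear. The simplest route is the following. Since $B$ is $2$-connected and all vertices except possibly $c(B)$ have degree at least $a$, the classical theorem (Erd\H{o}s--Gallai, or Dirac applied after ignoring one vertex) gives a cycle of length at least $\min\{m,2a\}$. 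That still allows cycles of length $2a$, or $2a+1$ when $\kappa=1$, so it is not by itself a contradiction. I therefore expect this to be the main obstacle. Large terminal blocks of minimum degree $a$ with bounded circumference do exist, for example $H(m,2a)$ itself. Hence terminal blocks cannot be excluded by size alone, and the argument must be structural.

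So the real claim is about how many components and blocks remain. Suppose $H$ is not $2$-connected. Then some component has at least two terminal blocks, each $2$-connected with more than $D_0$ vertices, or $H$ has at least two components. The case of two components is not excluded by the hypotheses as stated. The statement starts from a $2$-connected $G_0$, though, and the operations only remove vertices, so I would track an invariant. In operation (b) the removed block is attached at a single cutvertex, and in operation (a) we delete one vertex. The invariant I would try is that the current graph consists of one nontrivial block together with trees of small pieces, that is, the pieces created are all eventually peeled. This needs an argument that a large terminal block $B_1$ and another large block $B_2$ cannot coexist. Using $2$-connectivity of $G_0$, \cref{lem:two-connections} gives two disjoint $B_1$--$B_2$ paths in $G_0$. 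Together with long paths inside $B_1$ and $B_2$, obtained from \cref{lem:LiNing} with length at least $a$ each, these yield a cycle of length at least $2a+2>2a+\kappa$ in $G_0$, which is a contradiction. Making this path-joining work, in particular choosing the endpoints in $B_1,B_2$ as the attachment points of the Menger paths, is where I expect the effort to go.
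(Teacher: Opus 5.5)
Your final paragraph reaches the paper's mechanism, but the proposal has a genuine gap: the case where the two terminal blocks share a cutvertex is never treated, and there a naive count gives a cycle of length only $2a+1$.

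\textbf{What works.} In the final graph $H$, take two terminal blocks $B_1,B_2$; each is $2$-connected with $|B_i|>D_0$. Join them in $G_0$ by paths from \cref{lem:two-connections}, and apply \cref{lem:LiNing} inside each $B_i$ between the attachment points. Outside the two roots, at most one vertex of $B_i$ (namely $c(B_i)$) can have degree below $a$ in $B_i$. So the requirement is $|B_i|-3\ge(|B_i|-1)/2$, which your choice $D_0=2a+1$ guarantees. This is essentially the paper's disjoint case.

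Three remarks on the rest:
\begin{itemize}
\item The argument uses only $2$-connectivity and circumference of $G_0$, not the deletion history. Two components are therefore harmless: terminal blocks in different components are disjoint and fall under the same case. The invariant you propose is unnecessary.
\item The first half of the proposal, which tries to exclude a single large terminal block via Dirac, is a dead end, and you rightly abandon it.
\item Both blocks must be terminal, not ``another large block''. A non-terminal block may contain many cutvertices of small degree inside it, and then the hypothesis of \cref{lem:LiNing} fails.
\end{itemize}

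\textbf{The gap.} Suppose $B_1$ and $B_2$ lie in the same component and meet in a cutvertex $u$, for instance a component consisting of two large blocks glued at one vertex. Then \cref{lem:two-connections}(i) does not apply, since the sets are not disjoint. What you can get is one path $P$ in $G_0-u$ from $x_1\in V(B_1)\setminus\{u\}$ to $x_2\in V(B_2)\setminus\{u\}$, with interior avoiding $B_1\cup B_2$. You also get \cref{lem:LiNing} paths $x_1$--$u$ in $B_1$ and $u$--$x_2$ in $B_2$, each of length at least $a$. The resulting cycle has length at least $2a+|P|$. If $|P|=1$ this is only $2a+1$, which contradicts nothing when $\kappa=1$.

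\textbf{The missing idea.} You need $|P|\ge2$, and this comes from the fact that the current graph is an \emph{induced} subgraph of $G_0$, since the operations only delete vertices. If $x_1x_2\in E(G_0)$, then $x_1x_2\in E(H)$. But then $B_1\cup B_2+x_1x_2$ would be $2$-connected, contradicting the maximality of the blocks. With $|P|\ge2$ the cycle has length at least $2a+2$, giving the contradiction.

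In the disjoint case no such argument is needed: the two Menger paths have length at least $1$ each, which already yields $a+a+1+1=2a+2$.
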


\begin{proof}
Choose $D_0\ge5$.  Let $\widehat G$ be a nonempty graph at which neither operation is possible, and suppose for a contradiction that $\widehat G$ is not $2$-connected.  Since operation~(a) is unavailable, $\delta(\widehat G)\ge a$.
Every graph that is disconnected or connected but not $2$-connected has at least two terminal blocks.  Indeed, if it has at least two components, choose one terminal block in each of two components.  If it is connected and not $2$-connected, the block tree has at least two leaves.  Let $B_1,B_2$ be two such terminal blocks.

Neither $B_i$ is a bridge or an isolated vertex.  Otherwise $B_i$ would contain a vertex that is not a cutvertex of its component and has degree at most one in $\widehat G$, contradicting $\delta(\widehat G)\ge a$.  Hence $B_1$ and $B_2$ are $2$-connected.  Operation~(b) is unavailable, so $|B_i|>D_0\ge5$ for $i=1,2$.
Distinct blocks are either disjoint or meet in one cutvertex.  We treat the two possibilities separately.

Suppose first that $V(B_1)\cap V(B_2)=\varnothing$.  Apply \cref{lem:two-connections}(i) in the original graph $G_0$ to the sets $V(B_1)$ and $V(B_2)$.  We obtain two vertex-disjoint paths $P_1,P_2$ joining $B_1$ to $B_2$.  Trim each path at its first and last vertices in $B_1\cup B_2$.  The two endpoints of $P_1,P_2$ in $B_i$ are distinct.  Denote them by $x_{1i},x_{2i}$.

Every vertex of $B_i$ that is not a cutvertex of its component has all its neighbours in $B_i$, so it has degree at least $a$ in $B_i$.  The terminal block $B_i$ contains at most one cutvertex.  After the two roots $x_{1i},x_{2i}$ have been excluded, at most one further vertex can fail to have degree at least $a$ in $B_i$.  Therefore at least
\[
|B_i|-3\ge\frac{|B_i|-1}{2}
\]
vertices of $B_i-\{x_{1i},x_{2i}\}$ have degree at least $a$ in $B_i$.  By \cref{lem:LiNing}, there is an $x_{1i}$--$x_{2i}$ path $Q_i\subseteq B_i$ of length at least $a$.  The paths $Q_1,Q_2,P_1,P_2$ are internally disjoint in the required way and form a cycle in $G_0$ of length at least $a+a+1+1=2a+2$,
contrary to $\circum(G_0)\le2a+\kappa$.

It remains to suppose that $V(B_1)\cap V(B_2)=\{u\}$.
Since $G_0$ is $2$-connected, $G_0-u$ is connected.  Hence it contains a path $P$ from a vertex
\[
x_1\in V(B_1)\setminus\{u\}
\]
to a vertex
\[
x_2\in V(B_2)\setminus\{u\}.
\]
Choose $P$ with its first and last contacts with $B_1\cup B_2$ as endpoints.  Its internal vertices then avoid $B_1\cup B_2$.  The path $P$ has length at least two.  Indeed, if $x_1x_2$ were an edge, then this edge together with an $x_1$--$u$ path in $B_1$ and a $u$--$x_2$ path in $B_2$ would lie on a cycle meeting both blocks, contradicting the maximality of the two distinct blocks.

As above, all but at most one vertex of $B_i$ outside the two prescribed roots $x_i,u$ have degree at least $a$ in $B_i$.  Thus \cref{lem:LiNing} gives an $x_i$--$u$ path $Q_i\subseteq B_i$ of length at least $a$.  The union
\[
Q_1\cup P\cup Q_2
\]
is a cycle of length at least
\[
a+2+a=2a+2,
\]
again a contradiction.  Therefore the terminal graph is empty or $2$-connected.
\end{proof}

\begin{lemma}\label{lem:two-connected-framework}
Fix integers $a\ge3$, $q\ge6$, $d\ge2$, and $\kappa\in\{0,1\}$.  Let $\Phi$ be admissible of degree $d$.
Assume that, for all sufficiently large graphs $G$ with $\circum(G)\le2a+\kappa$, the following hold:
\begin{enumerate}[label=(\roman*),leftmargin=2.4em]
\item if $v\in V(G)$ has degree at most $a-1$, then
\[
N(C_q,G)-N(C_q,G-v)\le\Phi(|V(G)|)-\Phi(|V(G)|-1)-\gamma |V(G)|^{d-1}
\]
for a fixed $\gamma>0$.
\item if $G$ is $2$-connected and $\delta(G)\ge a$, then $N(C_q,G)\le\Phi(|V(G)|)$.
\end{enumerate}
Then every sufficiently large $2$-connected graph $G$ with $\circum(G)\le2a+\kappa$ satisfies $N(C_q,G)\le\Phi(|V(G)|)$.
\end{lemma}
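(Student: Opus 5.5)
We run the peeling process of \cref{lem:terminal-structure} on $G_0=G$, with $n=|V(G)|$ large, and charge the removed cycles with the telescoping bound of \cref{lem:telescoping}. This produces induced graphs $G_0=H_0\supset H_1\supset\cdots\supset H_t$ with $q_i=|V(H_i)|$. Put $A_i=N(C_q,H_i)-N(C_q,H_{i+1})$; this counts the copies of $C_q$ in $H_i$ that meet the deleted set. Every $H_i$ is a subgraph of $G$, so $\circum(H_i)\le2a+\kappa$. By \cref{lem:terminal-structure} the final graph $H_t$ is empty or $2$-connected. If $H_t$ is $2$-connected, then $\delta(H_t)\ge a$, because operation~(a) is unavailable. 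We then have $N(C_q,G)=\sum_iA_i+N(C_q,H_t)$, and it is enough to show $\sum_iA_i+\Phi(q_t)\le\Phi(n)$ and $N(C_q,H_t)\le\Phi(q_t)$.

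We classify the steps as \cref{lem:telescoping} requires, stopping the analysis once $q_i$ drops below a threshold $N^*$ chosen below. A step of operation~(a) with $q_i$ large is of type~(i), by hypothesis~(i) applied to $H_i$; we drop the $-\gamma$ slack here. A step of operation~(b) removes at most $D_0$ vertices of a block of order at most $D_0$. Every copy of $C_q$ meeting the removed part lies inside that block, since a cycle through a vertex other than the cutvertex stays in the block. Hence $A_i\le K:=\binom{D_0}{q}q!$, and the step is of type~(ii) with $D=D_0$. Type~(iii) is not needed for this lemma. Once some $q_i<N^*$, the rest of the process removes a bounded graph. We treat all remaining steps, together with $H_t$, as one final step of type~(ii) with constant $\binom{N^*}{q}q!$ removed cycles. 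To keep this honest, we take $D$ in \cref{lem:telescoping} at least $N^*$. The order of choices matters: first fix $N^*$ so that hypotheses~(i) and~(ii) hold for graphs of order at least $N^*$; then fix $D=\max\{D_0,N^*\}$ and $K$, obtain $N_{\mathrm{peel}}$, and enlarge $N^*$ to be at least $N_{\mathrm{peel}}$. Enlarging $N^*$ keeps hypotheses~(i) and~(ii) valid, and it changes $D$ and $K$ only by constants. To avoid any circularity, we should apply the telescoping lemma with the cutoff fixed in advance and a bounded final jump.

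If the process ends while $q_t\ge N^*$, hypothesis~(ii) gives $N(C_q,H_t)\le\Phi(q_t)$. The first assertion of \cref{lem:telescoping} then yields $\sum_iA_i+\Phi(q_t)\le\Phi(n)$, and so $N(C_q,G)\le\Phi(n)$. If instead the order falls below $N^*$ at step $i_0$, we use $\Phi\ge0$ for the leftover. The chain up to $q_{i_0}$ satisfies the hypotheses, because all earlier $q_i$ are at least $N^*\ge N_{\mathrm{peel}}$. The final jump from $q_{i_0}$ to $0$ is a single step of size below $N^*\le D$ with bounded loss, and its only requirement is that $q_{i_0}\ge N_{\mathrm{peel}}$ before the jump. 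If $q_{i_0}$ is itself small, we merge it with the previous step instead.

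The main obstacle is exactly this bounded tail. Once the order is small we can no longer invoke hypotheses~(i) and~(ii), but the tail must still be absorbed with only a bounded loss. I expect the cleanest fix is to stop the peeling at the first index where $q_{i+1}<N^*$, which must occur from some $q_i\ge N^*$ with a drop of at most $D_0$. All copies of $C_q$ in $H_{i+1}$ number at most $\binom{N^*}{q}q!$, so we fold them into $A_i$. Step $i$ is then a type~(ii) step with $K$ enlarged and $q_{i+1}$ replaced by $0$, which is permitted because $D\ge N^*+D_0$. The $2$-connectedness of $G$ is used only inside \cref{lem:terminal-structure}.
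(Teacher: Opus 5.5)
Your classification of the steps is correct. For large order, operation~(a) is a type~(i) step by hypothesis~(i), and operation~(b) is a type~(ii) step with $D=D_0$ and a bounded $K$. If the peeling ends at a large graph, hypothesis~(ii) and the first assertion of \cref{lem:telescoping} finish the proof.

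\textbf{The gap is the bounded tail, and your proposed fix does not close it.} Folding $H_{i+1}$ into a single jump $q_i\to0$ requires \cref{lem:telescoping} to supply the one-step inequality $A_i+N(C_q,H_{i+1})\le\Phi(q_i)$, where $q_i$ can be as small as $N^*$. With your bound $N(C_q,H_{i+1})\le\binom{N^*}{q}q!$, this asks roughly for $(N^*)^q\le c(N^*)^d$. That is false for large $N^*$ whenever $d<q$, and every application has $d=\lfloor q/2\rfloor$. This is what the circularity you noticed amounts to: $N_{\mathrm{peel}}$ must grow with $D$ and $K$. No choice satisfies $D\ge N^*\ge N_{\mathrm{peel}}(D,K)$ with $K\ge\binom{N^*}{q}q!$, and enlarging $N^*$ only makes it worse. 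You also cannot bound $N(C_q,H_{i+1})$ by $\Phi$, because nothing is known about graphs below the threshold. Once you drop the $-\gamma|V(G)|^{d-1}$ term, the argument has no slack left to pay for the leftover graph.

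\textbf{The missing idea is a reserve taken at the first step, where the order is still the original $n$.}
\begin{enumerate}[label=(\arabic*),leftmargin=2.4em]
\item First fix $Q$ above the thresholds of hypotheses~(i) and~(ii) and of \cref{lem:telescoping} with $D=D_0$ and $K_0=D_0^q$. The value of $Q$ never feeds back into $D$ or $K_0$.
\item Only then take $n$ large in terms of $Q$. The first deletion from $G$ has slack of order $n^{d-1}$. For a vertex deletion this is $\gamma n^{d-1}$. For a block deletion removing $r\le D_0$ vertices it is $\Phi(n)-\Phi(n-r)-K_0=rcdn^{d-1}+O(n^{d-2})$.
\item Hence $A_0+\Phi(q_1)+Q^q\le\Phi(q_0)$. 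The reserve form of \cref{lem:telescoping} then gives $\sum_iA_i+\Phi(q_t)+Q^q\le\Phi(n)$.
\item Stop peeling as soon as the order falls below $Q$. The lemma only needs $q_i\ge N_{\mathrm{peel}}$ for $i<t$, so the final order $q_t$ may be small.
\item The leftover graph then has fewer than $Q^q$ copies of $C_q$. These are absorbed by the reserve, since $\Phi(q_t)\ge0$.
\end{enumerate}
This is the paper's argument. It needs neither an enlarged $D$ nor folding the tail into a step.
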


\begin{proof}
Let $D_0$ be given by \cref{lem:terminal-structure}, put $K_0=D_0^q$, and choose a common threshold $Q$ for the two assumptions and \cref{lem:telescoping}.  Increase $Q$ so that it exceeds $D_0$.  Since the increments of $\Phi$ have positive leading term, the reserve in~(i) and the bounded size of a terminal block allow us to choose $n_0\ge Q$ such that the first deletion from a graph of order at least $n_0$ satisfies
\[
A_0+\Phi(q_1)+Q^q\le\Phi(q_0),
\]
where $q_i$ is the current order and $A_i$ is the number of destroyed copies of $C_q$.

Apply the operations in \cref{lem:terminal-structure} while the current order is at least $Q$.  A vertex deletion satisfies the type~(i) inequality in \cref{lem:telescoping}.  If a terminal block $B$ is deleted, at most $D_0$ vertices are removed and every destroyed cycle lies in $B$, so $A_i\le |V(B)|^q\le K_0$.  Hence this is a type~(ii) step.  If at least one deletion is made, the reserve form of \cref{lem:telescoping} gives
\begin{equation}\label{eq:peeling-reserve-final}
\sum_{i=0}^{t-1}A_i+\Phi(q_t)+Q^q\le\Phi(q_0).
\end{equation}

If $q_t<Q$, then $N(C_q,G_t)<Q^q$, and \eqref{eq:peeling-reserve-final} proves the result.  Otherwise no operation is available.  By \cref{lem:terminal-structure}, $G_t$ is empty or $2$-connected, and in the latter case it has minimum degree at least $a$.  Assumption~(ii) gives $N(C_q,G_t)\le\Phi(q_t)$, and the same inequality is trivial when $G_t$ is empty.  This and \eqref{eq:peeling-reserve-final} finish the proof.  If no deletion is made, assumption~(ii) applies to the initial graph directly.
\end{proof}

\begin{lemma}\label{lem:components}
Let $\Phi$ be admissible of degree $d$.  For every fixed $N,K>0$ there is $n_0$ such that, whenever $n_1+\cdots+n_t=n$,
\[
\sum_{i:n_i\ge N}\Phi(n_i)+K\sum_{i:n_i<N}n_i\le\Phi(n)
\]
for all $n\ge n_0$.
\end{lemma}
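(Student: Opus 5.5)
The plan is to reduce everything to the superadditivity of $\Phi$ and absorb the small parts into the increments of $\Phi$. Put $m=\sum_{i:n_i<N}n_i$ and let $b_1,\dots,b_k$ be the orders $n_i\ge N$, so that $\sum_j b_j=n-m$. The argument has two steps. First we show $\sum_j\Phi(b_j)\le\Phi(n-m)$. Then we show $\Phi(n-m)+Km\le\Phi(n)$. Both steps need $n$ large, and they have to hold uniformly over all partitions, which is the only delicate point.

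\emph{Superadditivity.} We first fix a threshold $N_1\ge N$ such that $\Phi(x)+\Phi(y)\le\Phi(x+y)$ whenever $x,y\ge1$ and $x+y\ge N_1$; this is the case $\varepsilon=0$ of \eqref{eq:merge-potential} in the proof of \cref{lem:telescoping}. That argument has one problem: it uses $x>D$ only to avoid bounded $x$, but when both variables are bounded the sum $x+y$ is bounded too. The clean statement we want is therefore the following: there is $N_1$ with $\Phi(x)+\Phi(y)\le\Phi(x+y)$ for all $x,y\ge0$ with $x+y\ge N_1$.

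\emph{Proving the uniform superadditivity.} We write $\Phi(n)=cn^d+\rho(n)$ with $|\rho(n)|\le C'n^{d-1}$ for $n\ge1$, which is possible since $\Phi(0)=0$ and $\Phi\ge0$. If $\min(x,y)\ge T$ for a large constant $T$, we use $(x+y)^d-x^d-y^d\ge d\,xy\,(x+y)^{d-2}\ge dT(x+y)^{d-1}/2$. This beats the error $3C'(x+y)^{d-1}$ once $T>6C'/(cd)$. If instead $y<T$, we use the increment formula: $\Phi(x+y)-\Phi(x)\ge y\,(cd\,x^{d-1}/2)$ for $x$ large, and this exceeds $\Phi(y)\le\max_{t<T}\Phi(t)$ since $y\ge1$ (the case $y=0$ is trivial). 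Iterating over the parts then gives $\sum_j\Phi(b_j)\le\Phi(n-m)$, provided $n-m\ge N_1$. Every partial sum is then at least $b_1\ge N$, but we also need each partial sum to be at least $N_1$. To get this, we start by merging the largest part and take $N\ge N_1$ without loss. If $N<N_1$, we instead treat parts of order in $[N,N_1)$ like small parts: their $\Phi$-value is at most $\max_{t<N_1}\Phi(t)\le K'n_i$, so we enlarge $K$ to $\max(K,K')$ and move these parts into the small class.

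\emph{Absorbing the small parts.} It remains to show $\Phi(n-m)+Km\le\Phi(n)$ for $0\le m\le n$. For $n-m\ge N_1'$, the increments satisfy $\Phi(t)-\Phi(t-1)\ge cd\,t^{d-1}/2\ge K$, so summing them gives the claim. If $n-m<N_1'$, the left side is at most $\max_{t<N_1'}\Phi(t)+Kn=O(n)$, which is below $\Phi(n)=\Theta(n^d)$ for large $n$ because $d\ge2$. The same bounded-case reasoning handles $n-m<N_1$ in the previous step. The main obstacle is keeping all thresholds uniform over arbitrary partitions; as shown above, each case splits into ``large'' and ``bounded'' regimes controlled by constants depending only on $\Phi,N,K$.
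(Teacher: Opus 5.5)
Your argument is correct and follows the paper's route. Like the paper, you move the parts of order in $[N,N_1)$ into the linearly charged class with an enlarged constant, and you merge the remaining parts by superadditivity. After this reclassification, superadditivity is only ever needed for $x,y\ge N_1$, as in the paper, so your bounded-$y$ case is superfluous. You then absorb the linear remainder using eventually large increments plus an $O(n)$ versus $\Theta(n^d)$ comparison, which is a tidier version of the paper's regime-by-regime discussion.

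One slip needs fixing. The inequality $(x+y)^d-x^d-y^d\ge d\,xy\,(x+y)^{d-2}$ is false for $d\ge4$: at $d=4$, $x=y=1$ it says $14\ge16$. In the case $T\le x\le y$, use instead
$(x+y)^d-x^d-y^d\ge d\,x\,y^{d-1}-x^d\ge(d-1)\,x\,y^{d-1}\ge(d-1)2^{1-d}\,T\,(x+y)^{d-1}$,
and enlarge $T$ accordingly.
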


\begin{proof}
Let $c>0$ be the leading coefficient of $\Phi$.
Choose $N_*\ge N$ so large that $\Phi(x)+\Phi(y)\le\Phi(x+y)$ for all $x,y\ge N_*$.  This follows because the positive leading term $(x+y)^d-x^d-y^d$ dominates the errors.  Put
\[
K_*=\max\left\{K,\max_{N\le m<N_*}\frac{\Phi(m)}m\right\}.
\]
Repeatedly merge all parts of order at least $N_*$ and charge every other part linearly.  The left side is at most $\Phi(M)+K_*R$, where $M$ is the total order of the large parts, $R=n-M$, and the term $\Phi(M)$ is omitted if there is no large part.  If $M=0$, then $K_*n\le\Phi(n)$ for large $n$.  If $M>0$ and $R$ is bounded, the discrete derivative gives $\Phi(M+R)-\Phi(M)\ge K_*R$.  If $R\to\infty$ and $R=o(M)$, this difference is $(cd+o(1))RM^{d-1}$.  If $R$ is comparable with or larger than $M$, it has order $(M+R)^d$.  In every case it dominates $K_*R$ for sufficiently large $n$.
\end{proof}

\begin{lemma}\label{lem:blocks-concentration}
Let $\Phi$ be admissible of degree $d$.  For every fixed $N,K>0$ there is $m_0$ such that the following holds.  If
\[
b_1,\ldots,b_t\ge2,
\qquad
\sum_{i=1}^t(b_i-1)=m-1,
\]
then, for every $m\ge m_0$,
\begin{equation}\label{eq:block-concentration}
\sum_{i:b_i\ge N}\Phi(b_i)
+K\sum_{i:b_i<N}(b_i-1)
\le\Phi(m).
\end{equation}
\end{lemma}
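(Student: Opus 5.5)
We follow the pattern of \cref{lem:components}, but with the block-count normalization $\sum_i(b_i-1)=m-1$ in place of $\sum_i n_i=n$.

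\emph{Superadditivity with overlap.} First choose $N_*\ge\max\{N,2\}$ so large that
\[
\Phi(x)+\Phi(y)\le\Phi(x+y-1)\qquad\text{for all }x,y\ge N_*.
\]
This holds because $(x+y-1)^d-x^d-y^d$ is positive and of order $\min(x,y)\max(x,y)^{d-1}$ when $d\ge2$. That quantity dominates the $O((x+y)^{d-1})$ errors coming from the admissibility expansion, and it also dominates the shift by $1$, which only costs $O((x+y)^{d-1})$ via the discrete derivative. This is exactly the merging estimate \eqref{eq:merge-potential} from the proof of \cref{lem:telescoping} with $\varepsilon=1$. Iterating it, we may merge all ``large'' blocks, those with $b_i\ge N_*$, into a single value $\Phi(M)$, where
\[
M-1=\sum_{b_i\ge N_*}(b_i-1).
\]

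\emph{Charging the remaining blocks.} For blocks with $b_i<N_*$, put
\[
K_*=\max\Bigl\{K,\max_{N\le b<N_*}\frac{\Phi(b)}{b-1}\Bigr\}.
\]
This is finite because $b\ge2$. With this choice, every block with $b_i<N_*$ contributes at most $K_*(b_i-1)$. Setting $R=\sum_{b_i<N_*}(b_i-1)$, we get $M-1+R=m-1$, i.e.\ $M+R=m$. The left side of \eqref{eq:block-concentration} is therefore at most $\Phi(M)+K_*R$, with the term $\Phi(M)$ omitted if there are no large blocks.

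\emph{The final inequality.} It remains to show $\Phi(M)+K_*R\le\Phi(M+R)$ for $m$ large. This is the same case analysis as in \cref{lem:components}.
\begin{itemize}
\item If there is no large block, then $R=m-1$ and $K_*(m-1)\le\Phi(m)$ since $d\ge2$.
\item If $R$ is bounded, the discrete derivative gives $\Phi(M+R)-\Phi(M)\ge cdM^{d-1}R/2\ge K_*R$.
\item If $R\to\infty$ with $R=o(M)$, the difference is $(cd+o(1))RM^{d-1}$, which dominates $K_*R$.
\item If $R$ is comparable to or larger than $M$, the difference is of order $m^d$, which dominates $K_*R\le K_*m$.
\end{itemize}

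The only delicate point is the merging inequality with the $-1$ overlap, since a cutvertex is shared between blocks. It needs a uniform threshold $N_*$ valid for all pairs $x,y\ge N_*$, including the regime where one argument is near $N_*$ and the other is huge. I would handle this by splitting into $\min(x,y)\le T$, where the discrete derivative $\Phi(x+y-1)-\Phi(x)\ge c'(y-1)x^{d-1}$ beats $\Phi(y)=O(T^d)$ once $x$ is large, and $\min(x,y)>T$, where the leading-term gap $\ge c\min(x,y)\max(x,y)^{d-1}$ beats the errors. Choosing $T$ and then $N_*$ accordingly gives the uniform threshold.
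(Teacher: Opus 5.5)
Your proposal is correct and follows the paper's proof essentially step for step. It uses the same merge inequality $\Phi(x)+\Phi(y)\le\Phi(x+y-1)$ for $x,y\ge N_*$, the same constant $K_*$, the same reduction to $\Phi(M)+K_*R$ with $M+R=m$, and the same regime analysis, while your extra split at $\min(x,y)\le T$ usefully justifies the uniformity of the merge threshold, which the paper only asserts.
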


\begin{proof}
Let $c>0$ be the leading coefficient of $\Phi$.
Choose $N_*\ge N$ so large that
\begin{equation}\label{eq:block-merge}
\Phi(x)+\Phi(y)\le\Phi(x+y-1)
\end{equation}
for all $x,y\ge N_*$.  This follows from the positive leading coefficient.  If $x,y\ge N_*$, then
\[
(x+y-1)^d-x^d-y^d
\]
is positive, and by choosing $N_*$ sufficiently large its main term dominates the $O((x+y)^{d-1})$ error.

Put
\[
K_*=
\max\left\{
K,
\max_{N\le b<N_*}\frac{\Phi(b)}{b-1}
\right\}.
\]
All indices with $b_i<N_*$ may be charged by $K_*(b_i-1)$.  Repeated application of \eqref{eq:block-merge} to the remaining indices gives
\[
\sum_{i:b_i\ge N}\Phi(b_i)
+K\sum_{i:b_i<N}(b_i-1)
\le\Phi(M)+K_*R,
\]
where
\[
M=1+\sum_{i:b_i\ge N_*}(b_i-1),
\qquad
R=\sum_{i:b_i<N_*}(b_i-1),
\qquad
M+R=m.
\]
If there is no index with $b_i\ge N_*$, the right-hand side is $K_*(m-1)\le\Phi(m)$ for large $m$.  Otherwise $M\ge N_*$.  If $R$ is bounded, the discrete derivative of $\Phi$ gives
\[
\Phi(M+R)-\Phi(M)\ge K_*R
\]
for large $M$.  If $R\to\infty$ and $R=o(M)$, this difference is $(cd+o(1))RM^{d-1}$.  If $R$ is comparable with or larger than $M$, it is $\Omega((M+R)^d)$.  In every case it dominates $K_*R$ for sufficiently large $m$.  This proves \eqref{eq:block-concentration}.
\end{proof}

\begin{lemma}\label{lem:general-framework}
Fix $q\ge6$ and let $\Phi(n)=N(C_q,H(n,L))$.  Suppose that $\Phi$ is admissible of degree $d$.  If every sufficiently large $2$-connected graph $B$ with $\circum(B)\le L$ satisfies $N(C_q,B)\le\Phi(|V(B)|)$, then every sufficiently large graph $G$ with $\circum(G)\le L$ satisfies $N(C_q,G)\le\Phi(|V(G)|)$.
\end{lemma}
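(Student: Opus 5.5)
The plan is to decompose $G$ first into connected components and then each component into blocks, and to bound the cycle count blockwise. Every copy of $C_q$ lies inside a single block, since a cycle is $2$-connected. Hence
\[
N(C_q,G)=\sum_{\text{components }G_j}\ \sum_{\text{blocks }B\subseteq G_j}N(C_q,B).
\]
Every block $B$ of $G$ is a subgraph of $G$, so $\circum(B)\le L$. Moreover, every block with at least three vertices is $2$-connected. Bridges and isolated vertices contain no cycle and contribute zero.

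By hypothesis there is a threshold $N$ such that every $2$-connected $B$ with $|V(B)|\ge N$ and $\circum(B)\le L$ satisfies $N(C_q,B)\le\Phi(|V(B)|)$. A block with $b<N$ vertices satisfies the crude bound $N(C_q,B)\le b^q\le N^{q-1}b$. This is at most $2N^{q-1}(b-1)$ for $b\ge2$, so we put $K=2N^{q-1}$.

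The argument then has two steps.

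\textbf{Step 1 (one component).} Fix a connected component $G_j$ of order $m\ge2$ with blocks of orders $b_1,\dots,b_t\ge2$. The block–cutvertex tree gives $\sum_i(b_i-1)=m-1$. Hence
\[
N(C_q,G_j)\le\sum_{b_i\ge N}\Phi(b_i)+K\sum_{b_i<N}(b_i-1).
\]
When $m\ge m_0$, \cref{lem:blocks-concentration} bounds the right side by $\Phi(m)$. When $m<m_0$, we use instead $N(C_q,G_j)\le m^q\le K'm$ with $K'=m_0^{q-1}$.

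\textbf{Step 2 (whole graph).} Let the components have orders $n_1,\dots,n_t$ summing to $n$. Step 1 gives
\[
N(C_q,G)\le\sum_{n_i\ge m_0}\Phi(n_i)+K'\sum_{n_i<m_0}n_i .
\]
\Cref{lem:components}, applied with $N=m_0$ and $K=K'$, then gives $N(C_q,G)\le\Phi(n)$ for all $n\ge n_0$.

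The main point needing care is choosing the constants in the right order. The order is: first the threshold from the hypothesis; then $K$; then $m_0$ from \cref{lem:blocks-concentration}; then $K'$; then $n_0$ from \cref{lem:components}. With this order, none of the constants depends on $G$.

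The second point to check is that small blocks and components are charged linearly in $b_i-1$ and $n_i$ respectively, which is exactly the form those two lemmas accept. Isolated vertices are components with $n_i=1$ and are charged $0\le K'$, which is harmless. Beyond these bookkeeping points, the argument involves no real obstacle, since all the analytic work sits in the two concentration lemmas.
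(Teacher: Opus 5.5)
Your proposal is correct and follows the paper's own proof: within each component you charge large blocks by $\Phi(b_i)$ and small blocks by $K(b_i-1)$, then invoke \cref{lem:blocks-concentration}; across components you charge small components linearly and invoke \cref{lem:components}. Your explicit ordering of the constants (hypothesis threshold, $K$, $m_0$, $K'$, $n_0$) states carefully what the paper leaves implicit.
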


\begin{proof}
First let $C$ be a connected graph of order $m$, and let $B_1,\ldots,B_t$ be its blocks of order at least two, with $b_i=|V(B_i)|$.  Every cycle lies in a unique block and the block-tree identity gives $\sum_i(b_i-1)=m-1$.  Choose $N$ above the threshold in the hypothesis.  For $b_i\ge N$, we have $N(C_q,B_i)\le\Phi(b_i)$.  For $b_i<N$, choose a constant $K$ such that $N(C_q,B_i)\le K(b_i-1)$.  Hence
\[
N(C_q,C)\le\sum_{i:b_i\ge N}\Phi(b_i)+K\sum_{i:b_i<N}(b_i-1)\le\Phi(m)
\]
for all sufficiently large $m$, by \cref{lem:blocks-concentration}.

Now let $G_1,\ldots,G_t$ be the components of an arbitrary graph $G$, with orders $n_1,\ldots,n_t$.  Components above the preceding threshold contribute at most $\Phi(n_i)$, while smaller components contribute at most $K_1n_i$ for a fixed $K_1$.  Applying \cref{lem:components} gives $N(C_q,G)\le\Phi(|V(G)|)$ for all sufficiently large $|V(G)|$.
\end{proof}

\begin{proof}[Proof of \cref{thm:main}]
The cases $q=4,5$ are due to Zhu, Gy\H{o}ri, He, Lv, Salia and Xiao~\cite{ZhuEtAl2023}.  Let $q\ge6$ and write $L=2a+\kappa$, where $\kappa\in\{0,1\}$.

By \cref{lem:candidate-count,lem:even-candidate-count} and the extensions made above, $F_{a,s}^{(\kappa)}$ and $E_{a,s}^{(\kappa)}$ are admissible of degree $s$ whenever their parameters are in the stated ranges.

Suppose first that $q=2s$.  Then $a\ge s$, and put $\Phi=E_{a,s}^{(\kappa)}$.  The low-degree hypothesis of \cref{lem:two-connected-framework} is \cref{lem:even-low-degree-deletion}.  Its high-minimum-degree hypothesis is \cref{lem:even-terminal}.  Therefore \cref{lem:two-connected-framework} gives $N(C_{2s},G)\le\Phi(|V(G)|)$ for every sufficiently large two-connected graph $G$ with circumference at most $L$.  Applying \cref{lem:general-framework} then gives the same bound for every sufficiently large graph.  This argument also allows $L=q$.

Now let $q=2s+1$.  Since $L>q$, we have $a>s$, and put $\Phi=F_{a,s}^{(\kappa)}$.  In this case \cref{lem:odd-low-degree-deletion} and \cref{lem:odd-terminal} give the two hypotheses of \cref{lem:two-connected-framework}.  Hence the required bound first holds for every sufficiently large two-connected graph and then, by \cref{lem:general-framework}, for every sufficiently large graph.  In both parity cases, $H(n,L)$ has circumference at most $L$ and gives equality.
\end{proof}

\section{Forbidden paths}\label{sec:forbidden-paths}

We finish by converting the circumference result into the forbidden-path result.  The next lemma shows that every sufficiently large connected component of a $P_{p+1}$-free graph has circumference at most $p-1$.

\begin{lemma}\label{lem:path-to-cycle}
Let $p\ge3$ be an integer.  Let $G$ be a connected graph with no copy of $P_{p+1}$.  If $|V(G)|>p$, then $\circum(G)\le p-1$.
\end{lemma}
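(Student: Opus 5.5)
We argue by contradiction: suppose $G$ is connected, $P_{p+1}$-free, $|V(G)|>p$, and contains a cycle $C$ of length $\ell\ge p$. Since $G$ has no path on $p+1$ vertices, every cycle has at most $p$ vertices: a cycle on at least $p+1$ vertices contains a path on $p+1$ vertices. Hence $\ell=p$ exactly.

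The key step is to exploit connectivity together with $|V(G)|>p=|V(C)|$. There is a vertex $w\notin V(C)$, and since $G$ is connected there is an edge $wx$ with $w\notin V(C)$ and $x\in V(C)$. To find it, take a shortest path from $C$ to any vertex outside $C$; its first edge leaving $C$ has this form. Write $C=x\,c_1c_2\cdots c_{p-1}x$. Then $w\,x\,c_1\,c_2\cdots c_{p-1}$ is a path. Its vertices are distinct, since $w\notin V(C)$, and it has $p+1$ vertices. So $G$ contains a copy of $P_{p+1}$, a contradiction. Therefore $\circum(G)\le p-1$; if $G$ is acyclic the statement is vacuous under the usual convention.

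There is no real obstacle here; the only point needing care is the observation that a cycle of length exactly $p$ is excluded. This uses $|V(G)|>p$: it guarantees a vertex off the cycle, and connectivity then attaches it to the cycle. The hypothesis $p\ge3$ only ensures that cycles of length $p$ make sense.
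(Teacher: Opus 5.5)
Your proof is correct and follows the paper's argument: both exclude cycles of length at least $p+1$ directly, then use connectivity and $|V(G)|>p$ to attach an outside vertex to a $p$-cycle and extend along the cycle to obtain $P_{p+1}$. The paper connects the outside vertex to $C$ by a path of length $t$ and then takes $p-t$ cycle edges. Your choice of an outside vertex adjacent to $C$ is the case $t=1$, which is the simplest form of the same argument.
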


\begin{proof}
A cycle of length at least $p+1$ contains $P_{p+1}$.  Suppose that $G$ contains a cycle $C$ of length $p$.  Since $G$ is connected and has more than $p$ vertices, take a shortest path $x_0x_1\cdots x_t$ from a vertex outside $C$ to $C$, where $x_t\in V(C)$.  If $t\ge p$, then $x_0x_1\cdots x_p$ is already a copy of $P_{p+1}$.  Hence $1\le t\le p-1$.  By the choice of the path, $x_0,\ldots,x_{t-1}$ lie outside $C$.  Starting at $x_t$, take $p-t$ consecutive edges of $C$.  Since $p-t<p$, these cycle edges do not return to $x_t$.  Their union with $x_0x_1\cdots x_t$ is therefore a simple path with $p$ edges, a contradiction.
\end{proof}

\begin{proof}[Proof of \cref{thm:path-main}]
If $p\le q-1$, every copy of $C_q$ contains $P_{p+1}$, proving part~(i).

Let $p=q$.  A connected component containing a copy of $C_q$ has at least $q$ vertices.  If it had more than $q$ vertices, \cref{lem:path-to-cycle} would exclude that cycle.  Hence every such component has exactly $q$ vertices.  There are at most $\lfloor n/q\rfloor$ such components, and each contains at most the number of Hamilton cycles in $K_q$, namely $(q-1)!/2$.  This proves the upper bound in part~(ii), and disjoint copies of $K_q$ together with isolated vertices give equality.

For part~(iii), put $L=p-1$ and $a=\lfloor L/2\rfloor$.  The assumptions imply either $L>q$, or $q$ is even and $L=q$, so \cref{thm:main} applies.  Write $V(H(n,L))=A\sqcup I$, where $A$ is the clique.  If $L=2a$, then $I$ is independent.  Every path $Q$ therefore satisfies $|V(Q)\cap I|\le|V(Q)\cap A|+1$, and so $|V(Q)|\le2a+1=L+1=p$.  If $L=2a+1$, the graph $H(n,L)[I]$ has one edge.  Hence $|V(Q)\cap I|\le|V(Q)\cap A|+2$ and $|V(Q)|\le2a+2=L+1=p$.  Thus $H(n,L)$ is $P_{p+1}$-free and gives the lower bound.

For the upper bound, let $G_1,\ldots,G_t$ be the components of an $n$-vertex $P_{p+1}$-free graph $G$, and put $n_i=|V(G_i)|$ and $\Phi(m)=N(C_q,H(m,L))$.  Choose a constant $N>p$ above the threshold in \cref{thm:main}.  If $n_i\ge N$, then \cref{lem:path-to-cycle} gives $\circum(G_i)\le L$, and \cref{thm:main} gives $N(C_q,G_i)\le\Phi(n_i)$.  A component of order less than $N$ contains at most $K n_i$ copies of $C_q$, for some constant $K=K(q,N)$.  If $q=2s$, \cref{lem:even-candidate-count} gives the two asymptotic estimates for $\Phi$ required in \cref{lem:components}.  If $q=2s+1$, they are \eqref{eq:F-asymp} and \eqref{eq:F-diff}.  That lemma now gives $N(C_q,G)\le\Phi(n)$ for all sufficiently large $n$.
\end{proof}

\section*{Declaration of generative AI use}

During the preparation of this manuscript, the authors used OpenAI Codex to improve readability, to suggest shorter presentations of selected calculations, to assist in exploring the case $s=3$ of \cref{thm:main}, and to reorganize the proof of \cref{lem:kernel-normal-form}.  The authors independently checked and rewrote all AI-assisted material, including every statement, proof, and calculation, and take full responsibility for the mathematical content.

\end{document}